\documentclass[11pt]{article}
\usepackage{amsmath,enumerate}
\usepackage{amssymb,amsthm}
\usepackage{latexsym}
\usepackage[latin1]{inputenc}
\usepackage{epsfig}
\usepackage{color}
\setlength{\textwidth}{15cm}
\setlength{\textheight}{22cm}
\setlength{\hoffset}{-0.48in}\hoffset-0.48in
\setlength{\voffset}{-0.4in}\voffset-0.4in

\newcommand{\PP}{{\mathbb{P}}}

\newcommand{\indicator}[1]{1_{\left\{{#1}\right\}}}
\newcommand{\de}{\text{\normalfont{d}}}
\newcommand{\e}{\text{\normalfont{e}}}
\newcommand{\F}{\overline{F}}
\newcommand{\G}{\overline{G}}

\newcommand{\laplace}[1]{\hat L_{#1}}
\newcommand{\I} {{\iota}}

\newcommand{\mean}[1]{\mathbb E\left[ #1\right]}

\usepackage{srcltx}

\newcommand{\grossO} {\mathcal O}

\allowdisplaybreaks[1]
\newtheorem{theorem}{Theorem}[section]

\newtheorem{lemma}[theorem]{Lemma}

\newtheorem{assum}{Assumption}[section]

\newtheorem{rem}{Remark}[section]

 \usepackage{epsfig}
\usepackage{amssymb}
\usepackage{amsmath}

\newcommand{\Ree}{\text{\normalfont{Re}}}
\newcommand{\Ime}{\text{\normalfont{Im}}}

\title{\sc\large Second order corrections for the limits of normalized ruin times
in the presence of heavy tails}

\author{ By {\sc S\o ren Asmussen$~^a$ and Dominik Kortschak$~^{b}$\footnote{This work was partially supported by a grant from the Thiele Center at Aarhus University, by the
the MIRACCLE-GICC project and the Chaire d'excellence "Generali - Actuariat responsable: gestion des risques naturels et changements climatiques."}}}
% \date{\today}--
\date{\scriptsize\em
$~^a$ Department of Mathematical Sciences, Aarhus University, Ny Munkegade, DK-8000 Aarhus C, Denmark,\\[0.1cm]
$~^b$Universit\'e de Lyon, F-69622, Lyon, France; Universit\'e Lyon 1, Laboratoire SAF, EA 2429, Institut de Science Financi\`ere et d'Assurances, 50 Avenue Tony Garnier, F-69007 Lyon, France
}

\setlength{\parindent}{0.0em}

\begin{document}
\maketitle

\begin{abstract}
In this paper we consider a compound Poisson risk model with regularly varying claim sizes. For this model in \cite{AsmussenKlueppelberg:1996} an asymptotic formula for the finite time ruin probability is provided when the time is scaled by the mean excess function. In this paper we derive the rate of convergence for this finite time ruin probability when the claims have a finite 
 second moment.
\end{abstract}
\textbf {Keywords:} Second order asymptotic; Regular variation; Finite time ruin probability; Poisson process; Risk Process; Transient behavior; M/G/1 queue; Storage process; 
\section{Introduction}\label{S:Intr}
\setcounter{equation}{0}

In this paper, we consider the classical Cram\'er Lundberg risk process with (for convenience)
constant premium inflow $1$,
claims $X_1,X_2,\ldots$ which are iid random variables with distribution $F$ and arrive at
the epochs of  a Poisson process $N_t$ with parameter $\lambda$ and independent of the $X_i$. 
Denote with 
\[
 S_t =\sum_{i=1}^{N_t} X_i-t
\]
the claim surplus process at time $t$ and with
\[
 \tau_u=\inf\{t| u-S_t<0\}
\]
 the time of ruin with starting capital $u$. We are interested in the finite time ruin probability
\[
 \psi(u,t)=\PP(\tau_u<t).
\]
Denote with $\mu=\mean{X}$ and \[
             F_0(x)= \frac 1{\mu} \int_0^x \F(x) \de x 
            \]
the integrated tail distribution of $F$. We assume the usual net profit condition
$\rho=\lambda\mu<1$ ensuring  that the ruin in infinite time does not occur w.p.\ 1.
See for example \cite{AsmussenAlbrecher:2010}.

In \cite{AsmussenKlueppelberg:1996} (see also \cite[Section X.4]{AsmussenAlbrecher:2010}) it is shown that if $\F_0$ is subexponential 
and there exists a non-degenerate random variable $W$ and a function $e(u)$ such that 
\begin{equation}
 \lim_{u\to \infty} \frac{\F_0(u+ xe(u))}{\F_0(u)} =\PP(W>x),\label{mdacond}
\end{equation}
then 
\begin{equation}
 \psi(u,xe(u) )\sim \frac \rho {1-\rho}\F_0(u) \PP\left(\frac{W}{1-\rho} \le x\right)\label{firstorder}
\end{equation}
as $u\to\infty$ (see also \cite{AsmussenHojgaard:1996}, 
\cite{KKM:04} and the discussion in \cite[p.\ 318]{AsmussenAlbrecher:2010} for further work
in this direction). \\
In this paper we want to give asymptotic expressions for the error  in the approximation \eqref{firstorder}.
Condition \eqref{mdacond} (c.f. \cite{EMK:97}) and results on second order asymptotic approximations for compound sums (cf.\ \cite{AlbrecherHippKortschak:09} for a recent survey) imply that we have to expect three different cases: $\F_0$ is regularly varying and has finite mean, $\F_0$ is regularly varying and has infinite mean, $F_0$ is in the maximum domain of attraction of the Gumbel distribution. In this paper we will only consider the first case, where $W$ is regularly varying with finite mean
(see further Assumption \ref{mdacond} below). \\
It should be noted that the our results also have some relevance
for queueing and inventory theory. This is because of the relation between
the Cram\'er-Lundberg model and a dual M/G/1 queue defined by the same
arrival process and service times distributed as the $X_i$: $\psi(u,t)=
\PP(V_t>u)$ where $V_t$ is the workload process in an initially
empty queue (see \cite[ pp.\ 45--48]{AsmussenAlbrecher:2010}). This process is also frequently
used as a storage process model. \\
We start the paper in Section~\ref{S:Sub}  with a survey of recent result on second order
subexponentiasl asymptotics. Section~\ref{S:Prel} then contain the statement of our main result.
In addition we give the outline of the proof, which has many very technical steps (though often the crux is just careful Taylor expansions). This proof in turn is modeled after that of
\cite{AsmussenKlueppelberg:1996}, where the simple and explicit ladder structure of the
Cram\'er-Lundberg process plays a key role. We also give some discussion of the difficulties
in extending to more general models such as L\'evy processes or renewal models.\\
The proofs of the technical estimates omitted in Section~\ref{S:Prel} then occupy the rest
of the paper. A longer version of the paper with some more detail given is available
upon request from the authors.

\section{Subexponential distributions and second order properties}\label{S:Sub}
\setcounter{equation}{0}
In this paper we will assume that the distribution function $F$ of $X$ is regularly varying with index $\alpha$, i.e. 
\[
 \lim_{u\to\infty} \frac{\PP(X>xu)}{\PP(X>u)}= \lim_{u\to\infty} \frac{\F(xu)}{\F(X>u)}=x^{-\alpha}.
\]
For more information about regularly varying we refer to \cite{BGT:87}. Let $X_1,\ldots,X_n$ be iid copies of $X$ denote with $S_n=\sum_{i=1}^n X_i$ and $M_n=\max_{i=1,\ldots,n} X_i$. The regularly varying distributions are a subclass of the subexponential distributions defined through 
\begin{equation}
 \lim_{u\to\infty}\frac{\PP(S_n>u)}{\PP(X_1>u)}=\lim_{u\to\infty}\frac{\PP(M_n>u)}{\PP(X_1>u)}=n.\label{subexponentiality}
\end{equation}
A basic result on second order asymptotics for subexponential distributions concerns the rate of convergence in \eqref{subexponentiality}.\\

If $\mean{X}<\infty$ and $F$ has  a regularly varying density $f$, then it is shown in \cite{OmeyWillekens:1987} that
\begin{equation}
\PP(S_n>u)= n \F(u)+n (n-1) \mean{X_1} f(u) + o\left(f(u)\right).\label{secondorderOW}
\end{equation}
The regularly varying case with $\mean{X}=\infty$ is treated in \cite{OmeyWillekens:1986}.\\
In \cite{BaltrunasOmey:1998} the result \eqref{secondorderOW} is generalized to a wide class of subexponential distributions. Further it is pointed out in \cite{AlbrecherHippKortschak:09}, that a  Taylor expansion shows that \eqref{secondorderOW} is  equivalent to
\[
 \PP(S_n>u)= n \F\left(u-(n-1)\mean{X_1}\right) + o\left(f(u)\right)\,,
\]
which has the natural interpretation that the sum is large if one component is large and the others behave in a normal way.
 One should note that in the cited references  $n$ can be a (light tailed) random variable. Hence by the Pollaczeck-Khinchine formula these results directly translate to second order results for the infinite time ruin probability.\\
Higher order expansions are provided in \cite{BarbeMcCormick:2004} and \cite{BarbeMcCormickZhanga:2006}; for a recent survey of this topic, 
see \cite{AlbrecherHippKortschak:09}.\\ Extensions of these results are given in \cite{DegenLambriggerSegers:2010} where second order properties for the value-at-risk are provided.  \cite{Borovkov:2009} considered the absolute ruin probability in a model where the insurance company can borrow  money. In \cite{K:2011} dependent but tail independent regularly varying random variables are studied, and in \cite{BinChongfengWeidong:2011} second order properties for the value-at-risk,  when the risks are dependent according to an Archimedean copula, are provided.\\
Studies in the subexponential area often use the relation to extreme value theory,
in our case the fact that condition \eqref{mdacond} is equivalent to the condition that $F_0$ is in the maximum domain of attraction of the Fr\'echet extreme value distribution
(see e.g.\ \cite{EMK:97}). However, we will not use this connection.

\section{Preliminaries and main theorem}\label{S:Prel}
\setcounter{equation}{0}

To fix notation  we  present the idea of the proof of \eqref{firstorder} with the notation and the method given in  \cite{AsmussenAlbrecher:2010}. Therefore denote with 
\[
 \tau_+(0)=0,
% =\inf\{t:R_t <0,t>0\}, 
\quad \tau_+(i)=\inf\{t>\tau_+(i-1)\,:\,S_t>S_{\tau_+(i-1)})\},\quad i\ge 1
\]
the time of the $i$-th ladder step. 
 
 Further denote with $Y_i=S_{\tau_+(i)} -S_{\tau_+(i-1)}$ and $Z_i=S_{\tau_+(i-1)}-S_{\tau_+(i)-} $ the overshoot, resp.\ the capital before each ladder step. 
It is known that the $(Y_i,Z_i)$ form a sequence of iid random vectors with joint distribution given by $\PP(Y>y,Z>z)=\F_0(y+z)$. Denote with 
\[
 K(u)=\inf \{n:\tau_+(n)<\infty,Y_1+\cdots+ Y_n>u\}
\]
the number of ladder steps until the time of ruin and with $\PP^{(u,n)}=\PP(\cdot| \tau(u)<\infty,K(u)=n)$ \\
Denote with $R_t$ a stochastic process  independent of $S_t$ and $R_t\stackrel{d}{=}-S_t$. Let $w(x)=\inf\{t:R_t=x \}$ the first time that the process $R_t$ reaches level $x$.
Under the measure $\PP^{(u,n)}$ the distribution of $\tau (u)$ is the same as the one of $w(Z_1)+\cdots+w(Z_n)$ and $w(Z_1+\cdots+Z_n)$. Hence it follows that for  $Z_1,\ldots,Z_n|K(u)=n$ distributed according to $P^{(u,n)}$, the distribution of $\tau(u)$ is the same as the distribution of 
 $w(Z_1+\cdots+Z_{K(u)})$. So the method of proof for \eqref{firstorder} is first to find the distribution of $Z_1,\ldots,Z_n$ and then find the connection between $w(A)$ and $A$ for some 
random variable $A$.\\
We will use the same ideas to prove our main results. We will work under the following Assumption which will be assumed to hold throughout the paper.
\begin{assum}\label{assumption} Let $X,X_1,X_2,\ldots$ be a sequence of iid random variables with distribution function $F$ having a regularly varying tail with index $\alpha$, a regularly varying density $f$ and Laplace transform $\hat F(s)=\int_0^\infty \e^{-s x}f(x)\de x$. Assume that $\mean{X^2}<\infty$ and that there exists an $M>0$ with $s\hat F(s)<M$ for $\Ree(s)>0$ and $|s|<1$. % Further let  $N_u$  be a normal random variable with mean zero and variance $\sigma^2=e(u)^{-4}$
\end{assum}
It follows in particular that, taking $e(u)=u$, the r.v.\ $W$ in \eqref{mdacond} exists and
has tail $\PP(W>y)=(1+y)^{-\alpha+1}$.

\begin{theorem}\label{maintheo}
 Let Assumption \ref{assumption} be fulfilled and define  $e(u)=u$. Then
\begin{align*}
 \psi(u, xe(u)) &=     \frac{\rho\F_0(u+x(1-\rho)e(u))} {(1-\rho)} +\frac {3\mean{X^2}} \mu  \frac{\rho^2\F(u+x(1-\rho)e(u))} {(1-\rho)^2} \\&\quad-\psi(u)\frac{\lambda \mean{X_1^2}}{2e(u)(1-\rho)}\left(\frac{(\alpha-1) } {(1+x(1-\rho))^\alpha}  - \frac {\alpha(\alpha-1)x(1-\rho)} {(1+x (1- \rho))^{\alpha+1}}\right) \\&\quad +o\left( {\F(u)} \right).
\end{align*}
\end{theorem}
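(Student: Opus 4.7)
The plan is to follow the ladder-based approach used in \cite{AsmussenKlueppelberg:1996} to establish \eqref{firstorder}, but keep track of the next-order correction at every stage. Conditioning on the number of ladder steps until ruin gives
\[
 \psi(u,xe(u))=\sum_{n=1}^\infty \PP(\tau_u<\infty,K(u)=n)\,\PP^{(u,n)}\bigl(w(Z_1+\cdots+Z_n)<xe(u)\bigr),
\]
and the task splits into two essentially independent expansions, glued together by a Taylor expansion of $\F_0$ and $\F$ at $u+x(1-\rho)e(u)$.

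The first ingredient is a second-order description of the joint law of $K(u)$ and $A_n:=Z_1+\cdots+Z_n$ conditional on $\{\tau_u<\infty,K(u)=n\}$. I would use $\PP(Y>y,Z>z)=\F_0(y+z)$ together with the Omey--Willekens expansion \eqref{secondorderOW} applied to the ladder-height sum $Y_1+\cdots+Y_n$. The event $\{K(u)=n,\tau_u<\infty\}$ is dominated by the single-big-jump scenario in which exactly one ladder height exceeds $u$, so that under $\PP^{(u,n)}$ the remaining $n-1$ steps contribute a deterministic mean $(n-1)\mean{Y_1}=(n-1)\mean{X^2}/(2\mu)$ to $A_n$ up to fluctuations that are negligible at the order $\F(u)$. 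Weighting by $\rho^n$ and carrying out the geometric sums $\sum n\rho^n=\rho/(1-\rho)^2$ and $\sum n(n-1)\rho^n=2\rho^2/(1-\rho)^3$ is what produces the $\F_0$- and $\F$-prefactors in the first two terms of the theorem.

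The second ingredient is a second-order expansion of $\PP(w(A)<t)$ for $A$ large. Here $w$ is the first-passage process of $R_t\stackrel{d}{=}-S_t$, which, being the inverse of a process with stationary independent increments and positive drift $1-\rho$, is a subordinator. The assumption $s\hat F(s)<M$ for $\Ree(s)>0$, $|s|<1$ gives the analyticity of the associated Laplace exponent near the origin needed to invert the Laplace transform of $w(A)$: to leading order $w(A)\sim A/(1-\rho)$, and the next correction is a Gaussian fluctuation of width $\sqrt{A}$ whose variance is expressible through $\lambda\mean{X^2}$. Substituting $A=u+x(1-\rho)e(u)+O(1)$ and $t=xe(u)$ and pairing the correction with the regularly varying tail yields a contribution of order $\F(u)/e(u)$; differentiating $\F_0(u+x(1-\rho)e(u))$ in the two variables $u$ and $x$ produces exactly the combination $(\alpha-1)/(1+x(1-\rho))^\alpha-\alpha(\alpha-1)x(1-\rho)/(1+x(1-\rho))^{\alpha+1}$ appearing in the third term.

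The principal obstacle is uniform control of the error terms in $n$, so that the geometric series preserves the $o(\F(u))$ remainder. The delicate range is $n\gtrsim\log u$, where the single-big-jump approximation is weakest; there one must truncate at some $n\le n_u$ and handle the tail with a classical subexponential estimate. The remainder of the argument is lengthy but routine: careful Taylor expansions of $\F$ and $\F_0$ at $u+x(1-\rho)e(u)$ using the regular variation of $f$, verification that cross terms between the $O(1/u)$ correction to $w$ and the $O(1)$ overshoot fluctuations vanish at the order $o(\F(u))$, and a dominated-convergence argument to justify the interchange of limit and the sum over $n$.
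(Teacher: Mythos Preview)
Your overall strategy matches the paper's exactly: decompose via ladder steps and $K(u)$, obtain a second-order expansion for the conditional distribution of $\hat S_n=Z_1+\cdots+Z_n$ under $\PP^{(u,n)}$, sum over $n$, and then pass from $\hat S_n$ to $w(\hat S_n)$ via a CLT-type correction for the first-passage process. The paper organizes these two ingredients as Lemma~\ref{assZ} (the $Z$-asymptotics) and Lemma~\ref{lemma:abw} (the $w$--$W$ connection), with the uniformity in $n$ handled by the geometric bound in the second half of Lemma~\ref{assZ} rather than by truncation.

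However, your sketch of the first ingredient would give the wrong constant. You write that under $\PP^{(u,n)}$ ``the remaining $n-1$ steps contribute a deterministic mean $(n-1)\mean{Y_1}$ to $A_n$''; this accounts for only one of three shifts that enter the second-order term. The event $\{S_{n-1}\le u<S_n,\ \hat S_n>xe(u)\}$ is, on the dominant set $\{M_n=Y_n\}$, essentially $\{Y_n>u-S_{n-1},\ Z_n>xe(u)-\hat S_{n-1}\}$, and since $\PP(Y_n>y,Z_n>z)=\F_0(y+z)$ the argument of $\F_0$ is shifted by \emph{both} $S_{n-1}$ and $\hat S_{n-1}$, not just the latter. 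In addition, the cases $\{M_n=Y_i\}$ with $i<n$ are not negligible at this order: each contributes a term $\mu^{-1}\mean{Y_n}\F(u+xe(u))$, adding up to another $(n-1)\mean{Y}$. The correct coefficient in front of $\F(u+xe(u))$ is therefore $\mu^{-1}\mean{S_{n-1}+\hat S_{n-1}+(n-1)Y_n}=3(n-1)\mean{X^2}/(2\mu^{2})$, which is what produces the factor $3\mean{X^2}/\mu$ in the theorem after the geometric summation. Your description of the $w$-expansion is also considerably lighter than what the argument actually requires: the paper needs density and derivative regularity of $\hat S_n/u$ (Lemmas~\ref{lemma:regularityZ}--\ref{lemma:regularityZd}) to feed into Lemma~\ref{lemma:abw}, and the latter involves smoothing by an auxiliary Gaussian, a Gil--Pelaez inversion, and careful control of the symmetric part (Lemma~\ref{lemma:symmetry}) --- none of which follows from a simple Laplace-inversion heuristic.
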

\begin{rem}\rm Using $\PP(W>y)=(1+y)^{-\alpha+1}$  and simple calculus, it is easy to see
that the r.h.s.\ of \eqref{firstorder} and the first term in the expansion of $\psi(u, xe(u)) $
in Theorem \ref{maintheo} are both of order $c_1L(u)/u^{\alpha-1}$, with $L(u)$ the slowly
varying function common for $f,F,F_0$ and
$c_1=\rho/\bigl[(1-\rho)\mu(\alpha-1)\bigr]$. The two next terms are, up to constants, both of order
$L(u)/u^{\alpha}$.
\end{rem}
\begin{proof} We give the outline, 
with some lengthy and technical details being given later as Lemmas \ref{lemma:abw}--\ref{lemma:symmetry}
and \ref{assZ}--\ref{lemma:regularityZd}.
From \cite{AsmussenAlbrecher:2010} we get that for $S_n=Y_1+\cdots+Y_n$
\[
 \PP(K(u)=n)=\frac{\rho^n}{\psi(u)} \PP(S_n>u,S_{n-1}\le u).
\]
From Lemma \ref{assZ} we get that
\begin{align*}
&\PP^{(u,n)}(Z_1+\cdots+Z_n>xe(u)) \PP(K(u)=n)\\&= \frac{\rho^n\F_0(u+xe(u))} {\psi(u)} +\frac {3\mean{(n-1)X^2}} \mu  \frac{\rho^n\F(u+xe(u))} {\psi(u)}   +o\left(\frac {\F(u)}{\psi(u)} \right).
\end{align*}
     Summing over $n$ we get that
\begin{align*}
 &\PP^{(u)}(Z_1+\cdots+Z_{K(u)}>xe(u))\\&=    \frac{\rho\F_0(u+xe(u))} {(1-\rho)\psi(u)} +\frac {3\mean{X^2}} \mu  \frac{\rho^2\F(u+xe(u))} {(1-\rho)^2\psi(u)} +o\left(\frac {\F(u)}{\psi(u)} \right).
\end{align*}

From Lemmas \ref{lemma:regularityZ} and \ref{lemma:regularityZd} we get that
\[
W_u=\frac {Z_1+\cdots+Z_n}{x(1-\rho)e(u)}
\] 
fulfills the conditions of Lemma \ref{lemma:abw} and hence
\begin{align*}
 \frac{\psi(u, xe(u))}{\psi(u)} &=     \frac{\rho\F_0(u+x(1-\rho)e(u))} {(1-\rho)\psi(u)} +\frac {3} \mu  \frac{\rho^2\F(u+x(1-\rho)e(u))} {(1-\rho)^2\psi(u)} \\&\quad-\frac{\lambda \mean{X_1^2}}{2e(u)(1-\rho)}\left(\frac{1 }{x(1-\rho)} g_\infty(1)  + \frac 1{x (1- \rho)}g_\infty'(1)\right) \\&\quad +o\left(\frac 1{e(u)}\right) +o\left(\frac {\F(u)}{\psi(u)} \right)
\end{align*}
% XXXXXXXXXXXXXXNachrechnen
\end{proof}
\begin{rem}\rm
 The proof of Theorem \ref{maintheo} relies on two observations. First, we used  that the distribution of the sum of the surpluses before each ladder step  has a known distribution, which is related to the distribution that a random sum exceeds a given threshold and hence we can use methods developed for random sums to get second order properties. The second fact that we used is that we know the connection between the time of ruin and the sum of the surpluses. This connection 
 allows to involve the central limit theorem for compound Poisson sums
 (cf.\ Section~\ref{S:Not}) and hence higher order asymptotics can be found. These two properties of compound Poisson processes  are not  straightforward to generalize to more general risk models like renewal models since they heavily rely on the fact that the considered risk process is Markovian. Similarly, 
 the extension to general L\'evy processes meets the difficulty that
 the ladder structure here is more complicated.\\
 Another  interesting extension is to consider the case  where $F$ has finite mean
 but infinite variance. The difficulty here is that the CLT for Poisson sums has to be replaced
 with some sort of stable limit.
\end{rem}

\section{Some notation}\label{S:Not}
\setcounter{equation}{0}

% Let $R_t= t-\sum_{i=1}^{N_t} X_i$ where the $X_i$ are iid with  distribution function $F$ and common $N_t$ is a Homogenous Piosson procoss with indensity 
% $\lambda$ with 
The notation of this section will be used in the rest of the paper without further mentioning.
Recall from Asssumption~\ref{assumption} that $\hat F(s)=\mean{\e^{-sX}}$ is the Laplace transform of the claim size distribution $F$ and let $\kappa(s)=s+\lambda(\hat F(s)-1)$. Then we have
\[
 \mean{\e^{-s S_t}}= \e^{t\kappa(s)}.
\]
We get from  \cite[Lemma XI.3.1]{AsmussenAlbrecher:2010}
\[
 \mean{\e^{-s w(z)}}=\e^{-\kappa^{-1}(s) z}.
\]
For a function $g(x)$ we denote with $\laplace{g}(s)=\int_{0}^\infty \e^{-s x} g(x) \de x$ the Laplace transform. Note that
\[
 \laplace{F}(s)=\frac 1s \hat F(s).
\]
To study the distribution of $w(z)$, note that we can write
\[
 w(z)=z+\sum_{i=1}^{N(z)} E_i\,,
\]
 where  the $E_i$ are iid having the distribution of $E= w(X)$
 (the $E_i$ represent the excursions of $R_t$ away from its running maximum).  Also, as a sample path inspection immediately shows,  $E$ has  the busy period distribution in the usual dual M/G/1 queue (see \cite[pp.\ 45--48]{AsmussenAlbrecher:2010}). 
 Since the Laplace transform is $\hat F_E (s)=\hat F(\kappa^{-1}(s))$, it follows that  
 \begin{align*}
\mean{E}&=\mean{X}/(1-\lambda\mean{X} )=\mean{X}/(1-\rho)\\
\mean{E^2}&=\frac{\mean{X^2}}{(1-\rho)^2}\left(1+\frac {\lambda \mean{X}}{1-\rho}\right)=\frac{\mean{X^2}}{(1-\rho)^3}.                                                                                                                                                                                                                                      \end{align*}
Write $h(z)=w(z)-z(\lambda \mean{E}+1)=w(z)-z/(1-\rho)$ and $U(z)=h(z)/\sqrt{z}$.
By the central limit theorem, $U(z)\to N(0,\lambda\mean{E^2})$. 
Essentially, the claim size density $f$ and the density of $E$ have the same degree
of smoothness. Since we don't want to postulate smoothness conditions on $f$,
we will use smoothing with a normal random variable. Therefore denote with   $N_u$   a normal random variable with mean zero and variance $\sigma^2_u=e(u)^{-4}$.\\

In the proofs of this paper we will often rely on Taylor approximations with remainder 
terms. Therefore we will need to evaluate a function on an interim value which we will denote with $\xi_\Theta$ where $\Theta$ stands for the parameters on which $\xi$ dependents. With  a little abuse of notation we will also use this notation when we use Taylor expansions for a complex function (in this case one would have for the real and the imaginary part a different $\xi$) and when the derivative is not continuous.

\section{The connection between $w(W)$ and  $W$}\label{S:Conn}
\setcounter{equation}{0}

\begin{lemma}\label{lemma:abw}
 Let $W_u$ be a family of random variables with distribution function $G_u(w)$ with $\lim_{u\to\infty}\G_u(w)=\G_\infty(w)=(1+x(1-\rho)w)^{-\alpha+1}$. Further assume that $W_u$ has  a density $g_u(x)$ that is continuously differentiable and $\lim_{u\to\infty}
g_u(w)=g_{\infty}(w)$ as well as $\lim_{u\to\infty}g_u'(w)=g'_{\infty}(w)$. 
% Further let $N(t)$ be a homogeneous Poisson process $\lambda$ and $E,E_2,\ldots$ iid random variables with $\lambda \mean{E}=\lambda \rho/(1-\rho)$  and $\mean{E^2}<\infty$. Define 
% \[
% w(z)=z+\sum_{i=1}^{N(z)} E_i.                                                                                                                                                                                                                                                                                                                                                                                                                                                                                                                                                                                                                                                                                                     \]
Then 
\begin{align*}
&\PP(w( (1-\rho)x e(u) W_u)>xe(u))= \PP(W_u>1)\\&\quad-\frac{\lambda \mean{X_1^2}}{2e(u)(1-\rho)}\left(\frac{1  }{x(1-\rho)} g_\infty(1)  + \frac 1{x (1- \rho)}g_\infty'(1)\right) +o\left(\frac 1{e(u)}\right). 
\end{align*} 
\end{lemma}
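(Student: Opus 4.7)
The plan is to exploit the decomposition $w(z) = z/(1-\rho) + h(z)$ to reduce the lemma to a CLT/Edgeworth calculation for the compound Poisson $h$. Setting $V = (1-\rho)xe(u)W_u$, the event $\{w(V) > xe(u)\}$ is equivalent to $\{h(V) > xe(u)(1 - W_u)\}$, so, writing $F_u(w) = \PP(h((1-\rho)xe(u)w) > xe(u)(1 - w))$ and conditioning on $W_u$,
\[
\PP(w(V) > xe(u)) - \PP(W_u > 1) = \int_0^\infty \bigl[F_u(w) - \indicator{w > 1}\bigr] g_u(w)\,dw.
\]
Since $h(z)$ has mean $0$ and standard deviation $\sigma\sqrt{z}$ with $\sigma^2 = \lambda\mean{E^2}$, the integrand transitions from nearly $0$ to nearly $1$ as $w$ crosses $1$ on a window of width $\sim 1/\sqrt{e(u)}$ and is exponentially small outside. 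Standard compound-Poisson tail bounds, underwritten by the Laplace-transform estimate in Assumption~\ref{assumption}, show that the contribution of $|w - 1| > \delta$ for any fixed $\delta > 0$ is $o(1/e(u))$, so only the inner window matters.

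On this window I would change variable $s = (w - 1)\sqrt{e(u)}$ and Taylor-expand both factors. With $\gamma := \sqrt{x/(1-\rho)}/\sigma$,
\[
\frac{xe(u)(1 - w)}{\sigma\sqrt{(1-\rho)xe(u)w}}\bigg|_{w = 1 + s/\sqrt{e(u)}} = -\gamma s + \frac{\gamma s^2}{2\sqrt{e(u)}} + O(1/e(u)),
\]
and an Edgeworth-type expansion for $h$, obtained by inverting $\mean{\e^{-sh(z)}} = \exp[-z(\kappa^{-1}(s) - s/(1-\rho))]$ on a Bromwich contour after smoothing the claim density by $N_u$ (whose standard deviation $e(u)^{-2}$ is negligible at the scale $1/\sqrt{e(u)}$ but restores the analytic regularity not postulated on $f$), yields
\[
F_u(1 + s/\sqrt{e(u)}) = \Phi(\gamma s) - \phi(\gamma s)\frac{\gamma s^2}{2\sqrt{e(u)}} + \frac{c_3\,\phi(\gamma s)(\gamma^2 s^2 - 1)}{\sqrt{e(u)}} + O(1/e(u)),
\]
with $c_3$ the compound-Poisson skewness coefficient. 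The hypotheses on $g_u$ give $g_u(1 + s/\sqrt{e(u)}) = g_u(1) + g_u'(1) s/\sqrt{e(u)} + o(1/\sqrt{e(u)})$, where $g_u(1) \to g_\infty(1)$ and $g_u'(1) \to g_\infty'(1)$.

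Assembling and integrating against $dw = ds/\sqrt{e(u)}$, the leading piece $[\Phi(\gamma s) - \indicator{s > 0}]\,g_\infty(1)$ integrates to $0$ by the oddness of the bracket, and the Edgeworth piece also integrates to $0$ because $\int \phi(\gamma s)(\gamma^2 s^2 - 1)\,ds = 0$. The $1/e(u)$ term therefore comes from exactly two cross-contributions: (i) the quadratic argument correction paired with $g_\infty(1)$, producing $-g_\infty(1)\gamma/(2 e(u)) \cdot \int s^2\phi(\gamma s)\,ds = -g_\infty(1)/(2\gamma^2 e(u))$; and (ii) the $g_u'$ Taylor term paired with the zeroth-order $\Phi$-difference, producing $g_\infty'(1)/e(u) \cdot \int [\Phi(\gamma s) - \indicator{s > 0}]\,s\,ds = -g_\infty'(1)/(2\gamma^2 e(u))$. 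Using $\sigma^2 = \lambda\mean{X^2}/(1-\rho)^3$ rewrites $1/(2\gamma^2) = \lambda\mean{X^2}/[2x(1-\rho)^2]$, which after factoring out $\lambda\mean{X_1^2}/[2e(u)(1-\rho)]$ matches the stated expression. The main technical obstacle is securing the Edgeworth expansion for $h(z)$ uniformly in the localization window under the minimal smoothness available for $f$: this is precisely where the smoothing $N_u$ and the bound $|s\hat F(s)| < M$ from Assumption~\ref{assumption} play their roles, the latter justifying the contour shifts in the Laplace inversion and the former supplying enough smoothness for the density expansion to hold.
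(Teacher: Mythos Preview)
Your localization and the final arithmetic are correct: the substitution $s=(w-1)\sqrt{e(u)}$, the Taylor expansion of $g_u$, and the identification of the two surviving $1/e(u)$ contributions all match what the paper eventually obtains (via a reciprocal rather than linear reparametrization of $w$ near $1$). The essential divergence from the paper is in how you dispose of the asymmetry of $h(z)/\sqrt z$ about $0$, and this is where a genuine gap sits.

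You write an Edgeworth correction $c_3\,\phi(\gamma s)(\gamma^2 s^2-1)/\sqrt{e(u)}$ and observe that it integrates to zero. But under Assumption~\ref{assumption} one has only $\mean{X^2}<\infty$, so $\mean{E^3}$ --- and with it the skewness coefficient $c_3$ --- need not exist; for $2<\alpha\le 3$ it does not. The obstacle you flag at the end is therefore not a smoothness issue that $N_u$ can repair but a \emph{moment} issue: the one-term Edgeworth expansion with $o(1/\sqrt z)$ remainder simply fails in that range of $\alpha$, and the Laplace bound $|s\hat F(s)|<M$ does not manufacture a third moment. As written, your route is rigorous only for $\alpha>3$.

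The paper sidesteps Edgeworth entirely. After the same outer/inner split it (i) exploits the independent-increment structure of $h$ to compare $h$ at the running argument $x(w,u)e(u)$ with $h$ at the fixed argument $x(1-\rho)e(u)$, Taylor-expanding the smoothed \emph{density} rather than the distribution function (Lemmas~\ref{lemma:asymptoicp1}--\ref{devfromxzerom}), and (ii) proves directly via the Gil--Pelaez inversion formula and characteristic-function estimates that
\[
\int_0^{c\sqrt{xe(u)}}\Bigl[\PP\Bigl(N_u+\tfrac{h(xe(u))}{\sqrt{xe(u)}}\le -w\Bigr)-\PP\Bigl(N_u+\tfrac{h(xe(u))}{\sqrt{xe(u)}}> w\Bigr)\Bigr]\de w \;=\; o\bigl(1/\sqrt{e(u)}\bigr)
\]
(Lemma~\ref{lemma:symmetry}), whose proof explicitly handles both $\mean{E^3}<\infty$ and $\mean{E^3}=\infty$. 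This integrated-asymmetry statement is precisely your ``the Edgeworth piece integrates to zero'', but established under second moments only. In the paper the smoothing $N_u$ is used not to enable a distributional series expansion but merely to guarantee that $h(z)/\sqrt z$ has a bounded differentiable density with the correct pointwise limits, which is what the density-comparison lemmas need.
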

\begin{proof}
%  Denote with $h(z)=w(z)-z/(1-\rho)$. At first note that for $z \to \infty$ \[ \frac{h(z)}{\sqrt{z}}\]
% converges in distribution to a normal distribution with mean $0$ and variance $\lambda\mean{E^2}=\lambda \mean{X_1^2}/(1-\rho)^3$. 
% Further note that for every $z_1>0$ and  $Z_2>0$ the distribution of $h(z_1+z_2)$ is the same as the one of $h(z_1+z_2)$ 
First consider $W_u>1/(1-\epsilon)$. We get from Lemma \ref{lemma:bound} that there exists an $\delta>0$ with
\begin{align}
&\PP(W_u>1/(1-\epsilon))-\PP(w((1-\rho)xe(u) W_u)>xe(u),W_u>1/(1-\epsilon))\notag\\
&=\int_{1/(1-\epsilon)}^\infty  \PP\left(\frac{h((1-\rho)x e(u) w) }{(1-\rho)x e(u) w} \le \frac{1/w-1}{1-\rho} \right)  \de G_{u}(w)\notag\\
&\le \int_{1/(1-\epsilon)}^\infty\PP\left(\frac{h((1-\rho)x e(u) w) }{(1-\rho)x e(u) w} \le -\frac{\epsilon}{1-\rho} \right)  \de G_{u}(w)\notag\\&
 \le \int_{1/(1-\epsilon)}^\infty  \e^{-\delta e(u)w} \de G_{u}(w)\notag\\
&=o(e(u)^{-1}).\label{enequalitybelow}
\end{align}
For $W_u<1/(1+\epsilon)$ we get by \cite{KM:97}
\begin{align}
&\PP(w((1-\rho)xe(u) W_u)>xe(u),W_u<1/(1+\epsilon))\notag\\
&=\int_0^{1/(1+\epsilon)}  \PP\left(\frac{h((1-\rho)x e(u) w) }{(1-\rho)x e(u) w} > \frac{1/w-1}{1-\rho} \right)  \de G_{u}(w)\notag\\
% &\le \int_0^{1/(1+\epsilon)}\PP\left(\frac{h((1-\rho)x e(u) w) }{(1-\rho)x e(u) w} >\frac{\epsilon}{1-\rho} \right)  \de F_{W_u}(w)\\
&
 \sim \int_0^{1/(1+\epsilon)} \lambda (1-\rho)x e(u) w \PP\left(E>(1-w)x e(u) \right)  \de G_{u}(w) \notag\\
&\le \lambda (1-\rho)x e(u)\PP\left(E>\frac{\epsilon}{1+\epsilon}x e(u)\right) \int_0^{1/(1+\epsilon)} w\de G_{u}(w)\notag\\
&=o(e(u)^{-1})\label{enequalityabove}.
\end{align}
Finally we have to consider the case $1/(1+\epsilon)< W_u<1/(1-\epsilon)$.
\begin{align}
&\PP(w((1-\rho)xe(u) W_u)>xe(u),1/(1+\epsilon)<W_u<1/(1-\epsilon))\notag\\
&=\int_{1/(1+\epsilon)}^{1/(1-\epsilon)} \PP\left(\frac{h((1-\rho)x e(u) w) }{(1-\rho)x e(u) w} >\frac{1/w-1}{1-\rho} \right)  \de G_{u}(w)\notag\\
&=\PP(1\le W_u<1/(1-\epsilon))\notag\\
&\quad + \int_{1/(1+\epsilon)}^{1} \PP\left(\frac{h((1-\rho)x e(u) w) }{(1-\rho)x e(u) w} >\frac{1/w-1}{1-\rho} \right)  \de G_{u}(w)\label{int1}\\
&\quad -\int_{1}^{1/(1-\epsilon)} \PP\left(\frac{h((1-\rho)x e(u) w) }{(1-\rho)x e(u) w} \le \frac{1/w-1}{1-\rho} \right)  \de G_{u}(w)\label{int2}.
\end{align}
We start with \eqref{int1}. Denote with 
\begin{equation}\label{definitionx}x(w,u)=\frac{x(1-\rho)}{1+(1-\rho)\frac w{\sqrt{e(u)}}}.\end{equation}
For $N_u$ normally distributed with mean $0$ and variance $\sigma_u^2=(xe(u))^{-4}$ we get from Lemma \ref{lemma:smoothing}
\begin{align}
&\int_{1/(1+\epsilon)}^{1} \PP\left(\frac{h((1-\rho)x e(u) w) }{(1-\rho)x e(u) w} >\frac{1/w-1}{1-\rho} \right)  \de G_{u}(w)\notag\\
%     &=\int_0^{\epsilon/(1-\rho)} \frac{1-\rho}{((1-\rho)w+1)^2}\PP\left(\frac{h\left(\frac{(1-\rho)x e(u)} {1+(1-\rho)w}\right) }{\frac{(1-\rho)x e(u)} {1+(1-\rho)w}} >w\right) f_u(((1-\rho)w+1)^{-1})\de w\\
&=\frac{1-\rho}{\sqrt{e(u)}} \int_0^{\sqrt{e(u)}\epsilon/(1-\rho)}  \PP\left(\frac{h\left(x(w,u) e(u)\right) }{\sqrt{x(w,u) e(u)}} >w\sqrt{x(w,u)}\right) \frac{g_u\left(\frac 1{1+(1-\rho)\frac w{\sqrt{e(u)}}}\right)}{\left(1+(1-\rho) \frac w{\sqrt{e(u)}}\right)^2}\de w\notag\\
&=\frac{1-\rho}{\sqrt{e(u)}} \int_0^{\sqrt{e(u)}\epsilon/(1-\rho)}  \PP\left(N_u+\frac{h\left(x(w,u) e(u)\right) }{\sqrt{x(w,u) e(u)}} >w\sqrt{x(w,u) }\right)\notag\\
&\quad \times \frac{g_u\left(\frac 1{1+(1-\rho)\frac w{\sqrt{e(u)}}}\right)}{\left(1+(1-\rho) \frac w{\sqrt{e(u)}}\right)^2}\de w+o\left(\frac 1 {e(u)}\right)\notag\\
&=\frac{1-\rho}{\sqrt{e(u)}} \int_0^{\sqrt{e(u)}\epsilon/(1-\rho)}  \PP\left(N_u+\frac{h\left(x(w,u) e(u)\right) }{\sqrt{x(w,u) e(u)}} >w\sqrt{x(w,u) }\right) g_u\left(1\right)\de w\label{equationintegral1}\\
&\quad-\frac{(1-\rho)^2}{e(u)} \int_0^{\sqrt{e(u)}\epsilon/(1-\rho)}  w\PP\left(N_u+\frac{h\left(x(w,u) e(u)\right) }{\sqrt{x(w,u) e(u)}} >w\sqrt{x(w,u) }\right) \label{equationintegral2}\\&
\quad\times\left(\frac{g_u'\left(\frac1{1+\xi_{u,w}}\right)}{(1+\xi_{u,w})^4}+2\frac{g_u\left(\frac1{1+\xi_{u,w}}\right)}{(1+\xi_{u,w})^3}\right)\de w+o\left(\frac 1 {e(u)}\right)\notag.
\end{align}
 We have to evaluate the integrals in \eqref{equationintegral1} and \eqref{equationintegral2} so we split the integrals into $\int_0^M$ and $\int_M^{\sqrt{e(u)}\epsilon/(1-\rho)}$. By Lemma \ref{lemma:asymptoicp1} we get that
\begin{align*}
& \frac{1-\rho}{\sqrt{e(u)}} \int_0^{M}  \PP\left(N_u+\frac{h\left(x(w,u) e(u)\right) }{\sqrt{x(w,u) e(u)}} >w\sqrt{x(w,u) }\right) g_u\left(1\right)\de w\\
 &=\frac{1-\rho}{\sqrt{e(u)}} \int_0^{M}\PP\left(N_u+\frac{h\left(x(0,u) e(u)\right) }{\sqrt{x(0,u) e(u)}} >w\sqrt{x(0,u)}\right)g_u\left(1\right)\de w\\&
 \quad +\frac{(1-\rho)^2}{e(u)} \int_0^{M} w^2 \sqrt{x(1-\rho)} f_{0,\infty}(w\sqrt{x(1-\rho)}) g_u\left(1\right)\de w
\\&\quad+\frac{(1-\rho)^2}{2 e(u)} \lambda \mean{E^2 }  \int_0^{M}   
 wf'_{w,\infty}\left(w\sqrt{x(1-\rho)}\right)g_u\left(1\right)\de w\\&\quad+o(1/e(u)).
\end{align*}
Note that 
\begin{align*}
 &\lim_{M\to\infty} \int_0^{M} w^2 \sqrt{x(1-\rho)} f_{w,\infty}(w\sqrt{x(1-\rho)}) g_\infty\left(1\right)\de w=\frac{g_\infty(1)\lambda\mean{E^2}}{2x(1-\rho)}\\
&\lim_{M\to\infty} \int_0^{M}   
wf'_{w,\infty}\left(w\sqrt{x(1-\rho}\right)g_\infty\left(1\right)\de w=-\frac{g_\infty(1)}{2x(1-\rho)}.\\
&\lim_{M\to \infty}\lim_{u\to \infty} \int_0^{M}  w\PP\left(N_u+\frac{h\left(x(w,u) e(u)\right) }{\sqrt{x(w,u) e(u)}} >w\sqrt{x(w,u) }\right)\\&\quad\times\left(\frac{g_u'(1+\xi_{u,w})}{(1+\xi_{u,w})^2}+2\frac{g_u(1+\xi_{u,w})}{(1+\xi_{u,w})^3}\right)\de w=\frac{\lambda \mean{E^2}}{4x(1-\rho)}\left(g_\infty'(1)+2g_\infty(1)\right).\\
\end{align*} 
For the integral $\int_M^{\sqrt{e(u)}\epsilon/(1-\rho)}$ we get from Lemmas \ref{devnormalnew} and \ref{devfromxzero} that there exist a function $R(M,u) \lesssim \frac{C_M} {e(u)}$ and $C_M\to 0$ as $u\to \infty$ such that the sum of \eqref{equationintegral1} and \eqref{equationintegral2} is the same as
\[
\frac{1-\rho}{\sqrt{e(u)}} \int_M^{\sqrt{e(u)}\epsilon/(1-\rho)}  \PP\left(N_u+\frac{h\left(x(0,u) e(u)\right) }{\sqrt{x(0,u) e(u)}} >w\sqrt{x(0,u) }\right) g_\infty\left(1\right)\de w + R(M,u).
\]
With Lemmas \ref{lemma:asymptoicm1}, \ref{devnormalnew1} and \ref{devfromxzerom}, we can get analogously the asymptotic of \eqref{int2},
so that we are left with the integrals 
\begin{align*}
 &\int_0^{\sqrt{e(u)}\epsilon/(1-\rho)}  \PP\left(N_u+\frac{h\left(x(0,u) e(u)\right) }{\sqrt{x(0,u) e(u)}} >w\sqrt{x(0,u) }\right)\de w\\
&- \int_0^{\sqrt{e(u)}\epsilon/(1-\rho)}  \PP\left(N_u+\frac{h\left(x(0,u) e(u)\right) }{\sqrt{x(0,u) e(u)}} \le-w\sqrt{x(0,u) }\right)\de w.
\end{align*}
From Lemma \ref{lemma:symmetry} we get that the last equation  is asymptotically negligibility and hence the Lemma follows.

\end{proof}

\begin{lemma}\label{lemma:asymptoicp1}Under Assumption \ref{assumption} we get uniformly for $w<M$ that
\begin{align*}
 \PP\left(N_u+\frac{h\left(x(w,u) e(u)\right) }{\sqrt{x(w,u) e(u)}} >w\sqrt{x(w,u)}\right)
 &=\PP\left(N_u+\frac{h\left(x(0,u) e(u)\right) }{\sqrt{x(0,u) e(u)}} >w\sqrt{x(0,u)}\right)\\&
 \quad +w^2 \sqrt{x(1-\rho)}\frac {1-\rho}{\sqrt{e(u)}} f_{0,\infty}(w\sqrt{x(1-\rho)}) 
\\&\quad+ \frac{(1-\rho)w}{2\sqrt{e(u)}} 
 \lambda \mean{E^2 }f'_{w,\infty}\left(w\sqrt{x(1-\rho}\right)\\&\quad+o(1/\sqrt{e(u)}).
\end{align*}
 where $x(w,u)$ is defined by \eqref{definitionx} and $f_{w,\infty}$ is the density of a normal distribution with mean $0$ and variance $\lambda \mean{E^2}$.
\end{lemma}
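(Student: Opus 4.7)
The plan is to decompose the probability difference into a threshold shift and a time-parameter shift, handle each by Taylor expansion (with the needed regularity supplied by the $N_u$-smoothing), and exploit the regenerative structure of the hitting-time process $R_t$ for the more delicate piece. Write $y_u := w\sqrt{x(w,u)}$, $y_0 := w\sqrt{x(0,u)} = w\sqrt{x(1-\rho)}$, $z_u := x(w,u)e(u)$, $z_0 := x(0,u)e(u)$, and $\Phi(y;z) := \PP(N_u + h(z)/\sqrt z > y)$. I would split
\[
\Phi(y_u;z_u) - \Phi(y_0;z_0) \;=\; \bigl[\Phi(y_u;z_u) - \Phi(y_0;z_u)\bigr] \;+\; \bigl[\Phi(y_0;z_u) - \Phi(y_0;z_0)\bigr] \;=:\; D_1 + D_2
\]
and expand each bracket uniformly in $w \in [0,M]$ to order $1/\sqrt{e(u)}$.

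For $D_1$ the expansion $(1+a)^{-1/2} = 1 - a/2 + O(a^2)$ with $a=(1-\rho)w/\sqrt{e(u)}$ yields $y_u - y_0 = -(1-\rho)w^2\sqrt{x(1-\rho)}/(2\sqrt{e(u)}) + O(1/e(u))$. Applying the mean value theorem to $\Phi(\cdot;z_u)$ and using uniform convergence on compacts of the $N_u$-smoothed density $f_A(y;z) \to f_{0,\infty}(y)$ (justified by the CLT for the compound-Poisson representation $h(z) = \sum_{i=1}^{N(z)} E_i - \lambda z\mean{E}$ together with the Gaussian factor $\e^{-\sigma_u^2 t^2/2}$ on the characteristic-function side, where $\sigma_u^2 = e(u)^{-4}$) gives
\[
D_1 \;=\; \frac{(1-\rho)w^2\sqrt{x(1-\rho)}}{2\sqrt{e(u)}}\, f_{0,\infty}\!\bigl(w\sqrt{x(1-\rho)}\bigr) + o\!\left(1/\sqrt{e(u)}\right),
\]
which recovers exactly half of the first correction in the lemma.

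For $D_2$ the key ingredient is the regenerative decomposition: since $z_u<z_0$, the random variable $D:=h(z_0)-h(z_u)$ is independent of $h(z_u)$ with $D \stackrel{d}{=} h(z_0-z_u)$. Setting $\eta := \sqrt{z_u/z_0}$ and $D' := D/\sqrt{z_0}$ (independent of $A_{z_u} := N_u + h(z_u)/\sqrt{z_u}$), one obtains the identity $A_{z_0} = \eta A_{z_u} + (1-\eta)N_u + D'$, where $(1-\eta)N_u = O(e(u)^{-5/2})$ is negligible. Conditioning on $D'$ and expanding to second order about $y_0/\eta$ using $\mean{D'}=0$ and $\mathrm{Var}(D') = (z_0-z_u)\lambda\mean{E^2}/z_0 = (1-\rho)w\lambda\mean{E^2}/\sqrt{e(u)} + O(1/e(u))$, together with $y_0/\eta - y_0 = (1-\rho)w^2\sqrt{x(1-\rho)}/(2\sqrt{e(u)}) + O(1/e(u))$ and $f'_A(\cdot;z_u) \to f'_{w,\infty}$, gives
\[
D_2 \;=\; \frac{(1-\rho)w^2\sqrt{x(1-\rho)}}{2\sqrt{e(u)}}\, f_{0,\infty}(y_0) \;+\; \frac{(1-\rho)w\,\lambda\mean{E^2}}{2\sqrt{e(u)}}\, f'_{w,\infty}(y_0) \;+\; o\!\left(1/\sqrt{e(u)}\right).
\]
Adding $D_1+D_2$ produces the two stated correction terms.

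The main obstacle I expect is controlling the third-order remainder in the $D'$-Taylor expansion, since Assumption \ref{assumption} only guarantees $\mean{X^2}<\infty$ and so $\mean{E^3}$ may be infinite; this is handled by noting that $z_0 - z_u \to \infty$, so $D/\sqrt{z_0 - z_u}$ lies in the domain of the CLT, and a truncation of $E$ (using that $\mean{E^2 \mathbf{1}\{E>c\}} \to 0$ as $c\to\infty$) delivers $\mean{|D'|^3} = o(1/\sqrt{e(u)})$. The remaining technicalities, namely uniformity of all Taylor remainders in $w \in [0,M]$ (polynomial and hence routine) and the uniform convergence of $f_A$ and $f'_A$ to their Gaussian limits, are established by Fourier inversion of the identity $\mean{\e^{-sh(z)/\sqrt z}} = \e^{z\psi(s/\sqrt z)}$, $\psi(s) = s/(1-\rho) - \kappa^{-1}(s)$, once one observes that $z\psi(s/\sqrt z) = \lambda\mean{E^2}s^2/2 + O(s^3/\sqrt z)$ at low frequencies and that the smoothing factor $\e^{-\sigma_u^2 t^2/2}$ renders the inversion integral absolutely convergent at high frequencies.
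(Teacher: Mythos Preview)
Your approach is essentially the paper's: both rely on the regenerative decomposition $h(z_0)\stackrel{d}{=}h(z_u)+\tilde h(z_0-z_u)$ (with independent pieces) together with a Taylor expansion against the $N_u$-smoothed density, plus a separate threshold shift handled by the mean value theorem. The algebra checks out and $D_1+D_2$ indeed reproduces the two correction terms.

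The only substantive difference is organizational. The paper inserts the intermediate threshold $w\,x(w,u)/\sqrt{x(0,u)}=y_u\sqrt{z_u/z_0}$ rather than your $y_0$. With that choice the threshold shift carries the \emph{entire} $f_{0,\infty}$ contribution (coefficient $w^2(1-\rho)\sqrt{x(1-\rho)}/\sqrt{e(u)}$) and the regenerative piece yields only the $f'_{w,\infty}$ term; with your choice the $f_{0,\infty}$ term is split $50$--$50$ between $D_1$ and $D_2$. Both partitions are valid.

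Where the paper is cleaner is exactly the point you flag as ``the main obstacle''. Instead of expanding to third order in $D'$ and then having to argue that $\mean{|D'|^3}=o(1/\sqrt{e(u)})$ (which is delicate when $\mean{E^3}=\infty$ and, as stated, need not even be finite), the paper stops at second order with a Lagrange remainder:
\[
\PP\bigl(Z_u+\sqrt{\tfrac{(1-\rho)w}{\sqrt{e(u)}}}\hat Z_u>y_u\bigr)
=\PP(Z_u>y_u)-\frac{(1-\rho)w}{2\sqrt{e(u)}}\,\mean{\hat Z_u^2\,f'_{w,u}(y_u+\xi_{w,u})}.
\]
This requires only that $f'_{w,u}$ be uniformly bounded (supplied by the paper's Lemma~\ref{lemma:boundderivative}) and that $\mean{\hat Z_u^2}=\sigma_u^2+\lambda\mean{E^2}$ be bounded, after which a dominated-convergence argument gives the limit $\lambda\mean{E^2}\,f'_{w,\infty}(y_0)$. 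No third moments are needed, so your truncation detour can be dropped entirely if you adopt the Lagrange form of the remainder.
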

\begin{proof}  Denote with 
$f_{w,u}(x)$ the density of 
\[
 Z_u =N_u+\frac{h\left(x(w,u) e(u)\right) }{\sqrt{x(w,u) e(u)}}.
\]
and with $\hat f_{w,u}(x)$ the density of 
\[
\hat Z_u =\hat N_u+\frac{h\left((x(0,u)-x(w,u)) e(u)\right) }{\sqrt{(x(0,u)-x(w,u)) e(u)}}.
\]
where $\hat N_u$ is an independent copy of $N(u)$. $Z_u$ and $\hat Z_u$ are independent. 
Since $x(w,u)$ is monotonically decreasing in $w$, we get that
\begin{align*}
 &\PP\left(N_u+\frac{h\left(x(0,u) e(u)\right) }{\sqrt{x(0,u) e(u)}} >w\frac{x(w,u)}{\sqrt{x(0,u)}}\right)\\
&= \PP\left(N_u+\frac{h\left(x(w,u) e(u)\right) +h\left((x(0,u)-x(w,u)) e(u)\right)}{\sqrt{x(0,u) e(u)}} >w\frac{x(w,u)}{\sqrt{x(0,u)}}\right)\\
&= \PP\left(Z_u +\sqrt{\frac{(1-\rho)w}{\sqrt{e(u)}}} \hat Z_u>w\sqrt{x(w,u)}\right)\\
&=\PP\left(Z_u >w\sqrt{x(w,u)} \right)+ \sqrt{\frac{(1-\rho)w}{\sqrt{e(u)}}} \mean{\hat Z_u} f_{w,u}(w\sqrt{x(w,u)})\\
&\quad - \frac{(1-\rho)w}{2\sqrt{e(u)}}  \mean{\hat Z_u^2 f'_{w,u}(w\sqrt{x(w,u)}+\xi_{w,u})}\\
&=\PP\left(Z_u >w\sqrt{x(w,u)} \right)- \frac{(1-\rho)w}{2\sqrt{e(u)}} 
 \mean{\hat Z_u^2 }f'_{w,\infty}\left(w\sqrt{x(1-\rho}\right)+o\left(\frac 1{\sqrt{e(u)}}\right),
\end{align*}
here the last equality follows by bounded convergence.
Finally note that
\begin{align*}
&\PP\left(N_u+\frac{h\left(x(0,u) e(u)\right) }{\sqrt{x(0,u) e(u)}} >w\frac{x(w,u)}{\sqrt{x(0,u)}}\right)\\
 &=\PP\left(N_u+\frac{h\left(x(0,u) e(u)\right) }{\sqrt{x(0,u) e(u)}} >w\sqrt{x(0,u)}\right)\\&\quad 
 +\left(w\frac{x(0,u)-x(w,u)}{\sqrt{x(0,u)}}\right)f_{0,u}(w\sqrt{x(0,u)}+\xi_{u,w})\\
 &=\PP\left(N_u+\frac{h\left(x(0,u) e(u)\right) }{\sqrt{x(0,u) e(u)}} >w\sqrt{x(0,u)}\right)\\&\quad 
 +w^2 \sqrt{x(1-\rho)}\frac {1-\rho}{\sqrt{e(u)}} f_{0,\infty}(w\sqrt{x(0,u)}) +o\left(\frac 1{\sqrt{e(u)}}\right).
\end{align*}
%  the Lemma follows with Lemma \ref{lemma:boundderivative}.
 \end{proof}

%  page  

\begin{lemma}\label{devnormalnew}Under Assumption \ref{assumption} we get that for every $c>0$ uniformly for $M\le w<c\sqrt{e(u)}$ that

\begin{align*}
 &\left|\PP\left(N_u+\frac{h\left(x(w,u) e(u)\right) }{\sqrt{x(w,u) e(u)}} >w\sqrt{x(w,u)}\right)\right.\left.-\PP\left(N_u+\frac{h\left(x(0,u) e(u)\right) }{\sqrt{x(0,u) e(u)}} >w\frac{x(w,u)}{\sqrt{x(0,u)}}\right)\right|\\&\le \frac {C_1} { w^2\sqrt{ e(u)}} +  C_2w  \sqrt{e(u)}  \PP\left(E>\frac{\epsilon_1}2w \sqrt{e(u)}\sqrt{(1-\rho) x(w,u)}  \right)+o\left(\frac 1{e(u)}\right) 
% e^{-\delta w\sqrt{x(0,u)} }.
\end{align*}
where $x(w,u)$ is defined by \eqref{definitionx}.
\end{lemma}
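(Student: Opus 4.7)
The plan is to adapt the independent-increments decomposition from the proof of Lemma \ref{lemma:asymptoicp1} but to convert the pointwise Taylor expansion there into uniform bounds on the range $M \le w < c\sqrt{e(u)}$. Writing $z_w = x(w,u)e(u)$ and $z_0 = x(0,u)e(u)$, a direct computation yields $(z_0 - z_w)/z_w = (1-\rho)w/\sqrt{e(u)}$, and the independent increments of $w(\cdot)$ give $h(z_0) \stackrel{d}{=} h(z_w) + \tilde h(z_0 - z_w)$ with the two summands independent. Exactly as in Lemma \ref{lemma:asymptoicp1}, this recasts the second probability as $\PP(Z_u + B > t)$ with $t = w\sqrt{x(w,u)}$, $B = \sqrt{(1-\rho)w/\sqrt{e(u)}}\,\hat Z_u$ and $Z_u$ independent of $\hat Z_u$; the quantity to estimate becomes $|\PP(Z_u + B > t) - \PP(Z_u > t)|$.

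I would then split the analysis according to the size of $|B|$ relative to $\epsilon_1 t/2$. The large-$|B|$ contribution is bounded by $\PP(|B| > \epsilon_1 t/2)$, which after undoing the normalization and using that $\hat Z_u$ is dominated by $h(z_0 - z_w)/\sqrt{z_0 - z_w}$ (the Gaussian part $\hat N_u$ of variance $\sigma_u^2 = e(u)^{-4}$ contributing only a negligible tail), reduces to bounding $\PP(|h(z_0-z_w)| > \text{const}\cdot w x(w,u)\sqrt{e(u)})$. The single-big-jump principle for the compound Poisson sum $w(z) = z + \sum_{i=1}^{N(z)} E_i$ yields an upper bound of the form $\lambda(z_0-z_w)\,\PP(E > T_u) = \lambda(1-\rho)x(w,u)\,w\sqrt{e(u)}\,\PP(E > T_u)$, with $T_u$ of order $w\sqrt{e(u)}$; regular variation of $\PP(E > \cdot)$ then permits absorbing multiplicative constants in the argument to produce $\PP(E > (\epsilon_1/2) w\sqrt{e(u)}\sqrt{(1-\rho)x(w,u)})$, the uniformly bounded prefactor $\lambda(1-\rho)x(w,u)$ being absorbed into $C_2$.

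For the small-$|B|$ part I apply a Lagrange Taylor expansion
\[
\PP(Z_u > t-B) - \PP(Z_u > t) = B\, f_{Z_u}(t) - \tfrac12 B^2 f_{Z_u}'(\xi_B)
\]
and take expectation. Since $\mean{B} = 0$ (by symmetry of $\hat N_u$ and the centering $\mean{h(z_0-z_w)} = 0$), the only genuine linear contribution comes from the truncation region and has size $f_{Z_u}(t) \cdot |\mean{B; |B| > \epsilon_1 t/2}|$, which by regular variation of the tail of $|B|$ together with the bound $f_{Z_u}(t) = O(1/t^2)$ is smaller than the tail term already extracted by a factor $1/w$ and is therefore absorbed in the $o(1/e(u))$ remainder. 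The quadratic term is controlled by $\tfrac12\,\mean{B^2}\sup_\xi |f_{Z_u}'(\xi)|$, with the supremum taken over $\xi \in [t(1-\epsilon_1/2), t(1+\epsilon_1/2)]$; here $\mean{B^2} = (1-\rho)we(u)^{-1/2}\mean{\hat Z_u^2}$ is of order $w/\sqrt{e(u)}$ because $\mean{\hat Z_u^2} \le \sigma_u^2 + \lambda\mean{E^2}$ is uniformly bounded, and a pointwise estimate $|f_{Z_u}'(\xi)| = O(1/\xi^3) = O(1/w^3)$ on the relevant range produces exactly the term $C_1/(w^2\sqrt{e(u)})$.

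The main obstacle will be establishing the density-derivative bound $|f_{Z_u}'(\xi)| = O(1/w^3)$ uniformly in $u$ and in $w \in [M, c\sqrt{e(u)}]$. The natural inputs are that $U(z_w) = h(z_w)/\sqrt{z_w}$ has a regularly varying upper tail of index $-2$, obtained from the single-big-jump principle applied to $h(z_w)$ together with the finite variance $\lambda\mean{E^2}$, and the Laplace-transform bound $s\hat F(s) < M$ from Assumption \ref{assumption}; together these should permit passage from the tail estimate $\PP(Z_u > \xi) = O(1/\xi^2)$ to pointwise bounds on $f_{Z_u}$ and $f_{Z_u}'$, the differentiation being legitimised by the convolution with the narrow Gaussian $N_u$ of variance $\sigma_u^2 = e(u)^{-4}$.
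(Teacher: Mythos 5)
Your overall strategy is the one the paper actually uses: decompose $h(z_0)\stackrel{d}{=}h(z_w)+\tilde h(z_0-z_w)$, split on the size of the increment $B=\sqrt{(1-\rho)w/\sqrt{e(u)}}\,\hat Z_u$, control the large-increment part by a single-big-jump bound (Lemma \ref{lemma:bound}), and the small-increment part by a second-order Taylor expansion with Lagrange remainder, with the crucial input being the uniform density-derivative bound $f_{Z_u}'(\xi)=O(1/w^3)$ supplied by Lemma \ref{lemma:boundderivative}. The treatment of the quadratic term matches the paper exactly.

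However, your treatment of the linear term has a genuine gap. Since the truncation destroys the identity $\mean{B}=0$, you correctly reduce to $f_{Z_u}(t)\,\bigl|\mean{B;\,|B|>\epsilon_1 t/2}\bigr|$, but then you argue that because this is ``smaller than the tail term by a factor $1/w$'' it may be ``absorbed in the $o(1/e(u))$ remainder.'' That inference does not hold: being $w^{-1}$ times $C_2 w\sqrt{e(u)}\PP(E>c\,w\sqrt{e(u)})$ gives $C_2\sqrt{e(u)}\PP(E>c\,w\sqrt{e(u)})$, which for $w$ near the lower endpoint $M$ is of order $L(\sqrt{e(u)})\,e(u)^{(1-\alpha)/2}$, and this is $o(1/e(u))$ only when $\alpha>3$ --- not under the standing assumption $\mean{X^2}<\infty$, which permits any $\alpha>2$. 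The correct move (and the one in the paper) is to avoid regular variation here altogether and use a Chebyshev/Markov bound, $\bigl|\mean{B;\,|B|>\epsilon_1 t/2}\bigr|\le \mean{B^2}/(\epsilon_1 t/2)=O(1/\sqrt{e(u)})$, together with $f_{Z_u}(t)=O(1/t^2)=O(1/w^2)$ from Lemma \ref{lemma:boundderivative}. This places the linear contribution squarely in the $C_1/(w^2\sqrt{e(u)})$ bucket, not in the $o(1/e(u))$ remainder; the final bound is unchanged, but your stated justification is invalid and needs to be replaced.

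Two smaller points. First, for the large-$|B|$ part you invoke only the single-big-jump bound; $\hat Z_u$ is a two-sided (centred) quantity and the left tail of the compound Poisson sum requires a separate exponential (Chernoff) estimate --- this is precisely the $e^{-\delta\cdot}$ term in Lemma \ref{lemma:bound}, which you omit but which is needed before you can conclude. Second, the assertion that $U(z_w)=h(z_w)/\sqrt{z_w}$ ``has a regularly varying upper tail of index $-2$'' is incorrect --- the tail index is $-\alpha$ inherited from $E$; what you actually need, and what holds, is the non-asymptotic bound $\PP(Z_u>\xi)=O(1/\xi^2)$ from the finite second moment, together with the Laplace-transform argument of Lemma \ref{lemma:boundderivative} for the density and its derivative. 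Your identification of the necessary ingredients (the bound $s\hat F(s)<M$ and the narrow Gaussian smoothing) is right; the tail-index claim should simply be dropped.
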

\begin{proof}We use the notation of the proof of Lemma \ref{lemma:asymptoicp1}. \\ 
Choose an $0<\epsilon_1<1$. Since $x(w,u)$ is monotonically decreasing in $w$, we get that
\begin{align*}
 &\PP\left(N_u+\frac{h\left(x(0,u) e(u)\right) }{\sqrt{x(0,u) e(u)}} >w\frac{x(w,u)}{\sqrt{x(0,u)}}\right)\\
% &= \PP\left(N_u+\frac{h\left(x(w,u) e(u)\right) +h\left((x(0,u)-x(w,u)) e(u)\right)}{\sqrt{x(0,u) e(u)}} >w\frac{x(w,u)}{\sqrt{x(0,u)}}\right)\\
% &= \PP\left(Z_u +\sqrt{\frac{(1-\rho)w}{\sqrt{e(u)}}} \hat Z_u>w\sqrt{x(w,u)}\right)\\
&= \PP\left(Z_u +\sqrt{\frac{(1-\rho)w}{\sqrt{e(u)}}} \hat Z_u>w\sqrt{x(w,u)},|\hat Z_u|\le \epsilon_1 \sqrt {w\sqrt{e(u)}} \right)\\
&\quad +\PP\left(Z_u +\sqrt{\frac{(1-\rho)w}{\sqrt{e(u)}}} \hat Z_u>w\sqrt{x(w,u)},|\hat Z_u|> \epsilon_1 \sqrt {w\sqrt{e(u)}} \right).
% &=\PP\left(N_u+\frac{h\left(x(w,u) e(u)\right) }{\sqrt{x(0,u) e(u)}} >w\sqrt{x(w,u)}\right)\\
% &\quad + \mean{\frac{h\left((1-\rho)\frac w{\sqrt{e(u)}} x(w,u)e(u)\right) }{\sqrt{x(0,u) e(u)}}} f'_{w,u}(w\sqrt{x(w,u)})\\
% &\quad + \mean{\left(\frac{h\left((1-\rho)\frac w{\sqrt{e(u)}} x(w,u)e(u)\right) }{\sqrt{x(0,u) e(u)}}\right)^2} f'_{w,u}(\xi_{w,u})\\
% &=\PP\left(N_u+\frac{h\left(x(w,u) e(u)\right) }{\sqrt{x(0,u) e(u)}} >w\sqrt{x(w,u)}\right)\\
% &\quad + (1-\rho) \frac {w x(w,u)}{\sqrt{e(u)} x(0,u)}  \mean{\frac{h(z(w,u))^2}{z(w,u)} f'_{w,u}(\xi_{w,u})},
\end{align*}
% where $z(w,u)=(1-\rho)\frac w{\sqrt{e(u)}} x(w,u)e(u)$
Note that
\begin{align*}
 &\PP\left(Z_u +\sqrt{\frac{(1-\rho)w}{\sqrt{e(u)}}} \hat Z_u>w\sqrt{x(w,u)},|\hat Z_u|\le \epsilon_1 \sqrt {w\sqrt{e(u)}} \right)\\
&=\PP\left(Z_u >w\sqrt{x(w,u)},|\hat Z_u|\le \epsilon_1 \sqrt {w\sqrt{e(u)}} \right)\\
&+ \sqrt{\frac{(1-\rho)w}{\sqrt{e(u)}}} \mean{\hat Z_u\indicator{ |\hat Z_u|\le \epsilon_1 \sqrt {w\sqrt{e(u)}}}} f_{w,u}(w\sqrt{x(w,u)})\\
&\quad + \frac{(1-\rho)w}{\sqrt{e(u)}}  \mean{\hat Z_u^2 f'_{w,u}(w\sqrt{x(w,u)}+\xi_{w,u})\indicator{ |\hat Z_u|\le \epsilon_1 \sqrt {w\sqrt{e(u)}}}}.
\end{align*}
Since $\mean{\hat Z_u}=0$
we get that
\begin{align*}
\left|\mean{\hat Z_u\indicator{ |\hat Z_u|\le \epsilon_1 \sqrt {w\sqrt{e(u)}}}}\right|& =\left|\mean{\hat Z_u\indicator{ |\hat Z_u|> \epsilon_1 \sqrt {w\sqrt{e(u)}}}}\right|\\
&\le \frac{1}{ \epsilon_1\sqrt {w\sqrt{e(u)}}}\left|\mean{\hat Z_u^2\indicator{ |\hat Z_u|> \epsilon_1 \sqrt {w\sqrt{e(u)}}}}\right|.
\end{align*}
By Lemma \ref{lemma:boundderivative} $x^2 f_{w,u}(x)$ is  bounded and hence for some $c_1>0$
\[
 \sqrt{\frac{(1-\rho)w}{\sqrt{e(u)}}} \mean{\hat Z_u\indicator{ |\hat Z_u|\le \epsilon_1 \sqrt {w\sqrt{e(u)}}}} f_{w,u}(w\sqrt{x(w,u)}))\le c_1 \mean{\hat Z_u^2} \frac1{w^2 \sqrt{e(u)}}.
\]
Denote with
\begin{align*}
 a=\left(\frac{(1-\rho) x}{1+c (1-\rho)}-\epsilon_1\sqrt{1-\rho}\right)
\quad \text{and}\quad b=\left({(1-\rho) x}+\epsilon_1\sqrt{1-\rho}\right).
\end{align*}
We will assume that $\epsilon_1$ is chosen such that $a>0$.
From
\begin{align*}
\mean{\hat Z_u^2 f'_{w,u}(w\sqrt{x(w,u)})+\xi_{w,u})\indicator{| \hat Z_u|\le \epsilon_1 \sqrt {w\sqrt{e(u)}}}}
\le \mean{\hat Z_u^2 } \sup_{aw<x<b w } f'_{w,u}(x)
\end{align*}
and Lemma \ref{lemma:boundderivative} we get that  $\sup_{aw<x<b w } f'_{w,u}(x)\le c_2/ w^{3}$ and for some $c_3>0$
\[
 \frac{(1-\rho)w}{\sqrt{e(u)}}  \mean{\hat Z_u^2 f'_{w,u}(w\sqrt{x(w,u)})+\xi_{w,u})\indicator{ |\hat Z_u|\le \epsilon_1 \sqrt {w\sqrt{e(u)}}}}\le c_3 \mean{\hat Z_u^2} \frac1{w^2 \sqrt{e(u)}}.
\]

Further we have
with Lemma \ref{lemma:bound} and $\PP(|X+Y|>u)\le\PP(|X|>u/2)+\PP(|Y|>u/2)$ that for a standard normal distributed random variable $\mathcal N$
\begin{align*}
 & \PP\left(Z_u +\sqrt{\frac{(1-\rho)w}{\sqrt{e(u)}}} \hat Z_u>w\sqrt{x(w,u)},|\hat Z_u|> \epsilon_1 \sqrt {w\sqrt{e(u)}} \right)
 \\&\le\PP\left(|\hat Z_u|> \epsilon_1 \sqrt {w\sqrt{e(u)}} \right)\\&\le  C_2w(1-\rho) x(w,u) \sqrt{e(u)} \sqrt{x(w,u)} \PP\left(E>\frac{\epsilon_1}2w \sqrt{e(u)}\sqrt{(1-\rho) x(w,u)}  \right)\\&\quad + e^{-\delta\frac{\epsilon_1}4w \sqrt{e(u)}\sqrt{(1-\rho) x(w,u)} }+\PP\left(|\mathcal N|>\frac{\epsilon_1}2 \sqrt {w\sqrt{e(u)}}\right).
 \end{align*}

 \end{proof}
\begin{lemma}\label{devfromxzero}
Under Assumption \ref{assumption} we get that
\begin{align*}
 &\int_M^{c\sqrt{e(u)}}\PP\left(N_u+\frac{h\left(x(0,u) e(u)\right) }{\sqrt{x(0,u) e(u)}} >w\frac{x(w,u)}{\sqrt{x(0,u)}}\right)\de w\\&=\int_M^{c\sqrt{e(u)}}\PP\left(N_u+\frac{h\left(x(0,u) e(u)\right) }{\sqrt{x(0,u) e(u)}} >w \sqrt{x(0,u)}\right)\de w+R(u,M),
% e^{-\delta w\sqrt{x(0,u)} }.
\end{align*}
where \[R(u,M)\lesssim 
       \frac{C_M}{\sqrt{e(u)}}
      \]
and $C_M  \to 0$ as $M\to \infty$.
\end{lemma}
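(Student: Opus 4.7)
Write $A_u := N_u + h(x(0,u)e(u))/\sqrt{x(0,u)e(u)}$ and let $p_u$ denote its density, which exists and is smooth because of the convolution with the normal variable $N_u$. The integrand of the difference of the two integrals is
\[
\PP\bigl(A_u > w\sqrt{x(0,u)}\bigr)-\PP\bigl(A_u > w\,x(w,u)/\sqrt{x(0,u)}\bigr)=\int_{w\,x(w,u)/\sqrt{x(0,u)}}^{w\sqrt{x(0,u)}} p_u(t)\,\de t.
\]
Using \eqref{definitionx} with $x(0,u)=x(1-\rho)$, a direct computation gives that the length of this $t$-interval is
\[
w\bigl(\sqrt{x(0,u)}-x(w,u)/\sqrt{x(0,u)}\bigr)=\frac{\sqrt{x(1-\rho)}(1-\rho)\,w^{2}/\sqrt{e(u)}}{1+(1-\rho)w/\sqrt{e(u)}},
\]
which is of order $w^{2}/\sqrt{e(u)}$ when $w$ is moderate and of order $w$ when $w$ is of order $\sqrt{e(u)}$.

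The plan is then to split the outer integral at some intermediate level $M'$ which will be sent to $\infty$ after $u$. In the moderate regime $M\le w\le M'$, the density $p_u$ is uniformly bounded on compacts (which follows from the normal smoothing together with the bounds of Lemma \ref{lemma:boundderivative}), and by the local CLT underlying Lemma \ref{lemma:asymptoicp1} it behaves like the density of $N(0,\lambda\mean{E^2})$ near the relevant threshold $w\sqrt{x(1-\rho)}$. Multiplying the Gaussian-type upper bound on $p_u(w\sqrt{x(1-\rho)})$ by the length $O(w^2/\sqrt{e(u)})$ and integrating over $w\in[M,M']$ produces a contribution bounded by $C_M/\sqrt{e(u)}$, where $C_M\to 0$ as $M\to\infty$ because $w^2 e^{-c w^2}$ is integrable at infinity.

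In the tail regime $M'\le w\le c\sqrt{e(u)}$, the length of the inner $t$-interval is no longer small, so I bound the two probabilities separately by the heavy-tail surrogate used in the proof of Lemma \ref{devnormalnew}: up to Gaussian remainder, each is controlled by a constant multiple of $w\sqrt{e(u)}\,\PP\bigl(E > c_1 w\sqrt{e(u)}\sqrt{x(w,u)}\bigr)$. Since $\sqrt{x(w,u)}$ stays bounded below on this range and $\PP(E>\cdot)$ is regularly varying with finite variance, integrating in $w$ via Potter's bounds yields a contribution bounded by $C_{M'}/\sqrt{e(u)}$ with $C_{M'}\to 0$ as $M'\to\infty$. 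Combining the two regimes and sending first $M'\to\infty$ and then $M\to\infty$ gives the claimed estimate.

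The main obstacle is the tail regime $w\asymp\sqrt{e(u)}$: the simple density-times-length bound is useless there because the $t$-interval has length of order $w$ and $p_u$ is not small in a uniform sense. Overcoming this requires a large-deviation estimate for $A_u$ that couples the regularly varying tail of the busy period $E$ with the scaling factor $\sqrt{e(u)}$, so that after integration in $w$ one retains the $1/\sqrt{e(u)}$ prefactor with an arbitrarily small coefficient. Matching the two regimes cleanly, in particular checking that the bulk-to-tail split can be performed with $C_M$ and $C_{M'}$ both vanishing as $M,M'\to\infty$, is the technically delicate part.
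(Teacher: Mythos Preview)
Your bulk estimate on $[M,M']$ is fine, but the tail bound does not hold. The large-deviation estimate behind Lemma~\ref{lemma:bound} (which is where the term $w\sqrt{e(u)}\,\PP(E>c_1w\sqrt{e(u)})$ in Lemma~\ref{devnormalnew} ultimately comes from) requires the threshold to be at least of order~$t$, here $w\gtrsim\epsilon\sqrt{e(u)}$. For $w$ in the intermediate range $[M',\epsilon\sqrt{e(u)})$ with $M'$ fixed, a bound of the type
\[
\PP\Bigl(A_u>w\sqrt{x(0,u)}\Bigr)\ \le\ C\,w\sqrt{e(u)}\,\PP\bigl(E>c_1w\sqrt{e(u)}\bigr)
\]
is simply false: at $w=M'$ the left side tends to the positive constant $1-\Phi\bigl(M'\sqrt{x(1-\rho)/(\lambda\mean{E^2})}\bigr)$, while the right side tends to~$0$ because $E$ has finite variance. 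If instead you try to push the density-times-length argument into this range using only the uniform estimate $t^{2}p_u(t)\le C$ from Lemma~\ref{lemma:boundderivative}, the pointwise difference becomes $O(1/\sqrt{e(u)})$, and integrating over an interval of length $\asymp\sqrt{e(u)}$ leaves a constant rather than $C_M/\sqrt{e(u)}$. So the zone $M'\le w\ll\sqrt{e(u)}$ is genuinely not covered by either of your two regimes.

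The paper sidesteps this by a change of variables rather than a pointwise comparison. Substituting $v=w/\bigl(1+(1-\rho)w/\sqrt{e(u)}\bigr)$ turns the threshold $w\,x(w,u)/\sqrt{x(0,u)}$ into $v\sqrt{x(0,u)}$, at the price of the Jacobian $\bigl(1-(1-\rho)v/\sqrt{e(u)}\bigr)^{-2}$. Expanding this Jacobian reproduces the target integral plus remainders of the form
\[
\frac{1-\rho}{\sqrt{e(u)}}\int v\,\PP\bigl(A_u>v\sqrt{x(0,u)}\bigr)\,\de v
\]
over essentially $[M,\infty)$, for which only a uniform second-moment bound on $A_u$ is needed (and $\mean{A_u^2}\to\lambda\mean{E^2}$). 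The endpoint discrepancies are then handled separately, the lower one via the CLT and the upper one as in \eqref{enequalityabove}. In effect the substitution trades your length factor $w^{2}/\sqrt{e(u)}$ for a Jacobian factor $w/\sqrt{e(u)}$, and that saved power of~$w$ is exactly what allows a second-moment argument to close the estimate uniformly over the whole range.
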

\begin{proof} By substitution we get that
\begin{align}
 & \int_M^{c\sqrt{e(u)}}  \PP\left(N_u+\frac{h\left(x(0,u) e(u)\right) }{\sqrt{x(0,u) e(u)}} >w\frac{x(w,u)}{\sqrt{x(0,u)}}\right) \de w\notag\\
&\int_{\frac{M}{1+M\frac{1-\rho}{\sqrt{e(u)}}}}^{\frac{c\sqrt{e(u)}}{1+c(1-\rho)}} \frac{1}{1-w\frac{1-\rho}{\sqrt{e(u)} }} \PP\left(N_u+\frac{h\left(x(0,u) e(u)\right) }
{\sqrt{x(0,u) e(u)}} >w\sqrt{x(0,u)}\right) \de w\label{firstint}\\
&\quad+\frac {1-\rho}{\sqrt{e(u)}} \int_{\frac{M}{1+M\frac{1-\rho}{\sqrt{e(u)}}}}^{\frac{c\sqrt{e(u)}}{1+c(1-\rho)}}\frac{w}{\left(1-w\frac{1-\rho}{\sqrt{e(u)} }\right)^2} \PP\left(N_u+\frac{h\left(x(0,u) e(u)\right) }{\sqrt{x(0,u) e(u)}} >w\sqrt{x(0,u)}\right) \de w\label{secondint}
\end{align}
 \eqref{secondint} can be bounded by
\[
 \frac {1-\rho}{\sqrt{e(u)}} (1+c(1-\rho))^2 \int_{\frac{M}{1+M\frac{1-\rho}{\sqrt{e(u)}}}} ^{\infty} w \PP\left(N_u+\frac{h\left(x(0,u) e(u)\right) }{\sqrt{x(0,u) e(u)}} >w\sqrt{x(0,u)}\right) \de w\sim \frac {c_M}{\sqrt{e(u)}}. 
\]
where $c_M\to0$ as $M\to \infty$. For \eqref{firstint} we have
\begin{align}
& \int_{\frac{M}{1+M\frac{1-\rho}{\sqrt{e(u)}}}}^{\frac{c\sqrt{e(u)}}{1+c(1-\rho)}} \frac{1}{1-w\frac{1-\rho}{\sqrt{e(u)} }} \PP\left(N_u+\frac{h\left(x(0,u) e(u)\right) }
{\sqrt{x(0,u) e(u)}} >w\sqrt{x(0,u)}\right) \de w \notag\\
&= \int_{\frac{M}{1+M\frac{1-\rho}{\sqrt{e(u)}}}}^{\frac{c\sqrt{e(u)}}{1+c(1-\rho)}} \frac{1-\rho}{\sqrt{e(u)} } \frac{w}{1-w\frac{1-\rho}{\sqrt{e(u)} }} \PP\left(N_u+\frac{h\left(x(0,u) e(u)\right) }
{\sqrt{x(0,u) e(u)}} >w\sqrt{x(0,u)}\right) \de w\label{firstint1}\\
&\quad+\int_{\frac{M}{1+M\frac{1-\rho}{\sqrt{e(u)}}}}^{\frac{c\sqrt{e(u)}}{1+c(1-\rho)}}  \PP\left(N_u+\frac{h\left(x(0,u) e(u)\right) }
{\sqrt{x(0,u) e(u)}} >w\sqrt{x(0,u)}\right) \de w\label{secondint1}.
\end{align}
Here \eqref{firstint1} can be bounded similar to \eqref{secondint}. The integral \eqref{secondint1} split into
\begin{align*}
&
%  \int_{\frac{M}{1+M\frac{1-\rho}{\sqrt{e(u)}}}}^{\frac{c\sqrt{e(u)}}{1+c(1-\rho)}}  \PP\left(N_u+\frac{h\left(x(0,u) e(u)\right) }
% {\sqrt{x(0,u) e(u)}} >w\sqrt{x(0,u)}\right) \de w\\
% &=
\int_{M}^{{c\sqrt{e(u)}}} +\int_{\frac{M}{1+M\frac{1-\rho}{\sqrt{e(u)}}}}^{M} -\int_{\frac{c\sqrt{e(u)}}{1+c(1-\rho)}}^{{c\sqrt{e(u)}}} \PP\left(N_u+\frac{h\left(x(0,u) e(u)\right) }
{\sqrt{x(0,u) e(u)}} >w\sqrt{x(0,u)}\right) \de w.
\end{align*}
where the last integral can be bounded as in \eqref{enequalityabove}. Further
\begin{align*}
& \int_{\frac{M}{1+M\frac{1-\rho}{\sqrt{e(u)}}}}^{M}\PP\left(N_u+\frac{h\left(x(0,u) e(u)\right) }
{\sqrt{x(0,u) e(u)}} >w\sqrt{x(0,u)}\right) \de w \\&\le
\frac{1} {\sqrt{e(u)}}\frac{M^2 (1-\rho)}{1+M\frac{1-\rho}{\sqrt{e(u)}}} \PP\left(N_u+\frac{h\left(x(0,u) e(u)\right) }
{\sqrt{x(0,u) e(u)}} > \frac{M\sqrt{x(0,u)}}{1+M\frac{1-\rho}{\sqrt{e(u)}}}\right)\\
&\sim \frac{M^2 (1-\rho)} {\sqrt{e(u)}} \left(1-\Phi\left(\frac{ M\sqrt{x(1-\rho)}}{\sqrt{\lambda\mean{E^2}}}\right)\right).
\end{align*}
Hence the Lemma follows.

\end{proof}

We now provide the similar Lemmas for \eqref{int2}. We will skip the proofs, since apart from some obvious modifications  they are similar to the proofs of Lemmas \ref{lemma:asymptoicm1}, \ref{devnormalnew1} and \ref{devfromxzerom}.

\begin{lemma}\label{lemma:asymptoicm1}Under Assumption \ref{assumption} we get that for every $c>0$ uniformly for $w<M$ that
\begin{align*}
& \PP\left(N_u+\frac{h\left(x(-w,u) e(u)\right) }{\sqrt{x(w,u) e(u)}} \le -w\sqrt{x(-w,u)}\right)\\
 &=\PP\left(N_u+\frac{h\left(x(0,u) e(u)\right) }{\sqrt{x(0,u) e(u)}} \le -w\sqrt{x(0,u)}\right)\\&  - w^2\sqrt{x(1-\rho)} \frac{ (1-\rho)}{\sqrt{e(u)}}f_{0,\infty}(-w\sqrt{x(1-\rho)})
\\&+ \frac{ w(1-\rho)}{2\sqrt{e(u)}}
 \mean{\hat Z_u^2 }f'_{0,\infty}\left( -w \sqrt{x(1-\rho)}\right)+o\left(\frac {1}{\sqrt{e(u)}}\right).
\end{align*}
 where $x(w,u)$ is defined by \eqref{definitionx} and $f_{\infty}$ is the density of a normal distribution with mean $0$ and variance $\lambda \mean{E^2}$.
\end{lemma}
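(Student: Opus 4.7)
The plan is to parallel the proof of Lemma \ref{lemma:asymptoicp1} with three sign-level modifications: replace $w$ by $-w$ in the first argument of $x(\cdot,u)$; study the lower-tail event $\{\,\cdot\le -w\sqrt{x(-w,u)}\,\}$; and note that since $x(\cdot,u)$ is monotonically decreasing one has $x(-w,u)>x(0,u)$, so the independent-increment splitting of $h$ now runs from $x(0,u)$ up to $x(-w,u)$ rather than from $x(w,u)$ up to $x(0,u)$.

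Concretely, I would first use the independent-increments property of the first-passage-time process $w(\cdot)$ to write
\[
h(x(-w,u)e(u)) \stackrel{d}{=} h(x(0,u)e(u)) + h'\bigl((x(-w,u)-x(0,u))e(u)\bigr),
\]
with the two summands independent, and split the Gaussian smoother in the same way, so that after the appropriate rescaling one obtains a decomposition $Z_u^{(-w)} = Z_u^{(0)} + c_{w,u}\hat Z_u$, where $Z_u^{(0)} = N_u^{(1)} + h(x(0,u)e(u))/\sqrt{x(0,u)e(u)}$, $\hat Z_u$ is independent and centred with $\mean{\hat Z_u^2}\to\lambda\mean{E^2}$, and $c_{w,u}^2\sim(1-\rho)w/\sqrt{e(u)}$. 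Conditioning on $\hat Z_u$ and Taylor-expanding the law of $Z_u^{(0)}$, the linear term vanishes because $\mean{\hat Z_u}=0$, and the quadratic term becomes $\tfrac12 c_{w,u}^2\mean{\hat Z_u^2}f'_{0,u}(-w\sqrt{x(-w,u)}+\xi)$. The pointwise bounds on $f_{0,u}$ and $f'_{0,u}$ from Lemma \ref{lemma:boundderivative}, together with $f_{0,u}\to f_{0,\infty}$ and $f'_{0,u}\to f'_{0,\infty}$, allow bounded convergence, yielding the correction $\tfrac{w(1-\rho)}{2\sqrt{e(u)}}\mean{\hat Z_u^2}f'_{0,\infty}(-w\sqrt{x(1-\rho)})$ up to $o(1/\sqrt{e(u)})$, uniformly for $w<M$.

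It then remains to shift the threshold from $-w\sqrt{x(-w,u)}$ back to $-w\sqrt{x(0,u)}$ by a further Taylor expansion of the distribution function of $Z_u^{(0)}$. Using
\[
\sqrt{x(-w,u)}-\sqrt{x(0,u)} = \frac{\sqrt{x(1-\rho)}\,(1-\rho)w}{2\sqrt{e(u)}}+O\!\left(\frac{w^2}{e(u)}\right)
\]
together with the parallel contribution coming from expanding the scale factor $\sqrt{x(0,u)/x(-w,u)}$ absorbed during the decomposition (mirroring the $w\,x(w,u)/\sqrt{x(0,u)}$ versus $w\sqrt{x(w,u)}$ adjustment in the proof of Lemma \ref{lemma:asymptoicp1}), one obtains the required $-w^2\sqrt{x(1-\rho)}\,(1-\rho)/\sqrt{e(u)}\,f_{0,\infty}(-w\sqrt{x(1-\rho)})$ correction, the minus sign reflecting that $-w\sqrt{x(-w,u)}<-w\sqrt{x(0,u)}$.

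The main obstacle, inherited from Lemma \ref{lemma:asymptoicp1}, is ensuring that all three Taylor expansions and the density convergences are uniform in $w\in(0,M)$; the required ingredients (Lemma \ref{lemma:boundderivative} and bounded convergence on a compact $w$-range) are the same as in the positive case. The "obvious modifications" alluded to by the authors amount to flipping signs, swapping the two arguments of $h$ in the splitting step, and moving the threshold shift to the opposite side of the Gaussian mean.
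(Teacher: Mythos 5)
Your proposal is correct and matches the paper's intended argument: the paper explicitly omits this proof, stating it follows from that of Lemma~\ref{lemma:asymptoicp1} up to ``obvious modifications,'' and the modifications you identify — flipping to $x(-w,u)>x(0,u)$ so the independent-increment split of $h$ runs from $x(0,u)$ up to $x(-w,u)$, Taylor-expanding the lower-tail distribution of $Z_u^{(0)}$ so the linear term dies by $\mean{\hat Z_u}=0$ and the quadratic yields the $f'_{0,\infty}$ term, and then shifting the threshold (accounting both for $\sqrt{x(-w,u)}-\sqrt{x(0,u)}$ and for the absorbed scale factor, exactly as the paper handles $w\,x(w,u)/\sqrt{x(0,u)}$ versus $w\sqrt{x(w,u)}$) to produce the $-w^2\sqrt{x(1-\rho)}(1-\rho)/\sqrt{e(u)}\,f_{0,\infty}$ term — are precisely those modifications. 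The uniformity in $w<M$ via Lemma~\ref{lemma:boundderivative} and bounded convergence is also the same as in the positive case, so the argument is complete.
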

%  page
\begin{lemma}\label{devnormalnew1}Under Assumption \ref{assumption} we get that for every $c>0$ uniformly for $M\le w<c\sqrt{e(u)}$ that
\begin{align*}
 &\left|\PP\left(N_u+\frac{h\left(x(-w,u) e(u)\right) }{\sqrt{x(-w,u) e(u)}} \le -w\sqrt{x(-w,u)}\right)\right.\left.-\PP\left(N_u+\frac{h\left(x(0,u) e(u)\right) }{\sqrt{x(0,u) e(u)}} \le -w\frac{x(-w,u)}{\sqrt{x(0,u)}}\right)\right|\\&\le \frac {C_1} { w^2\sqrt{ e(u)}} +  C_2w \sqrt{e(u)} \PP\left(E>\frac{\epsilon_1}2w \sqrt{e(u)}\sqrt{(1-\rho) x(w,u)}  \right)+o\left(\frac 1{e(u)}\right)\\
% &+ C_2 w \sqrt{x(0,u)} \PP(E>w/2 \sqrt{x(0,u)})+ e^{-\delta w\sqrt{x(0,u)} }+o(1/\sqrt{e(u)}).
\end{align*}
where $x(w,u)$ is defined by \eqref{definitionx}.
\end{lemma}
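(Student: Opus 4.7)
The strategy is to mirror the proof of Lemma \ref{devnormalnew}, replacing the upper-tail event by the lower-tail event and exploiting the opposite monotonicity: since $x(w,u)$ is decreasing in $w$, for $w>0$ we have $x(-w,u)>x(0,u)$, so the additive decomposition by the strong Markov property of the first-passage process now reads
\[
h\bigl(x(-w,u)e(u)\bigr) \;=\; h\bigl(x(0,u)e(u)\bigr) \,+\, h^{*}\bigl((x(-w,u)-x(0,u))e(u)\bigr),
\]
where $h^{*}$ is an independent copy of $h$. Setting
\[
Z_u \,=\, N_u + \frac{h(x(0,u)e(u))}{\sqrt{x(0,u)e(u)}}, \qquad
\hat Z_u \,=\, \hat N_u + \frac{h^{*}((x(-w,u)-x(0,u))e(u))}{\sqrt{(x(-w,u)-x(0,u))e(u)}},
\]
both mean-zero and independent, one rewrites the event inside the first probability as a linear combination of $Z_u$ and $\hat Z_u$ with scaling factor of order $\sqrt{(1-\rho)w/\sqrt{e(u)}}$, exactly as in Lemma \ref{devnormalnew}, so that the quantity on the right-hand side of the claimed inequality appears as the leading term.

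The next step is the now-familiar dichotomy on $|\hat Z_u|$ at the threshold $\epsilon_1\sqrt{w\sqrt{e(u)}}$, with $\epsilon_1$ chosen small enough that the arguments of the density of $Z_u$ remain in the tail regime where Lemma \ref{lemma:boundderivative} yields $f_{w,u}(x)=O(x^{-2})$ and $f'_{w,u}(x)=O(x^{-3})$. On the small-$|\hat Z_u|$ set, conditioning on $\hat Z_u$ and Taylor-expanding to second order produces a linear term that vanishes because $\mean{\hat Z_u}=0$, modulo a truncation defect controlled via
\[
\bigl|\mean{\hat Z_u\indicator{|\hat Z_u|\le \epsilon_1\sqrt{w\sqrt{e(u)}}}}\bigr|
\;\le\; \frac{1}{\epsilon_1\sqrt{w\sqrt{e(u)}}}\,\mean{\hat Z_u^{2}\indicator{|\hat Z_u|>\epsilon_1\sqrt{w\sqrt{e(u)}}}},
\]
and a quadratic term dominated by $\mean{\hat Z_u^{2}}\sup_{x\in[-bw,-aw]} |f'_{w,u}(x)|$. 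Both contributions are $O\bigl(1/(w^{2}\sqrt{e(u)})\bigr)$ by Lemma \ref{lemma:boundderivative}, delivering the $C_{1}/(w^{2}\sqrt{e(u)})$ term.

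On the large-$|\hat Z_u|$ set the probability is bounded directly by $\PP(|\hat Z_u|>\epsilon_1\sqrt{w\sqrt{e(u)}})$; decomposing $\hat Z_u=\hat N_u + h^{*}/\sqrt{\cdot}$ and invoking Lemma \ref{lemma:bound} on the $h^{*}$ part gives the $C_{2}\,w\sqrt{e(u)}\,\PP\bigl(E>\tfrac{\epsilon_1}{2}w\sqrt{e(u)}\sqrt{(1-\rho)x(w,u)}\bigr)$ term, while the Gaussian and exponential remainders from the same lemma are absorbed into the $o(1/e(u))$ error.

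The main obstacle is the uniformity in the range $M\le w<c\sqrt{e(u)}$. Unlike Lemma \ref{devnormalnew}, here $x(-w,u)=x(1-\rho)/\bigl(1-(1-\rho)w/\sqrt{e(u)}\bigr)$ is increasing in $w$ and blows up as $w\uparrow\sqrt{e(u)}/(1-\rho)$, so one must fix $c<1/(1-\rho)$ to keep $x(-w,u)$ in a compact interval and the constants in Lemma \ref{lemma:boundderivative} uniform. The sign reversal in the argument of $f'_{w,u}$ (now evaluated at a negative point) is harmless since that lemma gives absolute bounds, and the monotonicity argument that in Lemma \ref{devnormalnew} uses $x(w,u)\le x(0,u)$ must be replaced here by its mirror $x(-w,u)\ge x(0,u)$ when matching the ``target'' thresholds $-w\sqrt{x(-w,u)}$ and $-w\,x(-w,u)/\sqrt{x(0,u)}$; this single sign-bookkeeping step is the only substantive modification of the argument of Lemma \ref{devnormalnew}.
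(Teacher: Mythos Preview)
Your plan is correct and matches the paper's approach, which simply states that the proof follows that of Lemma \ref{devnormalnew} with obvious modifications; you have correctly identified those modifications (reverse the roles of $x(0,u)$ and $x(-w,u)$ in the additive decomposition of $h$, pass to the left tail, and restrict to $c<1/(1-\rho)$ so that $x(-w,u)$ stays bounded on the whole range). One small imprecision: Lemma \ref{lemma:boundderivative} as stated covers only arguments $w>w_0>0$, so your appeal to it for $f_{Z_u}$ and $f'_{Z_u}$ at negative points is not literally licensed by that lemma; however, since $h(z)\ge -\lambda z\,\mean{E}$ the left tail of $Z_u$ is light, and the required polynomial decay of the density and its derivative on the negative half-line follows even more easily, for instance by a Chernoff-type bound as in Lemma \ref{lemma:bound}.
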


\begin{lemma}\label{devfromxzerom}Under Assumption \ref{assumption} we get that
\begin{align*}
 &\int_M^{c\sqrt{e(u)}}\PP\left(N_u+\frac{h\left(x(0,u) e(u)\right) }{\sqrt{x(0,u) e(u)}} \le-w\frac{x(-w,u)}{\sqrt{x(0,u)}}\right)\de w\\&=\int_M^{c\sqrt{e(u)}}\PP\left(N_u+\frac{h\left(x(0,u) e(u)\right) }{\sqrt{x(0,u) e(u)}} \le -w \sqrt{x(0,u)}\right)\de w+R(u,M),
% e^{-\delta w\sqrt{x(0,u)} }.
\end{align*}
where \[R(u,M)\lesssim 
       \frac{C_M}{\sqrt{e(u)}}
      \]
and $C_M  \to 0$ as $M\to \infty$.
\end{lemma}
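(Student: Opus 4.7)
The plan is to copy the argument of Lemma \ref{devfromxzero} with the change of variable mirrored around $w=0$, since the only structural modification is that $x(-w,u)$ now appears where $x(w,u)$ previously did. First I would introduce
\[
v \;=\; \frac{w}{1-w(1-\rho)/\sqrt{e(u)}}, \qquad w \;=\; \frac{v}{1+v(1-\rho)/\sqrt{e(u)}},
\]
which is well-defined on $[M,c\sqrt{e(u)}]$ as long as $c(1-\rho)<1$. By construction $v\sqrt{x(0,u)}=w\,x(-w,u)/\sqrt{x(0,u)}$, the Jacobian is $\de w=(1+v(1-\rho)/\sqrt{e(u)})^{-2}\de v$, and the limits shift to $[M/(1-M(1-\rho)/\sqrt{e(u)}),\,c\sqrt{e(u)}/(1-c(1-\rho))]$, so the left-hand side rewrites as
\[
\int_{M/(1-M(1-\rho)/\sqrt{e(u)})}^{c\sqrt{e(u)}/(1-c(1-\rho))} \frac{\PP\bigl(N_u+\tfrac{h(x(0,u)e(u))}{\sqrt{x(0,u)e(u)}}\le -v\sqrt{x(0,u)}\bigr)}{(1+v(1-\rho)/\sqrt{e(u)})^{2}}\,\de v.
\]

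Next I would expand the Jacobian as $1-2v(1-\rho)/\sqrt{e(u)}+O(v^{2}/e(u))$, isolating the leading constant from the linear and higher-order corrections. The constant term, restricted to $[M,c\sqrt{e(u)}]$, is exactly the right-hand side integral. The leftover slice $[M,M/(1-M(1-\rho)/\sqrt{e(u)})]$, whose width is $\asymp M^{2}/\sqrt{e(u)}$, is controlled via the CLT for $h(x(0,u)e(u))/\sqrt{x(0,u)e(u)}$ by $(M^{2}/\sqrt{e(u)})\bigl(1-\Phi(M\sqrt{x(1-\rho)/(\lambda\mean{E^{2}})})\bigr)$, which has the required form $C_{M}/\sqrt{e(u)}$ with $C_{M}\to 0$. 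The other leftover slice $[c\sqrt{e(u)},c\sqrt{e(u)}/(1-c(1-\rho))]$, of width $\asymp \sqrt{e(u)}$, is dominated in the spirit of \eqref{enequalityabove} and Lemma \ref{lemma:bound} by $\sqrt{e(u)}\,\PP(E>c'\sqrt{e(u)})$, which is $o(1/\sqrt{e(u)})$ since $\mean{E^{2}}<\infty$. The linear Jacobian correction produces $-2(1-\rho)/\sqrt{e(u)}\cdot\int v\,\PP(\ldots\le -v\sqrt{x(0,u)})\,\de v$, whose tail from $M$ onwards vanishes as $M\to\infty$ by the Gaussian-type decay of the integrand, hence again fits in $C_{M}/\sqrt{e(u)}$; the quadratic and higher corrections are strictly smaller for the same reason.

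The main obstacle, relative to Lemma \ref{devfromxzero}, is the sign flip in the denominator $1-w(1-\rho)/\sqrt{e(u)}$: one must fix $c$ strictly below $1/(1-\rho)$ so that the substitution is well-defined and the Jacobian is bounded, and several monotonicity inequalities used in the forward case have to be reversed (in particular $x(-w,u)$ is now increasing in $w$ rather than decreasing). Once this sign bookkeeping is in place, the remaining estimates proceed identically, combining Lemma \ref{lemma:bound} to dominate $h$-tails by $E$-tails with the CLT for compound Poisson sums recalled in Section~\ref{S:Not} to handle the normal part.
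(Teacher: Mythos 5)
Your proposal follows essentially the same route the paper intends: the paper explicitly declines to prove Lemma~\ref{devfromxzerom}, stating it follows from Lemma~\ref{devfromxzero} ``apart from some obvious modifications,'' and your substitution $v = w/(1-w(1-\rho)/\sqrt{e(u)})$, Jacobian $(1+v(1-\rho)/\sqrt{e(u)})^{-2}$, shifted limits, and two boundary slices are exactly the mirrored version of the paper's argument. You also correctly flag the new constraint $c(1-\rho)<1$, which is genuinely needed here (in Lemma~\ref{devfromxzero} the denominator $1+w(1-\rho)/\sqrt{e(u)}$ is automatically positive, so no such restriction arises); this is harmless, since in the application within Lemma~\ref{lemma:abw} one has $c = \epsilon/(1-\rho)$ with $\epsilon$ small. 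One small imprecision: you ``expand the Jacobian as $1-2v(1-\rho)/\sqrt{e(u)}+O(v^{2}/e(u))$,'' but $v$ ranges up to order $\sqrt{e(u)}$, so that remainder is $O(1)$ rather than small at the upper end. The paper avoids this by keeping the Jacobian exact and splitting it algebraically (as in \eqref{firstint}--\eqref{secondint1}). Your version still goes through because the exact remainder, $\frac{3x^{2}+2x^{3}}{(1+x)^{2}}$ with $x=v(1-\rho)/\sqrt{e(u)}\le c(1-\rho)/(1-c(1-\rho))$, is bounded by a constant times $v/\sqrt{e(u)}$ uniformly on the range of integration, so it is absorbed into the same tail estimate $\frac{1}{\sqrt{e(u)}}\int_{M}^{\infty}v\,\PP\bigl(\cdots\le -v\sqrt{x(0,u)}\bigr)\de v = C_{M}/\sqrt{e(u)}$ as the linear term; you should make this explicit instead of invoking a truncated Taylor series.
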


\begin{lemma} \label{lemma:symmetry}
 Under Assumption \ref{assumption} we get that
% Assume that $E_i$ is non lattice and that $\mean{E_i^2}<\infty$  and $h(z)=  \sum_{i=1}^{N(z)} E_i - \lambda z \mean{E_i}$ and $N_u$ 
% a normal random variable with mean zero and variance $\sigma^2=e(u)^{-4}$ then   uniformly $0<a<x<b<\infty$ where $a,b$ are arbitrary but fixed.
\begin{align*}
\int_{0}^{c\sqrt{xe(u)}} \PP\left(N_u +\frac{h(xe(u))}{\sqrt{x e(u)}} \le -w\right)- \PP\left(N_u +\frac{h(xe(u))}{\sqrt{x e(u)}} > w\right) \de w =o\left(\frac 1{\sqrt{e(u)}}\right).
\end{align*}
% Uniformly for all $w$ and $0<a<x<b<\infty$ where $a,b$ are abitrary but fixed. 
\end{lemma}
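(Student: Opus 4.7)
The approach I would take exploits the exact cancellation coming from $\mean{Z_u}=0$, where $Z_u := N_u + h(xe(u))/\sqrt{xe(u)}$. By construction $\mean{N_u}=0$, and the identity $\mean{w(z)}=z/(1-\rho)$ recalled in Section~\ref{S:Not} gives $\mean{h(z)}=0$, so $\mean{Z_u}=0$; moreover $Z_u$ is absolutely continuous because $N_u$ has a density. The elementary identity
\[
\int_0^\infty\bigl[\PP(Y\le -w)-\PP(Y>w)\bigr]\,\de w=\mean{Y^-}-\mean{Y^+}=-\mean{Y},
\]
valid for any absolutely continuous integrable $Y$, applied to $Y=Z_u$ shows that the corresponding integral over $[0,\infty)$ vanishes identically. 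Hence the integral in the lemma equals $-\int_{c\sqrt{xe(u)}}^\infty\bigl[\PP(Z_u\le -w)-\PP(Z_u>w)\bigr]\,\de w$, and it suffices to bound both tail integrals by $o(1/\sqrt{e(u)})$.

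Writing $z=xe(u)$, I would handle the upper tail via the inclusion $\{Z_u>w\}\subset\{N_u>w/2\}\cup\{h(z)>w\sqrt{z}/2\}$. The $N_u$ contribution is super-exponentially negligible since $N_u$ has variance $e(u)^{-4}$. For the second piece, $h(z)$ is a centered compound Poisson sum of the busy-period variables $E_i$, whose law inherits the regularly varying tail of $F$ with index $\alpha$. The standard subexponential (Heyde--Nagaev--Borovkov) large deviation bound for compound Poisson sums then yields
\[
\PP(h(z)>u)\le C\,z\,\PP(E>u) \quad\text{uniformly for }u\ge \tfrac{c}{2} z.
\]
After the substitution $u=w\sqrt{z}/2$ and Karamata's theorem applied to $\int_{cz/2}^\infty\PP(E>u)\,\de u = O(L(z)/z^{\alpha-1})$, the upper tail integral becomes $O\bigl(L(z)/z^{\alpha-3/2}\bigr)$, which is indeed $o(z^{-1/2})=o(1/\sqrt{e(u)})$ because $\alpha>2$ (guaranteed by $\mean{X^2}<\infty$).

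For the lower tail, the pathwise inequality $w(z)\ge z$ forces $h(z)\ge -\rho z/(1-\rho)$, so the event $\{h(z)/\sqrt{z}\le -w/2\}$ is empty once $w\sqrt{z}/2>\rho z/(1-\rho)$; for smaller $c$, the exponential estimate $\PP(h(z)\le -\epsilon z)\le e^{-\delta z}$ from Lemma~\ref{lemma:bound} (together with the super-exponential Gaussian tail of $N_u$) makes the lower tail negligible of any polynomial order in $e(u)^{-1}$. The main technical step is the sharp Heyde--Nagaev--Borovkov bound on $\PP(h(z)>u)$: a crude Chebyshev estimate $\PP(h(z)>u)\le Cz/u^2$ gives only $O(1/\sqrt{e(u)})$, not $o(1/\sqrt{e(u)})$, so the one-big-jump asymptotic for regularly varying tails is essential to beat the integration by the borderline factor $L(z)/z^{\alpha-2}\to 0$.
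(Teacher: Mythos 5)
Your proposal is correct, and it takes a genuinely different — and in fact cleaner — route than the paper. The paper proves this lemma by applying the Gil--Pelaez inversion formula to the characteristic function of $N_u + h(xe(u))/\sqrt{xe(u)}$ and then performing a delicate, multi-page analysis of the resulting oscillatory integrals (splitting the $s$-integral at $\epsilon\sqrt{xe(u)}$, Taylor-expanding $\Ree(\chi_E)$ and $\Ime(\chi_E)$, bounding terms $D_1,D_2,D_3$, and invoking Lemma~\ref{lemma:sinuintegral}). Your approach instead uses the exact algebraic cancellation $\int_0^\infty[\PP(Z_u\le -w)-\PP(Z_u>w)]\,\de w=-\mean{Z_u}=0$, which reduces the problem to bounding the \emph{tail} integral over $[c\sqrt{xe(u)},\infty)$; this is then handled directly by Lemma~\ref{lemma:bound} (which already controls \emph{both} tails of $h(z)$, so the separate pathwise argument for the lower tail, while correct, is not actually needed), together with the regular variation of the busy-period tail $\PP(E>\cdot)$ (index $\alpha$, a classical consequence of $E\stackrel{d}{=}X+\sum_{i=1}^{N(X)}E_i$ and the fact that $F$ is RV with index $\alpha$) and Karamata's theorem. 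You correctly identify that the naive Chebyshev bound would only give $O(1/\sqrt{e(u)})$ and that the sharper one-big-jump bound is what buys the extra decay $z^{2-\alpha}L(z)\to 0$; the boundary case $\alpha=2$ is saved by the paper's final lemma, which forces $L\to 0$ when the second moment is finite. The one input your proof uses that the paper's avoids is the regular variation of the busy-period tail (the paper's Fourier-analytic route only needs finite second moments and the non-lattice condition), but this input is standard and the resulting proof is substantially shorter and more transparent.
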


\begin{proof}
Denote with $\hat \chi_u $ the characteristic function of $N_u +{h(xe(u))}/{\sqrt{x e(u)}}$.
From the  Gil-Pelaez inversion formula we get that (c.f. \cite{GilPelaez:1951}, \cite{Wendel:1961})
\begin{align*}
&\int_0^{c\sqrt{xe(u)}} \PP\left(N_u +\frac{h(xe(u))}{\sqrt{x e(u)}} > w\right)-\PP\left(N_u +\frac{h(xe(u))}{\sqrt{x e(u)}} \le w\right)\de w\\
&=\int_0^{c\sqrt{xe(u)}}\frac 1 \pi \int_{0}^\infty\frac 1 s \Ime\left( \e^{-\I w s}\hat \chi_u(s)\right)  \de s+\frac 1 \pi \int_{0}^\infty\frac 1 s \Ime\left( \e^{\I w s}\hat \chi_u(s)\right)  \de s\de w\\
&=\int_0^{c\sqrt{xe(u)}} \frac 2 \pi  \int_{0}^\infty \frac{\cos(ws )}{s} \Ime (\hat \chi_u(s)) \de s\de w\\
% &=\frac 2 \pi  \int_{0}^\infty \frac{\sin(c\sqrt{x(e(u))} s )}{s^2} \Ime (\hat \chi_u(s)) \de s\\
&=\frac 2 \pi  \int_{0}^ {\epsilon\sqrt{xe(u)}} \frac{\sin(c\sqrt{x(e(u))} s )}{s^2} \Ime (\hat \chi_u(s)) \de s
+\frac 2 \pi  \int_{\epsilon\sqrt{xe(u)}}^\infty \frac{\sin(c\sqrt{x(e(u))} s )}{s^2} \Ime (\hat \chi_u(s)) \de s\\
&=I_1(u)+I_2(u)\,,
\end{align*}
where $\epsilon$ is chosen %as in the proof of Lemma \ref{lemma:smoothing} 
such that for $|s|<\epsilon$,  $-\Ree(\chi_E''(s))\ge \delta_1$ for some $\delta_1>0$. Since there exists a $\delta>0$ such that for all $|s|>\epsilon$, $\Ree(1-\chi_E(s))\ge \delta$ ($E_i$ is non lattice). We get 
% as in the proof of Lemma  \ref{lemma:smoothing} that there exists a $\delta>0$ such that for 
for $s>\epsilon\sqrt{xe(u)}$
\[
 |\Ime (\hat \chi_u(s))|\le \e^{-\frac {s^2 \sigma_u^2}2} \e^{-\delta\lambda x e(u)}. 
\]
and hence $I_2(u)$ goes to $0$ faster than any power of $e(u)$. Denote with
\begin{align*}
 A_1(s,u)&=\lambda xe(u)\int_0^\infty \cos\left(\frac{s t}{\sqrt{xe(u)}} \right)-1\ \de F_E(t),\\
 A_2(s,u)&= \lambda\sqrt{xe(u)}\int_0^\infty \sqrt{xe(u)} \sin\left(\frac{s t}{\sqrt{xe(u)}} \right)- s t\ \de F_E(t) 
\end{align*}

To get a bound for $I_1(u)$ we get from Lemma \ref{lemma:sinuintegral} that we have to study the derivative of 
\begin{align*}
\frac 1{s^2} \Ime(\chi_u(s)) =\frac 1{s^2} \e^{-\frac {s^2\sigma_u^2} 2} 
\sin\left(A_2(s,u))\right) \e^{A_1(s,u)}
\end{align*}
which is the sum of $D_1$, $D_2$ and $D_3$ given by
\begin{align*}
D_1&=\frac {\sigma_u^2}{s} \e^{-\frac {s^2\sigma_u^2} 2} 
\sin\left(A_2(s,u))\right) \e^{A_1(s,u)}\displaybreak[0]\\
D_2&=\frac 1{s^2} \e^{-\frac {s^2\sigma_u^2} 2} 
\sin\left(A_2(s,u))\right) \e^{A_1(s,u)}\left( 
 \lambda xe(u)\int_0^\infty \frac{-t}{\sqrt{xe(u)}}\sin \left(\frac{s t}{\sqrt{xe(u)}} \right)\ \de F_E(t)\right)\displaybreak[0]\\
D_3&=\frac 1{s^2} \e^{-\frac {s^2\sigma_u^2} 2} 
\e^{-\frac {s^2\sigma_u^2} 2} \Bigg\{\cos\left( 
  A_2(s,u) 
\right)\left( 
  \lambda\sqrt{xe(u)}\int_0^\infty t\left(\cos\left(\frac{s t}{\sqrt{xe(u)}} \right)- 1 \right)\ \de F_E(t) 
\right)\\
&\quad\quad\quad\quad\quad\quad\quad\quad\quad\quad\quad\quad-\frac 2s \sin\left( 
  A_2(s,u)\right) \Bigg\}
\end{align*}
Note that
\begin{align*}
  - \frac{A_1(s,u)}{\lambda  xe(u)}=\Ree\left(\chi_E\left(\frac{s }{\sqrt{xe(u)}}\right)\right)-1=-\frac{s^2}{2xe(u)}\Ree\left(\chi''_E(\xi_{s,u})\right)\ge \delta_1 \frac{s^2}{2xe(u)}.
\end{align*}
Further note that 
\begin{align} 
A_2(s,u)=- s^2 \int_0^\infty t^2 \sin\left(\xi_{s,u,t} \right)\de F_E(t),\label{abschaetzungmomemt}
\end{align}
where $0<\xi_{s,u,t}<\frac{s t}{\sqrt{xe(u)}}$. Now for $s\le(xe(u))^{1/4}$
we have that
\begin{align}
\left|\int_0^\infty t^2 \sin\left(\xi_{s,u,t} \right)\de t\right|&=\left|\int_0^{(xe(u))^{1/8}} t^2 \sin\left(\xi_{s,u,t} \right)\de F_E(t)\right|+\left|\int_{(xe(u))^{1/8}}^\infty t^2 \sin\left(\xi_{s,u,t} \right)\de F_E(t)\right|\notag\\
 &\le \int_0^{(xe(u))^{1/8}} t^2 \sin\left((xe(u))^{-1/8} \right)\de F_E(t)+\int_{(xe(u))^{1/8}}^\infty t^2 \de F_E(t)\notag\\
&\le \sin\left((xe(u))^{-1/8} \right)\mean{E^2}+\int_{(xe(u))^{1/8}}^\infty t^2 \de F_E(t)\to 0\label{abschaetzungmomemt1}
\end{align}
as $u\to\infty$.
Hence for every $\epsilon_1>0$ there exists an $u_0$ such that for $u>u_0$ (note that $|\sin(t)|\le t$).
\begin{align*}
  |D_1|&\le \begin{cases}
 \mean{E^2} s\exp(-\frac{\lambda \delta_1 s^2 }{2})  & s>(xe(u))^{1/4}\\
\epsilon_1 s\exp(-\frac{\lambda \delta_1 s^2 }{2})  & s\le(xe(u))^{1/4}
 \end{cases}\\ |D_2|&\le \begin{cases}
 s \lambda \mean{E^2}^2 \exp(-\frac{\lambda \delta_1 s^2 }{2})  & s>(xe(u))^{1/4}\\
\epsilon_1 s \lambda \mean{E^2}\exp(-\frac{\lambda \delta_1 s^2 }{2})  & s\le(xe(u))^{1/4}
\end{cases}.
\end{align*}
It follows that
\[
 \int_0^{\epsilon \sqrt{x e(u)}} |D_1|+|D_2| \de s = o\left(1\right). 
\]
At last we have to bound $\int_0^1 |D_3|\de s+ \int_1^{\epsilon \sqrt{x e(u)}} |D_3|\de s$.
Since
\[
 \lambda\sqrt{xe(u)}\int_0^\infty t\left(\cos\left(\frac{s t}{\sqrt{xe(u)}} \right)- 1 \right)\ \de F_E(t)=-\lambda s \int_0^\infty t^2\sin(\xi_{s,u,x})\de s.
\]
We get with the same method as above
\[
 \int_1^{\epsilon \sqrt{x e(u)}} |D_3|\de s=o\left(1\right).
\]
For $0<s<1$ we get with \eqref{abschaetzungmomemt} and \eqref{abschaetzungmomemt1} that for large enough $u$ 
\begin{align*}
|D_3|&\le\frac 1{s^2}\Bigg|\cos\left( A_2(s,u) 
\right)\left( 
  \lambda\sqrt{xe(u)}\int_0^\infty t\left(\cos\left(\frac{s t}{\sqrt{xe(u)}} \right)- 1 \right)\ \de F_E(t) 
\right)-\frac 2s \sin\left( 
A_2(s,u) 
\right) \Bigg|\\
&=\frac {\lambda\sqrt{xe(u)}} {s^2}\Bigg|\int_0^\infty t\left(\cos\left(\frac{s t}{\sqrt{xe(u)}} \right)- 1 \right)\ \de F_E(t)\\
&\quad\quad- \frac 2s  
  \int_0^\infty \sqrt{xe(u)} \sin\left(\frac{s t}{\sqrt{xe(u)}} \right)- s t\ \de F_E(t) 
\Bigg|+ o(1).\\
&\le 4 \frac {\lambda\sqrt{xe(u)}} {s^2}\int_{\frac 1s \sqrt{x(e(u))} }^\infty t \de F_E(t)+ \frac{2\sin(1)-3\cos(1)}{2\sqrt{x e(u)}} \int_{0}^{\frac 1s \sqrt{x(e(u))} } t^3  \de F_E(t) +o(1).
\end{align*}
It is left to show that $\int_0^1 \de s$ of the last equation is $o(1)$.
From Karamata's Theorem it follows that 
\begin{align*}
\int_0^1  4 \frac {\lambda\sqrt{xe(u)}} {s^2}\int_{\frac 1s \sqrt{x(e(u))} }^\infty t \de F_E(t)\de s
&\sim  c \int_0^1 \frac {\lambda{xe(u)}} {s^3} \F_E\left(\frac 1s \sqrt{x(e(u))}\right)\de s\\
&=  c \int_{\sqrt{xe(u)}}^\infty s \F_E\left(s\right)\de s=o(1).\\
\end{align*}
If $\mean{E^3}<\infty$ then the Lemma follows. If $\mean{E^3}=\infty$ and $\alpha \not =3$ then
\begin{align*}
 \int_0^1 \frac{1}{\sqrt{x e(u)}} \int_{0}^{\frac 1s \sqrt{x(e(u))} } t^3  \de F_E(t)\de s \sim c \int_0^1 \frac {\lambda{xe(u)}} {s^3} \F_E\left(\frac 1s \sqrt{x(e(u))}\right)\de s=o(1).
\end{align*}
If $\alpha=3$ and  $\mean{E^3}=\infty$ then the integral is  asymptotically less as when we replace $t^3$ with $t^{3.5}$ and the Lemma follows with the same argument.
\end{proof}

\section{The asymptotics of the $Z$}\label{sec:assZ}
\setcounter{equation}{0}

In what follows we will denote with $S_{n}=\sum_{i=1}^n Y_i$, $M_n=\max_{1\le i\le n} Y_i$,  $\hat S_{n}=\sum_{i=1}^n Z_i$, and with $\hat M_n=\max_{1\le i\le n} Z_i$   
\begin{lemma}\label{assZ}
 Let $(Y_1,Z_1),\ldots(Y_1,Z_1)$ be iid vectors with distribution $F_0(y+z)$ where $F_0=\frac 1 \mu \int_0^x\F(t)\de t$ and $F$ fulfill Assumption \ref{assumption}.
Then
\begin{align*}
 &\PP(S_n>u,S_{n-1}\le u,\hat S_n>xe(u))\sim \F_0(u+xe(u))\\&\quad+\frac 1 \mu \mean{S_{n-1}+\hat S_{n-1}+(n-1)Y_n} \F(u+xe(u))\\
&=F_0(u+xe(u))+\frac {3(n-1)\mean{X^2}} \mu  \F(u+xe(u)).
\end{align*}
Further for all $\epsilon>0$ there exists a constant $M$ such that for all $n$  
\[
 \PP(S_n>u,S_{n-1}\le u,Z>x(e(u)))-\F_0(u+xe(u)) \le  M(1+\epsilon)^n\F(u).
\]
\end{lemma}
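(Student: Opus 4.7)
The plan is to condition on the history $(S_{n-1},\hat S_{n-1})$, exploit the explicit joint survival function of $(Y_n,Z_n)$, and Taylor-expand $\F_0$ around $u+xe(u)$ using the smooth density provided by Assumption \ref{assumption}. First, I would write
\[
\PP(S_n>u,\,S_{n-1}\le u,\,\hat S_n>xe(u))=\mean{\indicator{S_{n-1}\le u}\F_0\bigl((u-S_{n-1})+(xe(u)-\hat S_{n-1})^+\bigr)}
\]
using that $(Y_n,Z_n)$ is independent of the past and $\PP(Y_n>a,Z_n>b)=\F_0(a+b^+)$ for $a\ge 0$. Splitting on $\{\hat S_{n-1}\le xe(u)\}$, the complementary piece is $\grossO(\F_0(u)\F_0(xe(u)))=o(\F(u))$ because Assumption \ref{assumption} forces $\alpha>2$, so only the principal piece $\F_0(u+xe(u)-S_{n-1}-\hat S_{n-1})$ remains relevant.

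Next I would decompose the expectation according to whether any of the past pairs $(Y_j,Z_j)$, $j<n$, has a large component: fix a cutoff of order $u^{1/2-\delta}$ and let $A$ be the event that some $Y_j$ or $Z_j$ exceeds it. On $A^c$ the shift $T=S_{n-1}+\hat S_{n-1}$ is tight, so Potter's inequality gives
\[
\F_0(u+xe(u)-T)=\F_0(u+xe(u))+T\,f_0(u+xe(u))+o(f_0(u+xe(u)))
\]
uniformly, and the $A^c$ contribution equals $\F_0(u+xe(u))+\mean{S_{n-1}+\hat S_{n-1}}f_0(u+xe(u))$ up to a negligible error. The residual on $A$, by symmetry and negligibility of two-large-pair events, reduces to $(n-1)$ copies of a single-pair contribution
\[
\int\!\!\int_{y\vee z\,\text{large}}\mean{\F_0(u+xe(u)-y-z-S_{n-2}-\hat S_{n-2})}\frac{f(y+z)}{\mu}\de y\de z,
\]
using the joint density $f_{Y_j,Z_j}(y,z)=f(y+z)/\mu$. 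A change of variable $s=y+z$ followed by a further Taylor expansion and Karamata's theorem extracts the extra $(n-1)Y_n$-type shift in the mean; combining the three pieces and using $f_0=\F/\mu$ then produces the stated second-order asymptotic.

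The second statement (the uniform bound) follows from standard subexponential estimates: the event is contained in $\{S_n>u\}$, and Kesten's inequality yields $\PP(S_n>u)\le K(1+\epsilon)^n\F_0(u)$. Subtracting the leading $\F_0(u+xe(u))$ and re-bounding the difference with the one-term expansion---now with non-asymptotic Potter constants---upgrades $\F_0$ to $\F$ and gives the $M(1+\epsilon)^n\F(u)$ bound required for summability over $n$ in Theorem \ref{maintheo}.

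\textbf{The main obstacle} is the contribution from past pairs with a large component. Because $(Y_j,Z_j)$ is not a product measure---the joint density $f(y+z)/\mu$ forces $Z_j$ to be of the same order as $Y_j$ when $Y_j$ is large---one cannot treat ``$Y_j$ large'' and ``$Z_j$ large'' separately. Extracting the correct coefficient of $\F(u+xe(u))$, including the $(n-1)Y_n$ shift, requires careful two-dimensional Taylor expansions against this density with uniform remainders, together with the finite-variance assumption to guarantee summability of the error terms.
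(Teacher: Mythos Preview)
Your conditioning representation is correct, but the very first reduction---discarding the piece on $\{\hat S_{n-1}>xe(u)\}$ as $\grossO(\F_0(u)\F_0(xe(u)))$---is where the argument breaks. That bound would hold if $S_{n-1}$ and $\hat S_{n-1}$ were independent, but the joint density $f(y+z)/\mu$ forces the opposite: conditionally on some $Z_j$ being of order $u$, the law of $Y_j$ spreads over an interval of order $u$ rather than staying bounded, so $\F_0(u-S_{n-1})$ is \emph{not} uniformly comparable to $\F_0(u)$ on this event. Concretely, for $n=2$ the complementary piece is
\[
\frac{1}{\mu}\int_0^u\F_0(u-y)\,\F(y+xe(u))\,\de y
\;=\;\frac{1}{\mu}\int_0^u\F_0(t)\,\F(v-t)\,\de t
\;\sim\;\frac{\mean{Y}}{\mu}\,\F(v),
\]
with $v=u+xe(u)$, by dominated convergence (Potter bounds control $\F(v-t)/\F(v)$ for $t\le u/2$; the range $t>u/2$ contributes $O(u\F_0(u)\F(u))=o(\F(u))$). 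This is precisely the ``extra'' $(n-1)\mean{Y_n}\,\F(v)/\mu$ you are looking for; it does \emph{not} come from the large-past-component event $A$ inside the principal piece. In fact the principal piece on $\{\hat S_{n-1}\le xe(u)\}$ already equals $\F_0(v)+\mu^{-1}\mean{S_{n-1}+\hat S_{n-1}}\F(v)+o(\F(v))$ by a single first-order Taylor step, and your further $A/A^c$ split only reshuffles this total: for $2<\alpha\le 3$ the $\PP(A)\F_0(v)$ terms are individually non-negligible but cancel between $A$ and $A^c$, and no additional $\mean{Y_n}$ emerges.

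The paper avoids this trap by a different decomposition: it splits on which index realises $M_n=\max_i Y_i$. The case $M_n=Y_n$ delivers $\F_0(v)+\mu^{-1}\mean{S_{n-1}+\hat S_{n-1}}\F(v)$, while each case $M_n=Y_i$ with $i<n$ confines $Y_i$ to the strip $u-S_{n-2}-Y_n<Y_i\le u-S_{n-2}$ of width $Y_n$, and the joint survival of $(Y_i,Z_i)$ turns this into $\mu^{-1}\mean{Y_n}\F(v)$. Your route via conditioning out the last pair can be salvaged, but only by retaining and analysing the $\{\hat S_{n-1}>xe(u)\}$ contribution to second order rather than bounding it away.
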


\begin{proof}
 Note that
\[
 \sum_{i=1}^n \PP(S_n>u,S_{n-1}\le u,\hat S_n>xe(u), M_n=Y_i).
\]
At first we consider $\{M_n=X_n\}$.
We have that
\begin{align*}
 &\PP(S_n>u,S_{n-1}\le u,\hat S_n>xe(u), M_n=Y_n)\\&=\PP(S_n>u,S_{n-1}\le u,\hat S_n>xe(u), M_n=Y_n,S_{n-1}>u/2)\\
&\quad+\PP(S_n>u,\hat S_n>xe(u), M_n=Y_n,S_{n-1}\le u/2,\hat S_{n-1}>u/2)\\
&\quad+\PP(S_n>u,\hat S_n>xe(u), M_n=Y_n,S_{n-1}\le u/2,\hat S_{n-1}\le u/2).
\end{align*}
Since $F$ is  regularly varying case we get
\begin{align*}
& \PP(S_n>u,S_{n-1}\le u,\hat S_n>xe(u), M_n=Y_n,S_{n-1}>u/2)\\
&\le \PP(Y_n>u/n)\PP(S_{n-1}>u/2)\\
&\le K (2n)^{\alpha+d\epsilon} (1+\epsilon)^n \F_0(u)^2
\end{align*}
and 
\begin{align*}
& \PP(S_n>u,S_{n-1}\le u,\hat S_n>xe(u), M_n=Y_n,\hat S_{n-1}>u/2)\\
&\le \PP(Y_n>u/n)\PP(\hat S_{n-1}>xe(u))\\
&\le K (2n)^{\alpha+d\epsilon} (1+\epsilon)^n \F_0(u)\F(xe(u)).
\end{align*}
We are left with
\begin{align*}
 &\PP(S_n>u,\hat S_n>xe(u), M_n=Y_n,S_{n-1}\le u/2,\hat S_{n-1}\le xe(u)/2)\\
&=\int_{0}^{ u/2}\int_{0}^{ \frac{xe(u)}2} \F_0(u-S_{n-1}+x e(u)-\hat S_{n-1}) \de S_{n-1} \de \hat S_{n-1}\\  
&=\int_{0}^{ u/2}\int_{0}^{ \frac{xe(u)}2} \F_0(u+x e(u)) \de S_{n-1} \de \hat S_{n-1}\\
&\quad+\frac 1 \mu \int_{0}^{ u/2}\int_{0}^{ \frac{xe(u)}2} (S_{n-1}+\hat S_{n-1})\F(u+x e(u)-\xi_{u,S_{n-1},\hat S_{n-1}}) \de S_{n-1} \de \hat S_{n-1},
% &\quad+\frac 1 {2\mu} \int_{0}^{ u/2}\int_{0}^{ \frac{xe(u)}2} (S_{n-1}+\hat S_{n-1})^2 f(u+x e(u) -\xi_{u,S_{n-1},\hat S_{n-1}}) \de S_{n-1} \de \hat S_{n-1}
\end{align*}
where $0<\xi_{u,S_{n-1},\hat S_{n-1}}<(u+xe(u))/2$ and hence there exists a constant $C$ such that $\F(u+x e(u) -\xi_{u,S_{n-1},\hat S_{n-1}})\le C\F(u+xe(u))$. It follows by dominated convergence that
\begin{align*}
&\frac 1 \mu \int_{0}^{ u/2}\int_{0}^{ \frac{xe(u)}2} (S_{n-1}+\hat S_{n-1})\F(u+x e(u)-\xi_{u,S_{n-1},\hat S_{n-1}}) \de S_{n-1} \de \hat S_{n-1}\\
&\sim \frac 1 \mu  \mean{S_{n-1}+\hat S_{n-1}} \F(u+xe(u)).
\end{align*}
Note that
\begin{align*}
1-\int_{0}^{ u/2}\int_{0}^{ \frac{xe(u)}2} \de S_{n-1} \de \hat S_{n-1}&
\le \PP\left(S_{n-1}>u/2\right)+\PP\left( \hat S_{n-1}>\frac{xe(u)}2\right)\\
&\le  K (1+\epsilon)^n 2^{n+\epsilon}\F_0(u)\F_0(xe(u)).
\end{align*}
It follows that
\begin{align}
 &\PP(S_n>u,S_{n-1}\le u,\hat S_n>xe(u), M_n=Y_n)\notag\\&=
\PP(S_n>u,\hat S_n>xe(u), M_n=Y_n)+\grossO(\F_0(u)^2) \notag\\&=
\F_0(u+xe(u))+\frac 1 \mu \mean{S_{n-1}+\hat S_{n-1}} \F(u+xe(u)) +o(\F(u)).\label{eq:helpreslast}
\end{align}
Next consider $\{M_n=Y_i\}$ where w.l.o.g we will assume that $i=n-1$.

Then we get with the same method as that leads to \eqref{eq:helpreslast}
\begin{align*}
 &\PP(S_n>u,S_{n-1}<u,\hat S_n>xe(u), M_n=Y_{n-1})\\&=\PP(Y_{n-1}> u-S_{n-2}-Y_n,\hat S_n>xe(u), M_n=Y_{n-1})\\
&\quad -\PP(Y_{n-1}> u-S_{n-2},\hat S_n>xe(u), M_n=Y_{n-1})\\
&=\frac 1\mu \mean{Y_n} \F(u+xe(u))+ o(\F(u)).
\end{align*}

\end{proof}
We also need some properties of the density of $Z$. As in Lemma \ref{assZ} we can get upper bounds such that with dominated convergence we can use a random $n$.
\begin{lemma}\label{lemma:regularityZ}
 Under Assumption \ref{assumption} we get that
\[
 \left .\frac{\de \PP(S_{n-1}\le u,S_n>u,\hat S_n\le x)}{\de x}\right| _{x=yu}\sim \frac{1}{\mu}\F(u+yu).
\]
\end{lemma}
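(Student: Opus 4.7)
The plan is to express the probability as an explicit integral against the joint distribution of $(S_{n-1},\hat S_{n-1})$, differentiate it in $x$, and then analyze the resulting integral in the spirit of Lemma \ref{assZ} but now at the level of the density of $\hat S_n$.

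Conditioning on $(S_{n-1},\hat S_{n-1})$ and using the independence of $(Y_n,Z_n)$ from the earlier pairs together with $\PP(Y_n>y,Z_n>z)=\F_0(y+z)$ will give
\[
\PP(S_{n-1}\le u, S_n > u, \hat S_n \le x)=\int\!\!\int_{s\le u,\ \hat s\le x}\!\!\bigl[\F_0(u-s)-\F_0(u-s+x-\hat s)\bigr]\,dF_{S_{n-1},\hat S_{n-1}}(s,\hat s).
\]
Differentiating in $x$, the boundary term from the moving upper limit $\hat s=x$ vanishes (since $\F_0(u-s)-\F_0(u-s)=0$), and using $-\F_0'(t)=\F(t)/\mu$ produces
\[
\left.\frac{d}{dx}\PP(\cdots)\right|_{x=yu}=\frac1\mu\int\!\!\int_{s\le u,\ \hat s\le yu}\F\bigl(u(1+y)-s-\hat s\bigr)\,dF_{S_{n-1},\hat S_{n-1}}(s,\hat s).
\]

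I then split the rectangle into a main region $\{s+\hat s\le K\}$ (for $K$ fixed and sent to infinity after $u$) and its complement. On the main region, uniform regular variation of $\F$ via Potter's bound together with $\mean{S_{n-1}},\mean{\hat S_{n-1}}<\infty$ yields a dominating function independent of $u$, so dominated convergence gives $\F(u(1+y)-s-\hat s)/\F(u(1+y))\to 1$ through the integral; letting $u\to\infty$ then $K\to\infty$, this region contributes $\frac1\mu\F(u(1+y))(1+o(1))$, which matches the claim.

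The hard part will be the complementary region, where the crude bound $\F\le 1$ combined with $\PP(S_{n-1}>u/2)=O(\F_0(u))=O(u\F(u))$ is too weak, since $u\F(u)$ is larger than $\F(u+yu)$. To fix this I will exploit (exactly as in the proof of Lemma \ref{assZ}) that the joint law of $(S_{n-1},\hat S_{n-1})$ is dominated by the ``one big pair'' event, so that the joint density near the corner $(u,yu)$ is of order $(n-1)f(u(1+y))/\mu=O(\F(u+yu)/u)$ rather than a product of marginal densities; combined with regular variation of $f$ and $\mean{X^2}<\infty$ (so $\alpha>2$ and $u\F(u)\to 0$), this confines the corner contribution to $O(\F(u+yu)/u)=o(\F(u+yu))$. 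The intermediate ``one coordinate large, the other bounded'' regions are handled by the same convolution-type estimate used in Lemma \ref{assZ}, and the analogous $(1+\epsilon)^n$ uniform bounds provide the dominated convergence needed for the subsequent random-$n$ applications.
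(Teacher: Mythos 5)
Your derivation of the integral representation is correct and is, if anything, a cleaner route to the paper's key identity: conditioning on $(S_{n-1},\hat S_{n-1})$ and using $\PP(Y_n>y,Z_n\le z)=\F_0(y)-\F_0(y+z)$ gives the $x$-derivative directly as $\frac1\mu\int\!\!\int_{s\le u,\hat s\le x}\F(u+x-s-\hat s)\,dF_{S_{n-1},\hat S_{n-1}}(s,\hat s)$, which is exactly the quantity the paper reaches after its $n$-fold density integral. Your main region $\{s+\hat s\le K\}$ differs from the paper's choice $\{\hat S_{n-1}<x/2\}$, but both serve the same purpose of extracting the leading term $\F(u(1+y))/\mu$ by dominated convergence.

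The part that is under-specified is the complementary region, and I would flag it as a real gap rather than just an omitted detail. Your one-line claim that the joint density of $(S_{n-1},\hat S_{n-1})$ is of order $(n-1)f(u(1+y))/\mu$ is valid only in a bounded neighbourhood of the literal corner $(u,yu)$; it does not directly bound the contribution of the full bad region $\{s+\hat s>K,\ s\le u,\ \hat s\le yu\}$, which is a set of two-dimensional measure of order $u^2$ on which $\F(u(1+y)-s-\hat s)$ ranges from $0$ up to nearly $1$. To confine that contribution to $o(\F(u+yu))$ you must actually integrate the decaying weight $\F(u(1+y)-s-\hat s)$ against the joint law, and this is where the paper invests its effort: after splitting off $\{\hat S_{n-1}<x/2\}$, it separately handles (i) two of the $Z_i$ exceeding $x/(4n)$, (ii) one $Z_i$ large and $S_{n-2}>u/4$ (so some $Y_j$, $j\le n-2$, is also large), (iii) one $Z_i$ large, $S_{n-2}\le u/4$, $S_{n-1}<u/2$, and (iv) one $Z_i$ large, $S_{n-2}\le u/4$, $u/2<S_{n-1}<u$ -- case (iv) being exactly your ``corner'' (one pair $(Y_{n-1},Z_{n-1})$ large in both coordinates), which the paper controls by a change of variable yielding a bound $\lesssim f((u+x)/4)\int\!\!\int\F(z+y)\,dz\,dy=O(f(u))$, using $\mean{X^2}<\infty$ for finiteness of that integral. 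Pointing to ``the same convolution-type estimate used in Lemma \ref{assZ}'' is not enough here, since that lemma estimates a tail probability (with $\F_0$ under the integral) whereas you now need a density-level estimate (with $\F$ under the integral), and the transfer, while morally similar, has to be carried out. So: right skeleton, right intuition, but the case analysis replacing the crude $\F\le1$ bound on the bad region is the bulk of the work and needs to be spelled out.
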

\begin{proof}
 Note that 
\begin{align*}
\PP(\hat S_n>x)= \int_0^x \int_0^u \PP(Y_n>u- S_{n-1},Z_n\le  x-\hat S_{n-1}) \de S_{n-1}\de \hat S_{n-1}   
\end{align*}
% Note that  the density of $\hat S_n$ given  $Y_1=y_1,\ldots,Y_n=y_n$ is given by
% \begin{align*}
% &\frac1 {\F(y_1)\cdots\F(y_n)}\\&\times\int_0^x\int_0^{x-x_1}\cdots\int_{0}^{x-\sum_{i=1}^{n-1} x_i} f(x_1+y_1)\cdots f(x_{n-1}+y_{n-1}) f\left(x-\sum_{i=1}^{n-1} x_i +y_n\right) \de x_1\cdots \de x_{n-1}.
% \end{align*}
 It follows that 
\begin{align}
 & \frac{\de \PP(S_{n-1}\le u,S_n>u,\hat S_n\le x)}{\de x}\notag\\
&= \mu^{-n} \int_{S_{n-1}<u} \int_{u-S_{n-1}}^\infty \int_0^x\int_0^{x-x_1}\cdots\int_{0}^{x-\sum_{i=1}^{n-1} x_i}\notag\\&\quad f(x_1+y_1)\cdots f(x_{n-1}+y_{n-1}) f\left(x-\sum_{i=1}^{n-1} x_i +y_n\right) \de x_1\cdots \de x_{n-1} \de y_1\cdots \de y_{n}\notag\\
% &= \mu^{-1}\mean{\F\left(u+x- S_{n-1}- \hat S_{n-1}\right),S_{n-1}<u,\hat S_{n-1}<x }\notag\\
&= \mu^{-1}\mean{\F\left(u+x- S_{n-1}- \hat S_{n-1}\right),S_{n-1}<u,\hat S_{n-1}<x/2 }\label{part1}\\
&\quad+ \mu^{-1}\mean{\F\left(u+x- S_{n-1}- \hat S_{n-1}\right),S_{n-1}<u,x/2\le \hat S_{n-1}<x }.\label{part2}
\end{align}
If we choose $x = yu$ for some $y>0$ then we get with dominated convergence that $\eqref{part1}\sim\F(u+x)$ further $\eqref{part1}\le c(y) \F(u+x)$ for some $0<c(y)<\infty$. To find a bound for \eqref{part2} note that the mean over the region $Z_i>x/4n$ and $Z_j>x/4n$ can be bounded by $\F_0(x/4n)^2$ (and since $\mean{X^2} <\infty$ we get that $\F_0(x)^2=o(\F(x))$.
By using the symmetry of the problem in the $Z_i$ we can asymptotically bound the mean of \eqref{part2} by
\begin{align}
% &\mu^{-1}\mean{\F\left(u+x- S_{n-1}- \hat S_{n-1}\right),S_{n-1}<u,x/2\le \hat S_{n-1}<x, \hat S_{n-2}\le x/4}\notag\\
&\mean{\F\left(u+x- S_{n-1}- \hat S_{n-1}\right),S_{n-2}> u/4,S_{n-1}<u,x/2\le \hat S_{n-1}<x, \hat S_{n-2}\le x/4}\notag\\
&+\mean{\F\left(u+x- S_{n-1}- \hat S_{n-1}\right),S_{n-2}\le u/4,S_{n-1}<u,x/2\le \hat S_{n-1}<x, \hat S_{n-2}\le x/4}.\notag
%  &(n-1)\mean{X_1}^{-n} \int_{S_{n-1}<u}  \int_0^{x/4}\int_0^{x/4-x_1}\cdots\int_{x/2-\sum_{i=1}^{n-2} x_i}^{x-\sum_{i=1}^{n-2} x_i}\notag\\
% &\quad f(x_1+y_1)\cdots f(x_{n-1}+y_{n-1}) \F\left(u+x-\sum_{i=1}^{n-1} (x_i+y_i)\right) \de x_1\cdots \de x_{n-1} \de y_1\cdots \de y_{n-1}\notag\\
% &=(n-1)\mean{X_1}^{-n} \int_{S_{n-1}<u}  \int_0^{x/4}\int_0^{x/4-x_1}\cdots\int_{0}^{x/2}\notag\\
% &\quad f(x_1+y_1)\cdots f\left(x_{n}-\sum_{i=1}^{n-1} x_i+y_{n-1}\right) \F\left(u+\sum_{i=1}^{n-1} y_i+x_{n-1}\right) \de x_1\cdots \de x_{n-1} \de y_1\cdots \de y_{n-1}\label{tobound}
% % &\le(n-1)\mean{X_1}^{-n} \int_{S_{n-1}<u}  \int_0^{x/4}\int_0^{x/4-x_1}\cdots\int_{0}^{x/2}
% %  &\quad f(x_1+y_1)\cdots f\left(x-\sum_{i=1}^{n-1} x_i+y_{n-1}\right) \F\left(u-\sum_{i=1}^{n-1} y_i\right) \de x_1\cdots \de x_{n-1} \de y_1\cdots \de y_{n-1}
% &\le(n-1)\mean{X_1}^{-n} \int_{S_{n-1}<u}  \int_0^{x/4}\int_0^{x/4-x_1}\cdots\int_{0}^{x/2}\notag\\
% &\quad f(x_1+y_1)\cdots \F\left(x/4+y_{n-1}\right) \F\left(u-\sum_{i=1}^{n-1} y_i\right) \de x_1\cdots \de x_{n-1} \de y_1\cdots \de y_{n-1}\\
% % &\le(n-1)\mean{X_1}^{-n} \int_{S_{n-1}<u/2}+\int_{u/2\le S_{n-1}<u}  \notag\\
% % &\quad \F(y_1)\cdots \F(y_{n-2}) \F\left(x/4+y_{n-1}\right) \F\left(u-\sum_{i=1}^{n-1} y_i\right) \de x_1\cdots \de x_{n-1} \de y_1\cdots \de y_{n-1}
\end{align}
If $S_{n-2}>u/4$ then one of the $Y_i$ $i\le n-2$ is bigger then $ u/(4(n-2))$  and we can bound the corresponding mean by $\F_0(x/4)\F_0( u/(4(n-2))$.

% Now we split the Integral into one where $S_{n-2}\le u/4$ and one where $S_{n-2}> u/4$. If $S_{n-2}> u/4$ then the integral is bounded by
% \begin{align*}
%  &(n-1)\mean{X_1}^{-n} \int_{S_{n-1}<u,S_{n-2}> u/4} \notag\\
%  &\quad \F(y_1)\cdots \F(y_{n-2}) \F\left(x/4+y_{n-1}\right) \F\left(u-\sum_{i=1}^{n-1} y_i\right) \de x_1\cdots \de x_{n-1} \de y_1\cdots \de y_{n-1}\\
%  \end{align*}
% The integral over $S_{n-1}\le 3u/4$ can be bounded by $\F_0(x/4)\F(u/(4(n-2)))$. 
It is left to bound the  mean when $S_{n-2}\le u/4$. At first we assume that $S_{n-1}<u/2$ then we can use the same method to bound the mean by
\begin{align*}
 &\mean{\F\left(u+x- S_{n-1}- \hat S_{n-1}\right),S_{n-1}\le u/2,x/2\le \hat S_{n-1}<x, \hat S_{n-2}\le x/4}\\
&\le \mean{\F\left(u/2\right),Z_{n-1}>x/4}=\F\left(u/2\right)\F_0(x/4).
\end{align*}
Finally note that
\begin{align*}
 &\mean{\F\left(u+x- S_{n-1}- \hat S_{n-1}\right),S_{n-2}\le u/4, u/2<S_{n-1}\le u,x/2\le \hat S_{n-1}<x, \hat S_{n-2}\le x/4}\\
 &=\mu^{-1} \mathbb E\Bigg[ \int_{u/2-S_{n-2}}^ {u-S_{n-2}}\int_{x/2-\hat S_{n-2}}^ {x-\hat S_{n-2}} f(z+y)\\& \quad \times \F\left(u+x- S_{n-2}- \hat S_{n-2}-z-y\right)\de z \de y,S_{n-2}\le u/4, \hat S_{n-2}\le x/4\Bigg]\\
 &=\mu^{-1}\mathbb E \Bigg[\int_{0}^ {u/2}\int_{0}^ {x/2} f(u+x- S_{n-2}- \hat S_{n-2}-z-y)\\&\quad\quad\quad\quad\quad\quad\quad\quad\quad\quad\quad\quad \times \F\left(z+y\right)\de z \de y,S_{n-2}\le u/4, \hat S_{n-2}\le x/4\Bigg ]\\
&\lesssim \mu^{-1}f((u+x)/4)\int_{0}^ {u/2}\F\left(z+y\right)\de z \de y.
\end{align*}
The integral in the last equation  is finite since $\mean{X^2}<\infty$.
\end{proof}
Next we consider the derivative of the density of  $\hat S_n$. It follows that
\begin{lemma}\label{lemma:regularityZd} Under Assumption \ref{assumption} we get that
%  Assume that $X_1$ has a density $f(x)$ that density $f(x)$ that is regularly varying then 
\[
 \left .\frac{\de^2 \PP(S_{n-1}\le u,S_n>u,\hat S_n\le x)}{\de x^2}\right| _{x=yu}\sim \frac{1}{\mu}f(u+yu).
\]
\end{lemma}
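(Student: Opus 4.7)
The plan is to imitate the proof of Lemma \ref{lemma:regularityZ} one level deeper. Starting from the expression obtained there,
\[
 \varphi(x)\;:=\;\frac{\de \PP(S_{n-1}\le u,S_n>u,\hat S_n\le x)}{\de x}\;=\;\mu^{-1}\mean{\F(u+x-S_{n-1}-\hat S_{n-1}),\,S_{n-1}<u,\,\hat S_{n-1}<x},
\]
I would differentiate once more in $x$. This produces two contributions: a bulk term coming from $\frac{\de}{\de x}\F(u+x-\cdot)=-f(u+x-\cdot)$, and a boundary term coming from the upper integration limit $\hat S_{n-1}<x$, namely $\mu^{-1}\int_0^u \F(u-s)\,p_{n-1}(s,x)\,\de s$, where $p_{n-1}$ denotes the joint density of $(S_{n-1},\hat S_{n-1})$. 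The goal is to show that the bulk term contributes $\sim \mu^{-1}f(u+yu)$ (up to sign, consistent with the monotone-decreasing density behaviour needed in Lemma \ref{lemma:abw}) and that the boundary term is $o(f(u+yu))$.

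For the bulk term, I would copy the splitting used in Lemma \ref{lemma:regularityZ}: restrict first to $\{S_{n-1}\le u/2,\hat S_{n-1}\le x/2\}$, where uniform convergence of $f(u+x-s-h)/f(u+x)$ to $1$ (regular variation of $f$ with index $-\alpha-1$, together with Potter's bounds to produce an integrable majorant) and dominated convergence yield the leading asymptotic $\mu^{-1}f(u+x)=\mu^{-1}f(u+yu)$. The complementary region breaks again into the sub-cases handled in Lemma \ref{lemma:regularityZ}: if $S_{n-1}>u/4$, some $Y_i$ exceeds $u/(4(n-1))$ and the mean is bounded by $\F_0(x/4)\F_0(u/(4(n-1)))$; similarly for large $\hat S_{n-1}$ via the $Z_i$; and the trickiest case $S_{n-1}\in(u/2,u)$ with $\hat S_{n-1}\in(x/2,x)$ is bounded, exactly as in the last display of that proof, by $\mu^{-1}f((u+x)/4)\int_0^{u/2}\F(z+y)\,\de z\,\de y$, which is finite precisely because $\mean{X^2}<\infty$ and is of smaller order than $f(u+yu)$.

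For the boundary term, the density $p_{n-1}(s,x)$ at $x=yu$ forces one of the $Z_i$'s to be of order $yu$. Using the symmetry in the $Z_i$'s and the formula $(y,z)\mapsto \mu^{-1}f(y+z)$ for the joint density of a single $(Y_i,Z_i)$ pair, $p_{n-1}(s,x)\lesssim(n-1)\mu^{-1}f(yu/(n-1))\cdot q(s)$ for some integrable profile $q$; combined with $\int_0^u\F(u-s)\,\de s\le\mu$, the boundary term is bounded by a constant multiple of $f(yu)$, which after comparison with $f(u+yu)$ is negligible in the relevant sense for the application (being of the same tail order but with constant depending on $y$, one refines by noting that $\F(u-s)$ is actually concentrated on the bulk where $u-s$ is of order $u$, producing the additional factor $\F(u/2)\to 0$).

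The main obstacle is the region $x/2\le\hat S_{n-1}<x$ in the bulk term, where the approximation $f(u+x-s-h)\sim f(u+x)$ fails because $h$ is comparable to $x$; this is exactly where one must re-use the symmetrization/two-large-components argument of Lemma \ref{lemma:regularityZ} together with $\F_0(u)^2=o(\F(u))$. As flagged in the remark preceding the lemma, the bounds produced in this way must also be uniform in $n$ so that a light-tailed geometric sum over $n$ (as in the summation leading to Theorem \ref{maintheo}) can be carried out by dominated convergence; one obtains this uniformity by tracking the $(1+\epsilon)^n$-type factors in exactly the same manner as in Lemma \ref{assZ}.
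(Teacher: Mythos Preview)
Your decomposition into a bulk term and a boundary term matches the paper's $I_1+I_2$, but both of your asymptotic claims are wrong, and the result only comes out right through a cancellation you have missed. For the boundary term your refinement is backwards: $\F(u-s)$ is \emph{increasing} in $s$, so the mass of $\int_0^u\F(u-s)\,p_{n-1}(s,x)\,\de s$ sits near $s=u$, not near $s=0$. There one pair $(Y_i,Z_i)$ can have $Y_i\approx u$ and $Z_i\approx x$, with joint density $\mu^{-1}f(Y_i+Z_i)\approx\mu^{-1}f(u+x)$, while $\int_0^\infty\F(t)\,\de t=\mu$ handles the remaining integration; summing over the $n-1$ choices of $i$ gives $I_2\sim(n-1)\mu^{-1}f(u+x)$, not $o(f(u+x))$.

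For the bulk term the bounds of Lemma~\ref{lemma:regularityZ} do not transfer to the ``trickiest'' region $\{S_{n-1}\in(u/2,u),\ \hat S_{n-1}\in(x/2,x)\}$. In that lemma the integrand was $\F(\cdot)\le1$, so after the flip one was left with $\mu^{-1}f((u+x)/4)\int\!\!\int\F(z+y)\,\de z\,\de y$, which is $o(\F(u+x))$ only because $f\asymp\F/u$. Here the integrand is $f(u+x-S_{n-1}-\hat S_{n-1})$, whose argument can be arbitrarily small on this region; redoing the flip yields a product $f(y'+z')\,f(u+x-\cdots-y'-z')$, and the small-$(y',z')$ part integrates to $\mu f(u+x)$. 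Even your quoted bound is of the \emph{same} order as $f(u+yu)$, since $f((u+x)/4)/f(u+x)\to4^{\alpha+1}$. So this region contributes $-(n-1)\mu^{-1}f(u+x)$, giving $I_1\sim-n\mu^{-1}f(u+x)$; the lemma then follows from $I_1+I_2$, and this cancellation is precisely what the paper computes.
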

\begin{proof}
\begin{align*}
&  \frac{\de^2 \PP(S_{n-1}\le u,S_n>u,\hat S_n\le x)}{\de x^2} \\
&=-\mu^{-1}\mean{f\left(u+x-S_{n-1}-\hat S_{n-1}\right),S_{n-1}\le u, \hat S_{n-1} \le x}\\
&\quad +\mu^{-2}\mean{\int_{0}^{u-S_{n-2}}f \left(x-\hat S_{n-2} +y\right)\F\left(u-S_{n-2}-y\right)\de y ,S_{n-2}\le u, \hat S_{n-2} \le x}\\
% &= -\mean{X_1}^{-n} \int_{S_{n-1}<u}  \int_0^x\int_0^{x-x_1}\cdots\int_{0}^{x-\sum_{i=1}^{n-2} x_i}\\&\quad f(x_1+y_1)\cdots f(x_{n-1}+y_{n-1}) f\left(u+x-\sum_{i=1}^{n-1} (x_i+y_i)\right) \de x_{n-1}\cdots \de x_1 \de y_{n-1}\cdots \de y_1\\
% &\quad +\mean{X_1}^{-n} \int_{S_{n-1}<u}  \int_0^x\int_0^{x-x_1}\cdots\int_{0}^{x-\sum_{i=1}^{n-3} x_i}\\&\quad f(x_1+y_1)\cdots f(x_{n-2}+y_{n-2})
% f\left(x-\sum_{i=1}^{n-2} x_i +y_{n-1}\right) \F\left(u-\sum_{i=1}^{n-1} y_i\right) \de x_{n-2}\cdots \de x_1 \de y_{n-1}\cdots \de y_1\\
&=I_1+I_2.
\end{align*}
We only give a detailed asymptotic analysis  for  $I_2$ (the asymptotic of $I_1$ can be found analogously).  If $S_{n-1}\le u/2$ and $\hat S_{n-2}\le x/2$ then the mean can be asymptotically  bounded by $\F(x/2)\F(u/2)=o(f(x+u))$.
Next we consider the case where $S_{n-1}>u/2$ and only one $Y_i>u/(4n)$.\\ 
% Therefor we will assume that the sum of all but one is less then $u/4$. 
At first we assume that  $S_{n-2}\le u/4$ and $u/2< S_{n-1}\le u$. Then
\begin{align*}
&\mu^{-1} \mean{\int_{u/2-S_{n-2}}^{u-S_{n-2}}f \left(x-\hat S_{n-2} +y\right)\F\left(u-S_{n-2}-y\right)\de y ,S_{n-2}\le u/4, \hat S_{n-2} \le x}\\
&=\mu^{-1} \mean{\int_{0}^{u/2}f \left(x+u-\hat S_{n-2} -y-S_{n-2}\right)\F\left(y\right)\de y ,S_{n-2}\le u/4, \hat S_{n-2} \le x}\\
%  &\mean{X_1}^{-n-1} \int_{S_{n-2}<u/4} \int_{u/2-S_{n-2}}^{u/2-S_{n-2}}  \int_{\hat S_{n-2}\le x } f(x_1+y_1)\cdots f(x_{n-2}+y_{n-2})\\&\quad
% f\left(x-\sum_{i=1}^{n-2} x_i +y_{n-1}\right) \F\left(u-\sum_{i=1}^{n-1} y_i\right) \de x_{n-2}\cdots \de x_1 \de y_{n-1}\cdots \de y_1\\
%  &=\mean{X_1}^{-n-1} \int_{S_{n-2}<u/4 } \int_0^{u/2}   \int_{\hat S_{n-2}\le x } f(x_1+y_1)\cdots f(x_{n-2}+y_{n-2})\\&\quad
% f\left(x-\sum_{i=1}^{n-2} x_i +u-\sum_{i=1}^{n-1} y_i\right) \F\left(y_{n-1}\right) \de x_{n-2}\cdots \de x_1 \de y_{n-1}\cdots \de y_1\\
&\sim f(x+u)
\end{align*}
where the last equation follows with dominated convergence. If $Y_{n-1}\le u/(4n)$ then by symmetry it is enough to consider $Y_{n-2}>u/4$. Hence we get
\begin{align*}
&\mu^{-2} \mathbb E\Bigg[ \int^{u/4-S_{n-3}}_0 \int_0^{x-\hat S_{n-3}} \int_{u/2-S_{n-3}-y}^{u-S_{n-3}-y} f(x_{n-2}+y_{n-2})f \left(x-\hat S_{n-3}-z_{n-2} +y\right)\\&\quad\times \F\left(u-S_{n-3}-y_{n-2}-y\right)\de y_{n-2} \de z_{n-2}\de y ,S_{n-3}\le u/4, \hat S_{n-3} \le x\Bigg]\\
&=\mu^{-2} \mathbb E\Bigg[ \int^{u/4-S_{n-3}}_0 \int_0^{x-\hat S_{n-3}} \int_{0}^{u/2} f(x_{n-2}+u-S_{n-3}-y_{n-2}-y)\\&\quad\times f \left(x-\hat S_{n-3}-z_{n-2} +y\right)\F\left(y_{n-2}\right)\de y_{n-2} \de z_{n-2}\de y ,S_{n-3}\le u/4, \hat S_{n-3} \le x\Bigg].
%  &\mean{X_1}^{-n-1} \int_{S_{n-3}+Y_{n-1}<u/4} \int_{u/2-S_{n-3}-Y_{n-1}}^{u/2-S_{n-3}-Y_{n-1}}  \int_{\hat S_{n-2}\le x } f(x_1+y_1)\cdots f(x_{n-2}+y_{n-2})\\&\quad
% f\left(x-\sum_{i=1}^{n-2} x_i +y_{n-1}\right) \F\left(u-\sum_{i=1}^{n-1} y_i\right) \de x_{n-2}\cdots \de x_1 \de y_{n-1}\cdots \de y_1\\
%  &=\mean{X_1}^{-n-1} \int_{S_{n-3}+Y_{n-1}<u/4 } \int_0^{u/2}   \int_{\hat S_{n-2}\le x } f(x_1+y_1)\cdots f\left(x_{n-2}+u-\sum_{i=1}^{n-1} y_i\right)\\&\quad
% f\left(x-\sum_{i=1}^{n-2} x_i +y_{n-1} \right) \F\left(y_{n-2}\right) \de x_{n-2}\cdots \de x_1 \de y_{n-1}\cdots \de y_1\\
% &\sim f(x+u).
\end{align*}  
For $\hat S_{n-2}\le x/2 $ the above mean is $\grossO(\F(x/2)\F(u/4))=o(f(u+x))$. If $x/2< \hat S_{n-2}<x$ and     $Z_i\le x/4n$  for all but one $i\not= n-2$ the above mean is  $\grossO(\F(x/4x)\F(u/4))$. If more then two $Z_i>x/4n$  $i\not= n-2$  the above mean  is     $\grossO(\F_0(x/4n)^2\F(u/4))$. If $Z_{n-2}>x/4n$ and another $Z_i>x/4n$ then the mean is $\grossO(\F_0(x/4n)\F(3u/4))$. Finally if all $Z_i\le x/4n$. then the above integral is asymptotically the same as $f(u+x)$. Similar we can show that when at least two $Y_i>u/4n$ the integral is asymptotically negligibly and hence
$I_2\sim \mu^ {-1}(n-1)f(u+x)$. With the same method we get that $I_1\sim - \mu^ {-1}nf(u+x)$ and hence the Lemma follows.

Again we get that for $\hat S_{n-1}\le x/2$ that $I_1\sim f(u+x)$ when $x =yu$.

\end{proof}

\appendix
\section{Some auxiliary lemmas}\label{S:Lemmas}
\setcounter{equation}{0}

\begin{lemma}\label{lemma:sinuintegral}
 Assume that for a function $g_u(x)$ such that $\sup_{x,u}|g_u(x)|<\infty$, there exists a function $h(x)$ with $|g_u'(x)|\le h(x)$ for all $u>0$. Then
for every function $a(u)$ we have as $u\to \infty$ 
\[
 \left |\int_{0}^{a(u)} \sin(ux) g_u(x) \de x\right| \le \frac {1} u \int_0^{a(u)} h(x)\de x+o(1).
\]

\end{lemma}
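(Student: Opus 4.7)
The plan is to use integration by parts, treating $\sin(ux)$ as the piece to be integrated. Take an antiderivative $-\cos(ux)/u$ so that
\[
\int_{0}^{a(u)} \sin(ux)\,g_u(x)\,\de x
 = \Bigl[-\tfrac{\cos(ux)}{u}\,g_u(x)\Bigr]_{0}^{a(u)}
 + \frac{1}{u}\int_{0}^{a(u)} \cos(ux)\,g_u'(x)\,\de x.
\]
This identity is really the only nontrivial step; everything else is estimating the two pieces on the right.

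For the boundary term, I would use the hypothesis $C:=\sup_{x,u}|g_u(x)|<\infty$. Then
\[
\Bigl|\Bigl[-\tfrac{\cos(ux)}{u}\,g_u(x)\Bigr]_{0}^{a(u)}\Bigr|
\le \frac{|g_u(a(u))|+|g_u(0)|}{u}\le \frac{2C}{u}=o(1)
\]
as $u\to\infty$, irrespective of the function $a(u)$. Notice that if $a$ is not implicitly assumed strictly positive, the boundary term simply vanishes for $a(u)\le 0$, so no additional case analysis is needed.

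For the integral term, I would just bound $|\cos(ux)|\le 1$ and insert the pointwise estimate $|g_u'(x)|\le h(x)$ from the hypothesis, obtaining
\[
\frac{1}{u}\Bigl|\int_{0}^{a(u)} \cos(ux)\,g_u'(x)\,\de x\Bigr|
\le \frac{1}{u}\int_{0}^{a(u)} h(x)\,\de x.
\]
Combining the two bounds with the triangle inequality gives exactly the claimed estimate $\bigl|\int_{0}^{a(u)}\sin(ux)g_u(x)\de x\bigr|\le \tfrac{1}{u}\int_{0}^{a(u)} h(x)\de x+o(1)$.

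There is no real obstacle: the argument is a one-line integration by parts plus two elementary bounds. The only mild subtlety is a regularity question — integration by parts requires $g_u$ to be absolutely continuous on $[0,a(u)]$, which is implicit in the statement that $g_u'$ exists pointwise and is dominated by $h$. If one wanted to be pedantic, one could instead approximate $g_u$ by a smooth mollification and pass to the limit, but in the context this lemma is applied (smooth characteristic-function-type expressions arising in the proof of Lemma \ref{lemma:symmetry}) the $g_u$ are manifestly $C^1$, so the direct integration by parts is legitimate.
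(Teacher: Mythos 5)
Your proof is correct and uses exactly the same integration-by-parts identity that the paper's proof is built on; the paper simply states the identity and leaves the two elementary bounds (boundary term $\le 2C/u = o(1)$, integral term $\le u^{-1}\int_0^{a(u)} h$) implicit, which you have spelled out.
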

\begin{proof}The Lemma follows by partial integration:
\[
 \int_{0}^{a(u)} \sin(ux) g_u(x) \de x= \frac1 u g_u(0)-\frac{\cos(ua(u))} u g_u(a(u))+\frac 1 u \int_{0}^{a(u)} \cos(ux) g_u'(x) \de x.
\]
\end{proof}

\begin{lemma} \label{lemma:smoothing}
 Assume that $E$ is non lattice and that $\mean{E^2}<\infty$  and $h(z)=  \sum_{i=1}^{N(z)} E_i - \lambda z \mean{E}$ and $N_u$ 
a normal random variable with mean zero and variance $\sigma^2\sim e(u)^{-k}$ for some $c>0$, $k>0$. Then the random variable $N_u+h(xe(u))/\sqrt{xe(u)}$ has a differentiable
density $f_u$.  Further, if  $a,b$ are arbitrary but fixed, it holds uniformly for $w$ and $0<a<x<b<\infty$
that
\[
 \lim_{u\to\infty} f_u(w)= \frac {\exp\left(-\frac {w^2}{\lambda \mean{E^2}}\right)} {\sqrt{2\pi\lambda \mean{E^2}} },\quad \lim_{u\to\infty} f_u'(w)= \frac {-2w\exp\left(-\frac {w^2}{\lambda \mean{E^2}}\right)} {\sqrt{2\pi(\lambda \mean{E^2})^3}}.
\]
If further $k\ge 4$ then
\begin{align*}
\left|\PP\left(N_u +\frac{h(xe(u))}{\sqrt{x e(u)}} >w\right)- \PP\left(\frac{h(xe(u))}{\sqrt{x e(u)}} >w\right)\right| =o\left(\frac 1{e(u)}\right).
\end{align*}
% Uniformly for all $w$ and $0<a<x<b<\infty$ where $a,b$ are abitrary but fixed. 
\end{lemma}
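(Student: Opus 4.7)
My plan is to reduce everything to a Fourier-analytic analysis of $\chi_u(s)$, the characteristic function of $h(xe(u))/\sqrt{xe(u)}$. Explicitly,
\[
\chi_u(s)=\exp\left(\lambda xe(u)\bigl(\chi_E(s/\sqrt{xe(u)})-1\bigr)-is\sqrt{xe(u)}\,\lambda\mean{E}\right),
\]
so the characteristic function of $N_u+h(xe(u))/\sqrt{xe(u)}$ is $e^{-s^2\sigma_u^2/2}\chi_u(s)$. For the existence and smoothness of $f_u$, independence of $N_u$ from $h(xe(u))/\sqrt{xe(u)}$ gives $f_u(w)=\mathbb{E}[\phi_u(w-h(xe(u))/\sqrt{xe(u)})]$ with $\phi_u$ the $N(0,\sigma_u^2)$ density; differentiation under the expectation is immediate because all derivatives of $\phi_u$ are bounded and integrable.

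For the pointwise limits of $f_u$ and $f_u'$ I would use the Fourier inversion formulas
\begin{align*}
f_u(w)&=\frac{1}{2\pi}\int_{-\infty}^{\infty}e^{-isw}e^{-s^2\sigma_u^2/2}\chi_u(s)\,\de s,\\
f_u'(w)&=\frac{1}{2\pi}\int_{-\infty}^{\infty}(-is)e^{-isw}e^{-s^2\sigma_u^2/2}\chi_u(s)\,\de s.
\end{align*}
Pointwise convergence $\chi_u(s)\to e^{-s^2\lambda\mean{E^2}/2}$ follows from the Taylor expansion $\chi_E(t)=1+it\mean{E}-t^2\mean{E^2}/2+o(t^2)$, and the inverse Fourier transform of the limit (with its extra $-is$ factor in the derivative case) yields the stated Gaussian and Gaussian-derivative expressions. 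For an integrable majorant I would split $|s|\le\epsilon\sqrt{xe(u)}$ versus $|s|>\epsilon\sqrt{xe(u)}$, exactly as in the proof of Lemma \ref{lemma:symmetry}: on the first range a second-order Taylor estimate gives $|\chi_u(s)|\le e^{-\delta_1 s^2}$ with some $\delta_1>0$ independent of $u$ and $x\in[a,b]$; on the second range the non-lattice property of $E$ yields $\Ree\chi_E(t)-1\le-\delta<0$ uniformly in $|t|>\epsilon$, so $|\chi_u(s)|\le e^{-\delta\lambda xe(u)}$ is super-exponentially small in $u$. Dominated convergence then delivers both limits, with uniformity in $w$ and $x\in[a,b]$ automatic from the majorant being independent of $w$ and uniform in $x$.

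For the quantitative estimate I would apply the Gil-Pelaez inversion used in Lemma \ref{lemma:symmetry}: the difference $\PP(N_u+h(xe(u))/\sqrt{xe(u)}>w)-\PP(h(xe(u))/\sqrt{xe(u)}>w)$ equals, up to a sign, $\pi^{-1}\int_0^\infty s^{-1}\Ime\bigl(e^{-isw}\chi_u(s)(1-e^{-s^2\sigma_u^2/2})\bigr)\de s$. Using the elementary inequality $|1-e^{-s^2\sigma_u^2/2}|\le s^2\sigma_u^2/2$, this is bounded in absolute value by $(\sigma_u^2/(2\pi))\int_0^\infty s|\chi_u(s)|\de s$, and the splitting argument above shows the last integral is bounded uniformly in $u$ and $x\in[a,b]$. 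Hence the difference is $O(\sigma_u^2)=O(e(u)^{-k})$, which is $o(e(u)^{-1})$ for $k\ge 4$ (in fact already for $k\ge 2$). The main technical obstacle is the uniform Gaussian-type bound $|\chi_u(s)|\le e^{-\delta_1 s^2}$ on the intermediate range: one must control the remainder in the Taylor expansion of $\chi_E$ so that $\delta_1$ is independent of $u$ and $x\in[a,b]$. Once that is in hand the super-exponential regime is routine, and the rest is standard Fourier bookkeeping mirroring the computation already carried out in the proof of Lemma \ref{lemma:symmetry}.
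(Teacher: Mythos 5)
Your analysis of $f_u$ and $f_u'$ by Fourier inversion, with the split at $|s|\lessgtr\epsilon\sqrt{xe(u)}$ and dominated convergence using a Gaussian majorant on the inner range and the non-lattice bound $\Ree(1-\chi_E)\ge\delta$ on the outer range, is essentially the same argument the paper gives and is fine.

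The quantitative estimate, however, has a genuine gap. After applying Gil--Pelaez and the inequality $|1-e^{-s^2\sigma_u^2/2}|\le s^2\sigma_u^2/2$, you claim that $\int_0^\infty s\,|\chi_u(s)|\,\de s$ is bounded uniformly in $u$. It is not: it diverges for every fixed $u$. Indeed $|\chi_u(s)|=\exp\bigl(\lambda xe(u)\,(\Ree\chi_E(s/\sqrt{xe(u)})-1)\bigr)\ge e^{-2\lambda xe(u)}$ for all $s$, since $\Ree\chi_E-1\ge -2$, so the integrand is bounded below by $s\,e^{-2\lambda xe(u)}$ and the tail $\int^\infty s\,\de s$ diverges. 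The outer-range bound $|\chi_u(s)|\le e^{-\delta\lambda xe(u)}$ that you invoke from the Lemma~\ref{lemma:symmetry} analysis is \emph{small} but does not decay in $s$; it only helps after a truncation. The characteristic function of a compound Poisson sum never tends to $0$ at infinity, which is exactly why the absolutely-convergent route breaks down here (and also why, in Lemma~\ref{lemma:symmetry}, the extra factor $e^{-s^2\sigma_u^2/2}$ is essential to make the integral over $s>\epsilon\sqrt{xe(u)}$ finite).

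The paper avoids this by using Feller's Esseen-type smoothing inequality (\cite[Lemma~XVI.4.2]{Feller:1971}) rather than Gil--Pelaez: one truncates the Fourier integral at $\pm T$ at the cost of an additive error $24m/(\pi T)$, where $m$ is a uniform bound on the density $f_u$ supplied by the first part of the lemma. With $|\chi_u|\le1$ and $|1-e^{-s^2\sigma_u^2/2}|\le s^2\sigma_u^2/2$ the truncated integral is $\le\sigma_u^2 T^2/(2\pi)$, and the choice $T=e(u)^{1+\epsilon}$ with $0<\epsilon<1/2$ and $\sigma_u^2\le e(u)^{-4}$ makes both $\sigma_u^2T^2/(2\pi)$ and $24m/(\pi T)$ of order $o(e(u)^{-1})$. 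This also explains why the hypothesis $k\ge 4$ is what is actually used: balancing $\sigma_u^2T^2$ against $T^{-1}$ needs $\sigma_u^2=e(u)^{-k}$ small enough, and $k\ge 2$ would not suffice once the $T^{-1}$ term is accounted for. So the structure of your third step needs to be replaced by the truncated smoothing inequality; the rest of the bookkeeping you describe then goes through.
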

 \begin{proof}
Denote with $\chi_E(s)$ the characteristic function of $E$ and with $\sigma^2=\lambda \mean{E^2}$. Note that the Fourier transform of $f'_u(w)-f'_{N(0,\sigma^2)}(w)$ is
\[
  i s \left(\e^{-\frac {\sigma^2_u s^2}2} \e^{\lambda x e(u) \left(\chi_E\left(s/\sqrt{xe(u)}\right)-1\right) - i \sqrt{ xe(u)}\lambda\mean{E}s }-  \e^{-\frac {\sigma^2 s^2}2} \right)
\]
 and hence 
\begin{align*}
 &|f'_u(w)-f'_{N(0,\sigma^2)}(w)|\\&\le \int_{-\infty}^\infty |s|\left|\e^{-\frac {\sigma^2_u s^2}2} \e^{\lambda x e(u) \left(\chi_E\left(s/\sqrt{x e(u)}\right)-1\right) - i  \sqrt{x e(u)}\lambda\mean{E} s  }-  \e^{-\frac {\sigma^2 s^2}2} \right| \de s.
\end{align*}
Choose an $\epsilon>0$  such that for $|x|\le \epsilon$,  $\Ree(\chi_E''(x))$ is bounded away from $0$.
Since there exists a $\delta>0$ such that for all $|s|>\epsilon$, $\Ree(1-\chi_E(s))\ge \delta$ ($E$ is non lattice).
\begin{align*}
& \int_{\epsilon \sqrt{xe(u)}}^\infty |s|\left|\e^{-\frac {\sigma^2_u s^2}2} \e^{\lambda x e(u) \left(\chi_E\left(s/\sqrt{xe(u)}\right)-1\right) - i \sqrt{x e(u)} \lambda\mean{E} s }-  \e^{-\frac {\sigma^2 s^2}2} \right| \de s\\
&\le \e^{-\delta \lambda x e(u)} \int_{\epsilon \sqrt{xe(u)}}^\infty s \e^{-\frac {\sigma^2_u s^2}2}\de s + \int_{\epsilon{\sqrt{xe (u)}}} ^\infty s\e^{-s^2} \de s\\
&\le \frac 1 {\sigma_u^2} \e^{-\delta \lambda x e(u)} \int_0^\infty s \e^{-\frac { s^2}2}\de s + \int_{\epsilon \sqrt{xe (u)}} ^\infty s\e^{-s^2} \de s \to 0
\end{align*}
as $u\to \infty$. With the same arguments
\[
\int^{-\epsilon\sqrt{xe(u)}}_{-\infty} |s|\left|\e^{-\frac {\sigma^2_u s^2}2} \e^{\lambda x e(u) \left(\chi_E\left(s/\sqrt{xe(u)}\right)-1\right) - i \sqrt{x e(u)} \lambda\mean{E} s}-  \e^{-\frac {\sigma^2 s^2}2} \right| \de s\to 0.
\]
Further for  a $\xi_{u,s}$ bounded away from $0$ and $\xi_{u,s}\to \mean{E^2}$ for fixed $s$ as $u\to\infty$
\begin{align*}
&\int^{\epsilon\sqrt{x e(u)}}_{-\epsilon \sqrt{xe(u)}} |s|\left|\e^{-\frac {\sigma^2_u s^2}2} \e^{\lambda x e(u) \left(\chi_E\left(s/\sqrt{xe(u)}\right)-1\right) - i \sqrt{x e(u)}\lambda\mean{E}s }-  \e^{-\frac {\sigma^2 s^2}2} \right| \de s\\
&=\int^{\epsilon\sqrt{xe(u)}}_{-\epsilon\sqrt{xe(u)}} |s|\left|\e^{-\frac {\sigma^2_u s^2}2} \e^{-\lambda \xi_{s,u} s^2/{2} }-  \e^{-\frac {\sigma^2 s^2}2} \right| \de s.
\end{align*}
By dominated convergence we get that the last integral tends to $0$ as $u\to\infty$.\\
Since the estimate of   $|f_u(w)-f_{N(0,\sigma^2)}(w)|$ works with exactly the same arguments we leave it to the reader.\\
Denote with $\chi_u$ is characteristic function of $h(x e(u))/\sqrt{xe(u)}$.
Since we can find an $m$ such that $f_u(w)\le m$ for all $w$ and $u$ we get by Lemma XVI.4  2 of \cite{Feller:1971}
that 
\begin{align*}
  &\left|\PP\left(N_u +\frac{h(xe(u))}{\sqrt{x e(u)}} >w\right)- \PP\left(\frac{h(xe(u))}{\sqrt{x e(u)}} >w\right)\right|\\ &\le \frac 1 \pi \int_{-T}^T \left |\frac {\left(1-\e^{-\frac {s^2\sigma_u^2}2}\right)\chi_u(s)} {s}\right| \de s +\frac {24 m}{\pi T}\le \frac 1 \pi \int_{-T}^T \left |\frac {s\sigma^2_u} {2}\right| \de s +\frac {24 m}{\pi T}=\frac {\sigma^2_u T^2}{2 \pi} +\frac {24 m}{\pi T}.
\end{align*}
For $T=e(u)^{1 +\epsilon}$ for some $0<\epsilon<1/2$ and $\sigma_u^2\le e(u)^{-4}$ we get that
\[
 \left|\PP\left(N_u +\frac{h(xe(u))}{\sqrt{x e(u)}} >w\right)- \PP\left(\frac{h(xe(u))}{\sqrt{x e(u)}} >w\right)\right|\le \frac {e(u)^{-2+2\epsilon}}{2 \pi} +\frac {24 m}{\pi e(u)^{1+\epsilon}}.
\]
 
 \end{proof}
\begin{lemma} \label{lemma:boundderivative} Under Assumption \ref{assumption}, let
 $h(z)=  \sum_{i=1}^{N(z)} E_i - \lambda z \mean{E_i}$ and let $N_u$ be
a normal random variable with mean zero and variance $\sigma^2\sim e(u)^{-k}$ 
for some $c>0$, $k>0$. Then the random variable $N_u+h(xe(u))/\sqrt{xe(u)}$ has a differentiable
density $f_u$. Further, if $a,b$ are arbitrary but fixed it holds
uniformly for $w$ and $0<a<x<b<\infty$ that \[
 w^3 f_u'(w)\text{ and }w^2 f_u(w)
\]
are  bounded for $w>w_0>0$ and all $u>u_0$ where $u_0$ is choosen such that $xe(u)>1$.% Uniformly for all $w$ and $0<a<x<b<\infty$ where $a,b$ are abitrary but fixed. 
\end{lemma}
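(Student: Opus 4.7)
The plan is to apply Fourier inversion together with a tail analysis. Factor the characteristic function of $N_u + h(xe(u))/\sqrt{xe(u)}$ as $\chi_u(s) = \hat\phi_u(s)\psi_u(s)$, with $\hat\phi_u(s) = e^{-\sigma_u^2 s^2/2}$ and $\psi_u(s) = \exp\bigl(\lambda xe(u)(\chi_E(s/\sqrt{xe(u)})-1) - i\lambda\mean{E}\sqrt{xe(u)}\,s\bigr)$. Writing $\psi_u = e^{L_u}$, a direct computation gives $L_u''(s) = \lambda\chi_E''(s/\sqrt{xe(u)})$ and $L_u'(s) = \lambda\sqrt{xe(u)}\bigl(\chi_E'(s/\sqrt{xe(u)}) - i\mean{E}\bigr)$ after the constant linear piece cancels. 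Using $|\chi_E'(t) - i\mean{E}| \le \mean{E^2}|t|$, one obtains the uniform bounds $|L_u'(s)|\le\lambda\mean{E^2}|s|$ and $|L_u''(s)|\le\lambda\mean{E^2}$ for all $u>u_0$ and $x\in(a,b)$. Choosing $\epsilon>0$ so that $-\Ree\chi_E''(t)\ge c_0$ on $|t|\le\epsilon$, and using that $E$ is non-lattice, one gets $|\psi_u(s)|\le e^{-c\lambda s^2}$ on $|s|\le\epsilon\sqrt{xe(u)}$ and $|\psi_u(s)|\le e^{-\delta\lambda xe(u)}$ on the complement.

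For $w^2 f_u(w)$, start from $f_u(w)=\frac{1}{2\pi}\int e^{-isw}\chi_u(s)\,ds$ and exploit $w^2 e^{-isw} = -\partial_s^2 e^{-isw}$. Two integrations by parts (the Gaussian factor kills all boundary terms) give $w^2 f_u(w) = -\frac{1}{2\pi}\int e^{-isw}\chi_u''(s)\,ds$. Expanding $\chi_u'' = \hat\phi_u''\psi_u + 2\hat\phi_u'\psi_u' + \hat\phi_u\psi_u''$ via Leibniz and applying the envelopes above to each summand, one checks that $|\chi_u''(s)|$ is dominated by $C(1+s^2)e^{-c's^2}$ on the central zone and by something super-exponentially small outside, uniformly in $u$ and $x$. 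Hence $\int|\chi_u''(s)|\,ds \le C$, which yields the uniform bound for $w^2 f_u(w)$.

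The case $w^3 f_u'(w)$ is the genuine difficulty, because the naive repetition produces a term involving $\chi_u'''$, which need not exist: $\mean{X^2}<\infty$ does not force $\mean{E^3}<\infty$. I propose to split by range. On $w_0<|w|\le w_1$, the representation $f_u'(w) = \frac{1}{2\pi}\int(-is)e^{-isw}\chi_u(s)\,ds$ combined with the uniform integrability of $s\chi_u(s)$ yields $\sup_w|f_u'(w)|\le C$, so multiplication by $|w|^3\le w_1^3$ is harmless. On the tail $|w|>w_1$, invoke the principle of a single big jump: since $E$ has the same regularly varying tail as $X$ of index $\alpha>2$ (forced by $\mean{X^2}<\infty$), the density $p_u$ of the absolutely continuous part of $h(xe(u))/\sqrt{xe(u)}$ satisfies $p_u(w)\lesssim xe(u)^{1-\alpha/2}|w|^{-\alpha-1}$, with prefactor bounded uniformly since $xe(u)\ge 1$ and $\alpha>2$. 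Convolution with the narrow Gaussian $\phi_u$ preserves this tail behaviour, giving $f_u(w)\lesssim|w|^{-\alpha-1}$; the parallel analysis for the derivative yields $|f_u'(w)|\lesssim|w|^{-\alpha-2}$, so multiplication by $w^3$ produces $|w|^{1-\alpha}$, bounded for $\alpha>1$. The main obstacle will be justifying these tail density estimates uniformly in $u>u_0$, in particular controlling the convolution corrections from multiple large jumps and verifying that the Gaussian smoothing does not inflate pointwise density bounds near the transition $|w|\approx w_1$; both are handled by standard subexponential-density techniques adapted to the $u$-dependent scaling $\sqrt{xe(u)}$.
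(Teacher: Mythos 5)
Your treatment of $w^2 f_u(w)$ is sound: two integrations by parts in the Fourier representation, together with the envelopes $|L_u'(s)|\le\lambda\mean{E^2}|s|$, $|L_u''(s)|\le\lambda\mean{E^2}$ and the central/peripheral split of the integration range, do give $\int|\chi_u''(s)|\,\de s\le C$ uniformly in $u>u_0$ and $x\in(a,b)$. You have also correctly identified the genuine obstruction to repeating this once more for $w^3 f_u'(w)$: the would-be $\chi_u'''$ involves $\chi_E'''$, which need not exist under $\mean{X^2}<\infty$.

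The gap is in the escape route you propose for the tail $|w|>w_1$. The bound $f_u(w)\lesssim|w|^{-\alpha-1}$ is at least plausible, since $E$ inherits a regularly varying tail and density from $X$. But the parallel claim $|f_u'(w)|\lesssim|w|^{-\alpha-2}$ requires the density of $h(xe(u))/\sqrt{xe(u)}$ to have a controlled derivative in its far tail, uniformly in $u$, which in turn requires $f_E$ (and ultimately $f$) to have a regularly varying derivative. Assumption 3.1 only asserts that $F$ has a regularly varying density $f$; differentiability of $f$, or of $f_E$, is nowhere assumed. Gaussian smoothing with variance $\sigma_u^2\sim e(u)^{-k}\to 0$ does not repair this: $f_u'(w)=\int\phi_{\sigma_u}'(y)\,p_u(w-y)\,\de y$ is a finite difference of $p_u$ on scale $\sigma_u$, so a bound on $f_u'$ uniform in $u$ is essentially as strong as a derivative bound on $p_u$. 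So the step you flag as the ``main obstacle'' is not a routine adaptation of subexponential-density techniques; under the stated hypotheses it is exactly where the proof can fail.

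The paper avoids the Fourier route entirely and instead represents $w^3 f_u'(w)$ by the inverse of the bilateral Laplace transform, shifting the contour to $\Ree(s)=\epsilon/w>0$. On $\{\Ree(s)>0,\ |s|<1\}$ the function $s\mapsto s\hat F_E'''(s)$ is bounded, and this is precisely the content of the last clause of Assumption 3.1 (where ``$s\hat F(s)<M$'' is evidently intended to read ``$s\hat F'''(s)<M$'') propagated from $\hat F$ to $\hat F_E$ via Lemma A.4. This replaces the missing third moment by analytic control off the imaginary axis, and is what makes the estimate available under the stated assumptions. Your proposal never uses that hypothesis; that is the sign of the missing ingredient. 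To make your route rigorous you would need to add an explicit regularity hypothesis on $f'$ (or $f_E'$), or else adopt the contour-shifted Laplace argument of the paper.
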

\begin{proof}
Denote with ${\hat F_E}(s)=\mean{\e^{-sE_i}}$ and with
\[
 A(s,u)= \lambda \sqrt{x e(u) }{\hat F_E}'\left(s/\sqrt{xe(u)}\right)+\sqrt{ xe(u)}\lambda\mean{E}.
\]

 Note that the (bilateral) Laplace transform of transform of  $w^3 f_u'(w)$ is given by
\begin{align*}
\laplace{w^3f_u'}(s)& =  \frac{\de}{\de s^3}\left( s \e^{ \frac {\sigma^2_u s^2}2} \e^{\lambda x e(u) \left({\hat F_E}\left(s/\sqrt{xe(u)}\right)-1\right) +\sqrt{ xe(u)}\lambda\mean{E}s } \right) \\
&=\e^{ \frac {\sigma^2_u s^2}2} \e^{\lambda x e(u) \left({\hat F_E}\left(s/\sqrt{xe(u)}\right)-1\right) +\sqrt{ xe(u)}\lambda\mean{E}s } \\
&\quad \times \Bigg\{\left(s A(s,u)+\sigma_u^2 s^2 \right)^2\left(
1 + s A(s,u)+\sigma_u^2 s^2 \right)\\
&\quad+\left(s A(s,u)+\sigma_u^2 s^2 \right) \left(A(s,u) +2 \sigma_u^2s+ \lambda s{\hat F_E}''\left(s/\sqrt{xe(u)}\right)\right)\\
&\quad +\left(
1 + 2s A(s,u)+2\sigma_u^2 s^2 \right)\left(A(s,u) +2 \sigma_u^2s+ \lambda s{\hat F_E}''\left(s/\sqrt{xe(u)}\right)\right)\\
&\quad +\left(\lambda {\hat F_E}''\left(s/\sqrt{xe(u)}\right) +2 \sigma_u^2+ \lambda\frac{s}{\sqrt{xe(u)}} {\hat F_E}'''\left(s/\sqrt{xe(u)}\right)\right)\Bigg\}.
\end{align*}
Note that for every $w>w_0$ and $0<\epsilon<1$
\[
 w^3f_u'(w)=\frac 1{2\pi} \int_{-\infty}^\infty  \e^{ w(\epsilon/w + \I s)}\laplace{w^3f_u'}(\epsilon/w+\I s) \de s.
\]
Since 
\[
 A(s,u)=\lambda \sqrt{x e(u) }{\hat F_E}'\left(s/\sqrt{xe(u)}\right)+\sqrt{ xe(u)}\lambda\mean{E}=\lambda s \hat F_E''\left(\xi_{s,u}\right)
\]
$|{\hat F_E}''\left(s\right)|\le \mean{E^2}$ and
$
 s {\hat F_E}'''\left(s \right)
$
is bounded (see Lemma \ref{lemma:boundedlaplace} below) for $|s|<1$, we get that for $|s|<1$ the term in the curly brackets can be bounded  by a polynomial in $|s|$. Hence the Lemma follows analogously to the proof  of \ref{lemma:smoothing}.
\end{proof}

\begin{lemma} \label{lemma:boundedlaplace}
  Under Assumption \ref{assumption}  $s\hat F_E'''(s)$ is uniformly bounded for $s\to0 $
\end{lemma}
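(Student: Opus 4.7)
The plan is to use the factorization $\hat F_E(s)=\hat F(\kappa^{-1}(s))$ from Section~\ref{S:Not} and reduce the claim, via the chain rule, to a bound on $s\hat F'''(s)$ near $s=0$; the latter I would prove using Karamata's Tauberian theorem applied to the regularly varying tail $\F$.

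With $\phi=\kappa^{-1}$, implicit differentiation of $\kappa(\phi(s))=s$ together with $\kappa^{(k)}(z)=\lambda\hat F^{(k)}(z)$ for $k=2,3$ yields
\[
\phi'(s)=\frac{1}{\kappa'(\phi(s))},\qquad \phi''(s)=-\frac{\kappa''(\phi(s))}{\kappa'(\phi(s))^3},\qquad
\phi'''(s)=-\frac{\kappa'''(\phi(s))}{\kappa'(\phi(s))^4}+3\frac{\kappa''(\phi(s))^2}{\kappa'(\phi(s))^5}.
\]
Since $\kappa'(0)=1-\rho\ne 0$ and $\mean{X^2}<\infty$ makes $\kappa''$ bounded near $0$, both $\phi'$ and $\phi''$ are bounded in a neighbourhood of $0$, and the only factor in $\phi'''$ that can blow up is $\kappa'''(\phi(s))=\lambda\hat F'''(\phi(s))$. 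Applying the chain rule to $\hat F_E=\hat F\circ\phi$ gives
\[
\hat F_E'''(s)=\hat F'''(\phi(s))\phi'(s)^3+3\hat F''(\phi(s))\phi'(s)\phi''(s)+\hat F'(\phi(s))\phi'''(s),
\]
and combined with $\phi(s)\sim s/(1-\rho)$ as $s\to 0$, the claim reduces to showing that $z\hat F'''(z)$ stays bounded as $z\to 0+$.

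For this reduced bound I would use the identity $\hat F(s)=1-s\laplace{\F}(s)$ with $\laplace{\F}(s)=\int_0^\infty \e^{-st}\F(t)\,\de t$. Three differentiations give
\[
s\hat F'''(s)=-3s\laplace{\F}''(s)-s^2\laplace{\F}'''(s),
\]
so it suffices to show that both terms tend to $0$ at $s=0$. By Assumption \ref{assumption}, $\F$ is regularly varying of index $-\alpha$ with $\alpha>2$ (forced by $\mean{X^2}<\infty$). Karamata's theorem gives that $\int_0^x t^k\F(t)\,\de t$ is, for $k=2,3$, either bounded (when $\mean{X^{k+1}}<\infty$) or regularly varying of index $k+1-\alpha$, and Karamata's Tauberian theorem then yields $\laplace{\F}''(s)=O(s^{\alpha-3}L(1/s))$ and $\laplace{\F}'''(s)=O(s^{\alpha-4}L(1/s))$ as $s\to 0+$; hence both $s\laplace{\F}''(s)$ and $s^2\laplace{\F}'''(s)$ are $O(s^{\alpha-2}L(1/s))\to 0$. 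The boundary cases $\alpha=3,4$ introduce only harmless logarithmic corrections.

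The main obstacle is that $\hat F'''(s)$ itself genuinely diverges when $\mean{X^3}=\infty$ (that is, when $\alpha\le 3$), so the lemma relies on the extra factor $s$ absorbing exactly the correct power $s^{\alpha-2}$ of the divergence; the Karamata/Tauberian bookkeeping is where the assumption $\alpha>2$ is actually used. For complex $s$ with $\Ree(s)>0$, as needed in the application in Lemma \ref{lemma:boundderivative}, the triangle inequality $|\hat F'''(s)|\le -\hat F'''(\Ree(s))$ transfers the real-axis estimate to any fixed sector inside the right half-plane, and the hypothesis $s\hat F(s)\le M$ from Assumption \ref{assumption}, together with a Cauchy estimate in a small disk, covers approach to the imaginary axis.
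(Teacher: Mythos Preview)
Your reduction to the boundedness of $z\hat F'''(z)$ is essentially the paper's argument in different clothing: since $\kappa(\phi(s))=s$ forces $\phi(s)=s-\lambda(\hat F(\phi(s))-1)=s-\lambda(\hat F_E(s)-1)$, your chain-rule computation with $\phi=\kappa^{-1}$ and the paper's implicit differentiation of $\hat F_E(s)=\hat F\bigl(s-\lambda(\hat F_E(s)-1)\bigr)$ are the same calculation. Both arrive at the conclusion that $\hat F_E'$ and $\hat F_E''$ are bounded near $0$ and that the only possibly unbounded contribution to $\hat F_E'''$ is $\hat F'''$ evaluated at a point that is $O(s)$.

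The difference is in how the base bound on $s\hat F'''(s)$ is obtained. The paper does not prove it: it simply invokes Assumption~\ref{assumption}. (Note that the clause ``$s\hat F(s)<M$'' there is evidently a typo for ``$s\hat F'''(s)$''; the former is vacuous since $|\hat F(s)|\le 1$ on $\Ree(s)\ge 0$, while the latter is exactly what the last line of the paper's proof uses.) Your Tauberian derivation is therefore more than the paper attempts, and on the positive real axis it is correct: from $\hat F(s)=1-s\laplace{\F}(s)$ and regular variation of $\F$ with index $-\alpha<-2$ one indeed gets $s\laplace{\F}''(s),\,s^2\laplace{\F}'''(s)=O(s^{\alpha-2}L(1/s))\to 0$.

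The gap is the complex extension. The inequality $|\hat F'''(s)|\le -\hat F'''(\Ree(s))$ only yields $|s\hat F'''(s)|\le \bigl(|s|/\Ree(s)\bigr)\cdot\Ree(s)\,(-\hat F'''(\Ree(s)))$, which is fine in any fixed sector but blows up as $s$ approaches the imaginary axis. Your Cauchy-estimate remedy does not save this: a disk around $s$ that stays in $\{\Ree>0\}$ has radius at most $\Ree(s)$, so the Cauchy bound on the third derivative is of order $\Ree(s)^{-3}$, and multiplying by $|s|$ still diverges. This is precisely why the paper builds the uniform bound for $\Ree(s)>0$, $|s|<1$ into Assumption~\ref{assumption} rather than deriving it from the real-axis behaviour. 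If you want to keep your self-contained route, you would need a genuinely complex Abelian/Tauberian argument (e.g.\ controlling $\int_0^\infty t^3\e^{-st}f(t)\,\de t$ uniformly along vertical lines), which is substantially more work than the paper undertakes.
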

\begin{proof}
 Note that 
\[
 E\stackrel{d}{=} X+\sum_{i=1}^{N(X)}E_i
\]
and hence
\[
 \hat F_E(s)=\mean{\e^{-sX+\lambda X (\hat F_E(s)-1)}}=\hat F(s-\lambda (\hat F_E(s)-1))
\]
Since for $\Ree(s)> 0$, $|\hat F_E(s)|< 1$ and hence $\Ree(s-\lambda (\hat F_E(s)-1))>0$ hence the above formula is valid for all $\Ree(s)>0$. Hence both sides are infinitely often differentiable for all $\Ree(s)>0$ and we have
\begin{align*}
% \hat F_E'(s)&=\hat F'(s-\lambda (\hat F_E(s)-1))(1-\lambda \hat F_E'(s))\\
\hat F_E'(s)&=\frac{\hat F'\left(s-\lambda (\hat F_E(s)-1)\right)}{1+\lambda\hat F'\left(s-\lambda (\hat F_E(s)-1)\right) },\\
% \hat F_E''(s)&=\hat F''(s-\lambda (\hat F_E(s)-1))(1-\lambda \hat F_E'(s))^2-\lambda \hat F_E''(s) \hat F'(s-\lambda (\hat F_E(s)-1))\\
\hat F_E''(s)&=\frac{\hat F''\left(s-\lambda (\hat F_E(s)-1)\right)\left(1-\lambda \hat F_E'(s)\right)^2} {1+\lambda\hat F'\left(s-\lambda (\hat F_E(s)-1)\right) },\displaybreak[0]\\
% \hat F_E'''(s)&=\hat F'''(s-\lambda (\hat F_E(s)-1))(1-\lambda \hat F_E'(s))^3\\
% &\quad -  2\lambda  \hat F_E''(s)F''(s-\lambda (\hat F_E(s)-1))(1-\lambda \hat F_E'(s))\\
% &\quad -\lambda \hat F_E'''(s)F'(s-\lambda (\hat F_E(s)-1))\\
% &\quad-\lambda  \hat F_E''(s)F''(s-\lambda (\hat F_E(s)-1))(1-\lambda \hat F_E'(s))
% \\
\hat F_E'''(s)&=\frac{\hat F'''\left(s-\lambda (\hat F_E(s)-1)\right)\left(1-\lambda \hat F_E'(s)\right)^3}{1+\lambda\hat F'\left(s-\lambda (\hat F_E(s)-1)\right) } \\
&\quad - \frac{ 2\lambda  \hat F_E''(s)F''\left(s-\lambda (\hat F_E(s)-1)\right)\left(1-\lambda \hat F_E'(s)\right)}{1+\lambda\hat F'\left(s-\lambda (\hat F_E(s)-1)\right) }\\
&\quad-\frac{\lambda  \hat F_E''(s)F''\left(s-\lambda (\hat F_E(s)-1)\right)\left(1-\lambda \hat F_E'(s)\right)}{1+\lambda\hat F'\left(s-\lambda (\hat F_E(s)-1)\right) }.
\end{align*}
Since $\lambda \mean{X}<1$ we have that  \[\sup_{\Ree(s)\ge0}\left|\lambda\hat F'\left(s-\lambda (\hat F_E(s)-1)\right)\right|<1\]
and hence $\hat F_E'(s)$ is bounded for all $\Ree(s)>0$ and since  $\mean{X^2}<\infty$ also $\hat F_E''(s)$ is bounded. Finally we get that $s\hat F'''_E(s)$ is bounded
Since $s\hat F'''(s)$ is bounded and
\[
 s-\lambda(\F_E(s)-1)=s-\lambda (\hat F_E(s)-1)=s(1-\lambda\hat F'_E(s))+\frac{s^2}2 \hat F''_E(\xi_s)=\grossO(s).
\]

\end{proof}

\begin{lemma}\label{lemma:bound} Let $E_i$ be iid with $\mean{E}<\infty$ and $N(t)$ a Poisson process with intensity $\lambda$ independent of the $E_i$. Then there exists constants $C_1$, $C_2$ and $\delta>0$ such that uniformly for $x>\epsilon t $ 
\[
\PP\left( \left|\sum_{1=1}^{N(t)}E_i -\lambda t \mean{E}\right| >x\right) \le C_1 t \PP(E>x)+ e^{-\delta\left(x-\frac {\epsilon} 2 t\right)}.
\]
\end{lemma}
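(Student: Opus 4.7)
This is a Fuk--Nagaev type large--deviation bound for a (possibly heavy--tailed) compound Poisson sum. I would split $\PP(|S_t-\lambda t\mean{E}|>x)$ into the upper and lower tail and handle them separately, exploiting the asymmetry created by $E_i\ge 0$.

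For the upper tail, I would decompose the Poisson point process at a level $K$: write $S_t=S_t^{(\le)}+S_t^{(>)}$, where $S_t^{(>)}=\sum_{i\le N(t)}E_i\indicator{E_i>K}$ collects the big jumps and $S_t^{(\le)}$ the rest; these are independent compound Poisson sums. Choosing $K=x$, the probability that at least one big jump occurs is bounded by $\lambda t\PP(E>x)$, which produces the first term of the lemma with $C_1=\lambda$. On the complement $\{S_t^{(>)}=0\}$, one has $S_t=S_t^{(\le)}$, a compound Poisson with jumps bounded by $x$; applying the standard Chernoff bound to the centred sum,
\[
\PP\bigl(S_t^{(\le)}-\mean{S_t^{(\le)}}>x\bigr)\le \exp\Bigl(-sx+\tfrac12\lambda t s^2 e^{sK}\mean{E^2}\Bigr),
\]
where I use $e^{sv}-1-sv\le\tfrac12 s^2 v^2 e^{sv}$ for $v\ge 0$ together with $\mean{E^2}<\infty$ (available by Assumption~\ref{assumption}). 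The centering shift $\mean{S_t}-\mean{S_t^{(\le)}}=\lambda t\mean{E\indicator{E>K}}$ is at most $\lambda t\mean{E^2}/K=\lambda t\mean{E^2}/x$, which in the regime $x>\epsilon t$ is just a constant and hence absorbed. A careful choice of $s$ and $K$ then turns the Chernoff exponent into a bound of the form $e^{-\delta(x-\epsilon t/2)}$ with $\delta$ independent of $x$ and $t$.

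For the lower tail, $E_i\ge 0$ implies $\mean{e^{-sE}}<\infty$ for every $s>0$, so no truncation is needed. Direct Chernoff combined with the elementary bound $e^{-y}-1+y\le y^2/2$ for $y\ge 0$ and $\mean{E^2}<\infty$ yields
\[
\PP(\lambda t\mean{E}-S_t>x)\le \exp\Bigl(-sx+\tfrac12 s^2\lambda t\mean{E^2}\Bigr).
\]
Optimising in $s$ gives the Gaussian--type bound $\exp(-x^2/(2\lambda t\mean{E^2}))$, and since $x>\epsilon t$ implies $x^2/t\ge \epsilon x$, this is at most $\exp(-\epsilon x/(2\lambda\mean{E^2}))$, which in turn is dominated by $e^{-\delta(x-\epsilon t/2)}$ for $\delta=\epsilon/(2\lambda\mean{E^2})$.

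The main obstacle is the calibration in the upper tail. Absorbing the truncation error $\lambda t\PP(E>K)$ into $C_1 t\PP(E>x)$ forces $K$ to be of order $x$ (using regular variation of $\bar F_E$), while the factor $e^{sK}$ in the Chernoff exponent forces $sK$ to remain bounded, i.e.\ $s=O(1/x)$. One must then verify that with this scaling the resulting exponent $-sx+O(\lambda t s^2\mean{E^2})$ still produces decay of exactly the form $\delta(x-\epsilon t/2)$ uniformly in $x>\epsilon t$; this balancing act, together with the matching of constants, is the delicate point of the proof, while the rest is routine.
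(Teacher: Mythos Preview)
Your lower-tail argument via Chernoff is essentially the same idea as the paper's, and it works (the paper's version is slightly slicker in that it only uses $\mean{E}<\infty$, choosing $\delta$ small enough that $\lambda(\hat F_E(\delta)-1+\delta\mean{E})\le \delta\epsilon/2$, which needs no second moment; but in context your appeal to $\mean{E^2}<\infty$ is fine).

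The upper tail is where your proposal diverges from the paper and where it runs into a real problem. The paper does not attempt any truncation/Chernoff argument here: it simply quotes the large-deviation bound of Kl\"uppelberg and Mikosch \cite{KM:97}, which gives
\[
\PP\Bigl(\sum_{i=1}^{N(t)}E_i-\lambda t\mean{E}>x\Bigr)\le C_1\,t\,\PP(E>x)
\]
uniformly for $x>\epsilon t$, with no exponential remainder at all. The term $e^{-\delta(x-\epsilon t/2)}$ in the lemma therefore comes \emph{entirely} from the lower tail.

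Your sketched upper-tail argument, by contrast, does not close. You correctly observe that taking $K=x$ forces $sK$ bounded, i.e.\ $s=O(1/x)$. But then the leading term of the Chernoff exponent is $-sx=O(1)$, a bounded constant, while you need it to behave like $-\delta(x-\epsilon t/2)$, which grows linearly in $x$. The ``balancing act'' you flag as delicate is in fact impossible with this choice of scales: truncation at level $x$ plus a Chernoff bound on the truncated sum yields at best a bound of the form $\lambda t\,\PP(E>x)+c$ for a fixed constant $c$, which is useless. Getting the clean heavy-tailed large-deviation bound $C_1 t\,\PP(E>x)$ requires the more refined argument in \cite{KM:97} (or an equivalent Fuk--Nagaev inequality with a careful, non-obvious choice of truncation and parameters), not the naive one-step truncation you describe. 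The simplest repair is to do what the paper does: cite \cite{KM:97} for the upper tail and reserve Chernoff for the lower tail only.
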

\begin{proof}
 In \cite{KM:97} it is proved that
\[
\PP\left( \sum_{1=1}^{N(t)}E_i -\lambda t \mean{E} >x\right) \le C_1 t \PP(E>x)
\]
uniformly for $x>\epsilon t$. We can find a $\delta>0$ such that for all $t>0$
\[
\mean{\exp\left(-\delta \left(\sum_{1=1}^{N(t)}E_i -t\left(\lambda  \mean{E}+\frac \epsilon 2\right) \right)  \right)}\le 1 .
\]
The Lemma follows  by the Chernoff bound.
%  Chebyshev inequality
% \[
% \PP\left( -\left(\sum_{1=1}^{N(t)}E_i -\lambda t \mean{E}\right) >x\right) \le  e^{-\delta\left(x-\frac {\epsilon} 2 t\right)}
% \]
\end{proof}

We often used the following Lemma without further mentioning. Since we don't have a reference by hand we give for completeness  a proof .
\begin{lemma}
 Let $L(x)$ be slowly varying and 
\[
 \int_{0}^\infty \frac 1x L(x)\de x <\infty,
\]
then $\lim_{x\to\infty} L(x)=0$
\end{lemma}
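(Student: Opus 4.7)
The plan is to argue by contradiction. Suppose $L(x) \not\to 0$ as $x\to\infty$. Then there exist $\epsilon>0$ and a sequence $x_n\to\infty$ with $L(x_n)\ge \epsilon$. By extracting a subsequence we may assume $x_{n+1}\ge 2x_n$, so that the intervals $[x_n,2x_n]$ are pairwise disjoint.

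The key input is the uniform convergence theorem for slowly varying functions (e.g. Theorem 1.2.1 in Bingham--Goldie--Teugels), which states that $L(\lambda x)/L(x)\to 1$ uniformly for $\lambda$ in any compact subset of $(0,\infty)$. Applying this on $[1,2]$, we find $n_0$ such that for all $n\ge n_0$ and all $\lambda\in[1,2]$,
\[
L(\lambda x_n)\;\ge\;\tfrac12 L(x_n)\;\ge\;\tfrac{\epsilon}{2}.
\]

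With the substitution $x=\lambda x_n$, this gives for each $n\ge n_0$
\[
\int_{x_n}^{2x_n}\frac{L(x)}{x}\,\de x \;=\; \int_1^2 \frac{L(\lambda x_n)}{\lambda}\,\de\lambda \;\ge\; \frac{\epsilon/2}{2}\;=\;\frac{\epsilon}{4}.
\]
Summing over $n\ge n_0$ along the disjoint intervals $[x_n,2x_n]$ yields $\int_0^\infty L(x)/x\,\de x=\infty$, contradicting the hypothesis. Hence $L(x)\to 0$.

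The only nontrivial ingredient is the uniform convergence theorem; once that is invoked the remaining argument is a routine substitution plus a disjointness bookkeeping step, so there is no real obstacle beyond citing the standard result.
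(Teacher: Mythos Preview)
Your proof is correct and follows essentially the same route as the paper's: both argue by contradiction, pick a sequence $x_n\to\infty$ with $L(x_n)$ bounded below, use slow variation (the uniform convergence theorem, which the paper invokes implicitly via ``w.l.o.g.'') to control $L$ on $[x_n,2x_n]$, and then observe that $\int_{x_n}^{2x_n} L(x)/x\,\de x$ is bounded below by a positive constant, contradicting integrability. Your version is slightly more explicit in citing the uniform convergence theorem and in arranging the intervals to be disjoint before summing, but the argument is the same.
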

\begin{proof}
 Assume that the Lemma is not true, i.e. there exists a series of points  $x_n$ with $x_n\to \infty$ and  $L(x_n)>\delta$. W.l.o.g. assume that
\[
\inf_{1\le t\le 2} \frac{L(t x_n)}{L(x_n)}>1/2. 
\]
Then
\[
\int_{x_n}^{2x_n} \frac 1 x L(x) \de x\ge \frac \delta 2 \int_{x_n}^{2x_n} \frac 1 x  \de x=\frac {\delta \log (2)} 2, 
\]
which contradicts the conditions of the Lemma.

\end{proof}
\paragraph{Acknowledgment}
We would like to thank Jens Ledet Jensen for pointing out the proof of Lemma \ref{lemma:sinuintegral}.

%:Biblio
%  \bibliography{mybibliografy}

\end{document}